\documentclass[11pt,letterpaper]{article}
\usepackage[utf8]{inputenc}
\usepackage{amsmath}
\usepackage{amssymb}
\usepackage{amsthm}
\usepackage[english]{babel}
\usepackage{booktabs}
\usepackage{graphicx}
\usepackage[colorlinks,allcolors=blue]{hyperref}
\usepackage{pgfplots}
\pgfplotsset{compat=1.14}
\usepackage{xcolor}
\usepackage[margin=1in]{geometry}
\usepackage{caption}
\usepackage{subcaption}
\usepackage{nicefrac} % To write in a better way the exponential fractions
\usepackage[affil-it]{authblk}
\usepackage{stmaryrd}
\usepackage{mathtools}
\usepackage[ruled,linesnumbered]{algorithm2e}
\usepackage{bm}
\usepackage{array}
\usepackage{empheq}

\usepackage{marginnote} % margin notes for revised version

\usepackage{tabularx}

\usepackage[shortlabels]{enumitem}

\numberwithin{equation}{section}    %The number of the equations change according to the section

\renewcommand{\arraystretch}{1.2}   % To modify the heigh of tables' rows

\usepackage{xpatch} % Name of theorems are bold
\makeatletter
   \xpatchcmd{\@thm}{\fontseries\mddefault\upshape}{}{}{} % same font as thm-header
\makeatother

\usepackage{float}

\let\oldnl\nl
\newcommand{\nonl}{\renewcommand{\nl}{\let\nl\oldnl}}

\usepackage[normalem]{ulem}

\usepackage{newtxtext}
\usepackage{newtxmath}

\tikzstyle{arrow} = [thick,->,>=stealth]

\definecolor{cadmiumgreen}{rgb}{0.0, 0.42, 0.24}
\definecolor{bole}{rgb}{0.47, 0.27, 0.23}
\definecolor{cyan(process)}{rgb}{0.0, 0.72, 0.92}

\pgfarrowsdeclare{mylatex'}{mylatex'}
{
	\newdimen\len
	\len=\pgfgetarrowoptions{mylatex'}
	\pgfarrowsleftextend{-0.4\len}
	\pgfarrowsrightextend{0.6\len}
}
{
	\newdimen\len
	\len=\pgfgetarrowoptions{mylatex'}
	\pgfpathmoveto{\pgfqpoint{0.6\len}{0\len}}
	\pgfpathcurveto
	{\pgfqpoint{0.35\len}{0.05\len}}
	{\pgfqpoint{-0.1\len}{0.15\len}}
	{\pgfqpoint{-0.4\len}{0.375\len}}
	\pgfpathcurveto
	{\pgfqpoint{-0.15\len}{0.1\len}}
	{\pgfqpoint{-0.15\len}{-0.1\len}}
	{\pgfqpoint{-0.4\len}{-0.375\len}}
	\pgfpathcurveto
	{\pgfqpoint{-0.1\len}{-0.15\len}}
	{\pgfqpoint{0.35\len}{-0.05\len}}
	{\pgfqpoint{0.6\len}{0\len}}
	\pgfusepathqfill
}
\pgfsetarrowoptions{mylatex'}{8pt}
\pgfkeys{/tiplen/.default=8pt, /tiplen/.code={\pgfsetarrowoptions{mylatex'}{#1}}}

\newcommand{\el}{\bm{E}} % (vector) electric field
\newcommand{\mg}{\bm{B}} % (vector) magnetic field
\newcommand{\gendom}{U} % general domain (open set)
\newcommand{\permitt}{\varepsilon} % electric permittivity coefficient
\newcommand{\permeab}{\mu} % magnetic permeability coefficient
\newcommand{\indices}[2]{\llbracket #1, #2\rrbracket} % set of integer indices
\newcommand{\QT}[1]{\widehat{\mathbb{QT}}_{#1}} % quasi-Trefftz space
\newcommand{\QTopt}[1]{\mathbb{QT}_{#1}}
\newcommand{\pol}[1]{\mathbb{P}_{#1}} % polynomial space
\newcommand{\hompol}[1]{\tilde{\mathbb{P}}_{#1}} % homogeneous polynomial space
\newcommand{\funel}[1]{\bm{#1}^E} % general vector function connected with the electric field
\newcommand{\funmg}[1]{\bm{#1}^B} % general vector function connected with the magnetic field
\newcommand{\taylortrun}[2]{\bm{\mathcal{T}}_{#1}^{#2}} % (vector) Taylor taylortruncation
\newcommand{\taylortrunsc}[2]{\mathcal{T}_{#1}^{#2}} % (scalar) Taylor taylortruncation
\newcommand{\curl}{\bm{\nabla}\times} % curl operator
\newcommand{\diver}{\bm{\nabla}\cdot} % divergence operator
\newcommand{\grad}{\bm{\nabla}} % gradient operator
\newcommand{\homdivfreeall}[1]{\tilde{\mathbb S}_{#1}} % homogeneous divergence free (solenoidal) polynomial vector fields
\newcommand{\homcurlfreeall}[1]{\tilde{\mathbb I}_{#1}} % homogeneous curl free (irrotational) polynomial vector fields
\newcommand{\homdivfree}[1]{\tilde{\mathbb S}_{#1}^*} % homogeneous divergence free (solenoidal) but not curl free polynomial vector fields
\newcommand{\homcurlfree}[1]{\tilde{\mathbb I}_{#1}^*} % homogeneous curl free (irrotational) but not divergence free polynomial vector fields
\newcommand{\homdivfreebis}[1]{\tilde{\mathbb S}_{#1}^{+}} 
\newcommand{\homcurlfreebis}[1]{\tilde{\mathbb I}_{#1}^{+}}
\newcommand{\homharm}[1]{\tilde{\mathbb H}_{#1}}

\newcommand{\harm}[1]{\mathbb H_{#1}}
\newcommand{\divfree}[1]{{\mathbb S}_{#1}^*}
\newcommand{\curlfree}[1]{{\mathbb I}_{#1}^*}
\newcommand{\airy}{\text{Ai}} % Airy function
\newcommand{\imag}{\mathrm{i}} % roman "i" for imaginaty unit

\newtheorem{theorem}{Theorem}[section]
\newtheorem{proposition}[theorem]{Proposition}
\newtheorem{lemma}[theorem]{Lemma}
\newtheorem{corollary}[theorem]{Corollary}
\newtheorem{assumption}[theorem]{Assumption}
\theoremstyle{remark}
\newtheorem{remark}[theorem]{Remark}
\theoremstyle{definition}
\newtheorem{definition}[theorem]{Definition}

\DeclareCaptionLabelFormat{andtable}{#1~#2  \&  \tablename~\thetable}

\title{Quasi-Trefftz spaces for the first-order time-harmonic Maxwell's equations}

\author[1]{Ilaria Fontana} %\thanks{Corresponding author}}
\author[1]{Lise-Marie Imbert-Gérard}

\affil[1]{Department of Mathematics, University of Arizona, USA}

\date{}

\begin{document}
\maketitle

\tableofcontents

\newpage

\begin{abstract}
    The goal of this article is to study optimal high-order Taylor-based polynomial quasi-Trefftz spaces for first-order time-harmonic Maxwell's equations. 
    Compared with standard polynomial spaces, the purpose of quasi-Trefftz spaces is to retain %\IF{Can we use "retain" instead of enjoy?} 
    the same high-order best approximation property %as standard polynomial spaces 
    but more efficiently: their dimension is considerably smaller.
    To achieve this goal, they approximate only smooth solutions to the governing equation rather than to approximate general smooth functions. %\IF{Is this sentence complete? A final . was missing.}
    While there is a natural definition of Taylor-based polynomial quasi-Trefftz spaces for scalar equations, the situation is fundamentally different for vector-valued equations, as the naive quasi-Trefftz property does not systematically define the smallest space with the desired best approximation property. 
    This article identifies the optimal quasi-Trefftz spaces for first-order time-harmonic Maxwell's equations, proposes two approaches to construct quasi-Trefftz bases, and proves the spaces' best approximation property. The theoretical results, which strongly leverage polynomial Helmholtz decompositions and the exactness of the polynomial de Rham sequences, are validated with numerical experiments.
    %We develop and analyze novel high-order polynomial spaces for the first-order time-harmonic Maxwell system. These locally defined spaces, called \emph{quasi-Trefftz} spaces as they are an extension of Trefftz spaces, consist of local approximate solutions of the system in the sense of Taylor truncation. 
    %% local approximate solutions obtained via Taylor truncation of the governing equations.
    %We prove that these spaces hold the same best approximation properties as standard complete polynomial spaces, but with a significant reduction in their dimension. Two algorithms for the construction of functions and bases of these quasi-Trefftz spaces are proposed, both leveraging the homogeneous underlying structure of polynomial spaces, the Helmholtz decomposition of polynomial vector fields, and the exactness of the polynomial de Rham sequence. We validate the theoretical results with numerical simulations with both constant and variable coefficients.

    %\IF{Should we put this in a single paragraph instead of splitting it into three parts?}
\end{abstract}

\section{Introduction}

Quasi-Trefftz (QT) Discontinuous Galerkin (DG) methods \cite{Imbert-Gerard_2013,IGMS_2023,Gomez_2024, Imbert-Gerard_2025} are numerical methods for Partial Differential Equation (PDE) problems that discretize a weak formulation  with problem-dependent local discrete spaces.
For context, several other methods for numerical PDEs also rely on problem-dependent discrete spaces. This is, for instance, the case of 
Trefftz methods \cite{PhD_Cessenat_1996,doi:10.1137/S0036142995285873,PHD_Moiola_2011,Hiptmair_2016, Hiptmair_2013}, 
embedded Trefftz methods {\cite{Lehrenfeld_2023,lederer2025embeddedtrefftzdgframework,stocker2026embeddedtrefftzdgmethod,gomez2026embeddedtrefftzdgmethod}}, discontinuous Petrov-Galerkin methods {\cite{DemkGopPI,DemkGopPII,DemkGopPIII,DemkGopPIV,demkowicz2025discontinuous,BOOK_Brenner_2008, BOOK_Demkowicz_2023}}, or multiscale methods \cite{GALLISTL20151,Peterseim2014EliminatingTP,10.1007/978-3-319-51954-8_6,Altmann_Henning_Peterseim_2021}.
Two key features of QT-DG methods for scalar problems are the following:
\begin{itemize}
    \item minimal numerical work is required to construct local basis functions, since this can be implemented simply by (forward) substitutions rather than numerical solves;
    \item the QT discrete spaces preserve high-order best approximation properties similar to those of standard polynomial spaces but with a significant reduction in dimension. %TO BE COMPLETED {\sc preserving the high-order best approx properties of standard discontinuous pol spaces with a significant (optimal) reduction in dimension}.
\end{itemize}

%Besides the choice of an adequate DG weak formulation,  QT-DG methods 
As DG methods, QT-DG methods rely on two fundamental pillars: (1) an adequate DG weak formulation and (2) QT discrete spaces satisfying best-approximation properties. 
While Trefftz DG methods leverage spaces of local exact solutions to the governing equation(s), the specificity of QT-DG methods is to leverage spaces of local approximate solutions to the governing equation(s).
For comparison, if the governing PDE(s) can be written as $\mathcal L u =0$, a function $\varphi$ might satisfy a local either Trefftz property or quasi-Trefftz property:
$$
\text{ either }\mathcal L \varphi = 0 \text{ or } \mathcal L \varphi \approx 0.
$$
More precisely, this work relies on a Taylor-based notion of the quasi-Trefftz property: given a point $\bm{x}_0\in \mathbb R^d$, the quasi-Trefftz property states that a certain Taylor polynomial centered at $\bm{x}_0$ of the quantity $\mathcal L \varphi$ is equal to zero.

Given the inherently local aspect of both Taylor expansions and DG discrete spaces, QT-DG discrete spaces are naturally defined as a product of local spaces defined element-wise on a mesh of the computational domain.
%this work focuses on local quasi-Trefftz spaces.
Key properties of QT-DG discrete spaces stem from their local properties on individual mesh elements;
given a mesh element $T$ and its centroid $\bm{x}_T\in T$, discrete spaces on $T$ are defined locally via Taylor expansions centered at $\bm{x}_T$.

%parag on what has been done: scal, recent both pol and GPW, and systm with Lagardere
Two main types of local Taylor-based QT-DG spaces have been studied for problems governed by scalar PDEs. One type relied on a so-called Generalized Plane Wave (GPW) ansatz \cite{Imbert-Gerard_2013,Imbert-Gerard_2024}, with higher-order polynomial terms in either the phase or the amplitude of a classical plane wave; another type simply relied on a polynomial ansatz \cite{Imbert-Gerard_2024,Imbert-Gerard_2025}. Recently \cite{Imbert-Gerard_2025-2,FIG}, an abstract framework has been proposed to unify the study of these quasi-Trefftz spaces, based on the kernel of a certain operator called the quasi-Trefftz operator; this operator is linear in the polynomial case, but not in the GPW case.
In the context of scalar PDEs \cite{Imbert-Gerard_2025-2}, the so-called principal part of the QT operator, referring to the highest-order differential operator terms with frozen coefficients, is always surjective, and this is absolutely fundamental to both the theoretical study and practical aspects of QT spaces and approximation. 
However, the situation is deeply different in the context of PDE systems, as such operators are generally not surjective. In \cite{Imbert-Gerard_2026-pre}, a first work on a PDE system tackled a first-order formulation of the Helmholtz equation in dimension 2. 
Another contribution \cite{Imbert-Gerard_2025-3} addresses the second-order time-harmonic Maxwell's equation.

%system of 6 PDEs (and 2 div conds?)
This work focuses on Taylor-based polynomial QT spaces for the first-order time-harmonic Maxwell's equations \cite{BOOK_Muller_1969, BOOK_Nedelec_2001, BOOK_Monk_2003}:
\begin{equation}\label{eq:first-order Maxwell}
    \begin{cases}
        \curl \el -\imag \kappa \permeab \mg = \bm{0}, %-(\imag k)^{-1} \bm{J} 
        %&\quad \text{in } \Omega
        \\
        \curl \mg + \imag \kappa \permitt \el = \bm{0} ,
        %&\quad \text{in } \Omega,
    \end{cases}
\end{equation}
on some open set $\Omega\subset\mathbb R^3$,
where $\el$ and $\mg$ are the unknown electric and magnetic field,  $\imag$ is the imaginary unit, $\kappa > 0$ is a fixed wavenumber, $\permitt$ is the electric permittivity and $\permeab$ is the magnetic permeability, not necessarily constant, i.e., $ \permitt(\bm{x})$, $ \permeab(\bm{x})$. 
For simplicity, in this work, we only consider the case in which these two variable coefficients are scalar functions. Associated divergence conditions and de Rham complexes are often fundamental in the analysis of PDE problems driven by Maxwell's equations and in the numerical analysis of numerical methods for such problems alike. 

Numerous numerical methods have been developed for both time-harmonic and time-dependent problems, in first- and second-order formulations.
Finite element methods \cite{BOOK_Monk_2003}
 as well as {mixed methods and DG formulations  \cite{Monk_1991, Hiptmair_2002, Cockburn_2004, BOOK_Hesthaven_2008, Houston_2005, Houston_2005-3} have been extensively studied.
 In the continuous setting, Finite Element Exterior Calculus focuses on constructing Finite Element spaces compatible with the de Rham sequence.
In the discontinuous context, divergence conditions can be neglected; they can be penalized either with constant or varying weights, or controlled based on mixed-DG formulations. 
Additionally, a Trefftz DG approach has been analyzed for the second-order time-harmonic equations with constant coefficients \cite{Hiptmair_2013}.}
By contrast, in this setting, Trefftz bases 
satisfy the divergence condition element-wise exactly, but not globally.

In \cite{Imbert-Gerard_2025-3}, a first work on Maxwell's equations considered a second-order formulation for constant permeability. Most importantly, it leverages a de Rham complex and the associated Helmholtz decomposition between spaces of homogeneous polynomials that are key to the study of the QT operator beyond the scalar PDE case. It also includes the definition of QT spaces and the study of the structure of the so-called QT operator to find their dimension. 
Besides defining QT spaces, the current work also studies their best approximation properties, their dimension, and the explicit construction of bases, all thanks to the structure of the QT operator.

In the scalar case, for a PDE of the form $\mathcal L u=0$, the QT property simply consists of the quasi-PDE: $\taylortrun{q}{\ \bm{x}_T} [\mathcal L \varphi] = 0$ where $\taylortrun{q}{\ \bm{x}_T}$ denotes the Taylor polynomial operator of degree $q$ with center $\bm{x}_T$, see for instance \cite{Imbert-Gerard_2025, Imbert-Gerard_2025-2}.
By contrast, for \eqref{eq:first-order Maxwell}, divergence conditions will play a central role in the study of the QT spaces, as it was the case in \cite{Imbert-Gerard_2025-3}.
Most importantly, to avoid any case where the divergence condition associated with \eqref{eq:first-order Maxwell} could degenerate, we introduce the following assumption.

\begin{assumption}\label{ass:coefficients}
    Given an open set $T\subset \Omega$ and $\bm{x}_T\in T$, assume that the value of two coefficients $\permitt, \permeab$ at $\bm{x}_T$ is non-zero:
    \begin{equation*}
        \permitt(\bm{x}_T) \neq 0
        \qquad\text{and}\qquad \permeab(\bm{x}_T) \neq 0.
    \end{equation*}
\end{assumption}

The exact definition of the quasi-Trefftz property will include two components:
\begin{itemize}
    \item systems of quasi-PDEs, including
\begin{itemize}
    \item a Faraday quasi-Trefftz property,
    $\taylortrun{q}{\ \bm{x}_T} [\curl \funel{Q} - \imag \kappa \permeab \funmg{Q}] = \bm{0} $,
    \item a Amp\`ere quasi-Trefftz property,
    $\taylortrun{q}{\ \bm{x}_T} [\curl \funmg{Q} + \imag \kappa \permitt \funel{Q}] = \bm{0}$;
\end{itemize}
    \item quasi-divergence conditions, including
\begin{itemize}
    \item a Gauss quasi-Trefftz property,
    $\taylortrunsc{\ell}{\,\bm{x}_T} [\diver (\permitt\funel{Q})] = 0$,
    \item a Gauss quasi-Trefftz property for magnetism,
    $\taylortrunsc{\ell}{\,\bm{x}_T} [\diver (\permeab\funmg{Q})] = 0$.
\end{itemize}
\end{itemize}
Because of symmetry in the PDEs, it is natural to choose the same degree $q$ for the two quasi-PDEs, and similarly the same degree $\ell$ for the quasi-divergence conditions.
We introduce here the formal definition of the QT property in this context, specifying how $\ell$ compares to $q$ under appropriate smoothness assumptions on the PDE coefficients. 
%\lmig{we talked about the definition of the global space, but actually (now that the content of the article is pretty much complete) it is not used anywhere, so I would like us to think about that one more time}
%\IF{Since we mention the DG formulation, I believe it is fine, for completeness to keep the definition of the global space as well.}
As a side note, although the setting consists of an open set $T$ and a point $\bm{x}_T\in T$, the proposed notation for the local QT spaces depends on $T$ rather than on $\bm{x}_T$, reflecting the fact that these spaces are meant to be defined element by element on a mesh of the computational domain for the discretization of a DG weak formulation.

\begin{definition}[Local quasi-Trefftz space] \label{def:QT local space-divergence}
    Given a positive integer $p$, an open set $T\subset\Omega\subset\mathbb R^3$, a point $\bm{x}_T\in T$, assuming that $(\permitt, \permeab)\in \left(C^{p}(T; \mathbb C)\right)^2$, consider the first-order system \eqref{eq:first-order Maxwell}.
    For any pair of smooth vector-valued functions $(\el,\mg)$, the {\em quasi-Trefftz property of order $p$}\footnote{This refers to the order of the remainder.} %\footnote{In the literature, this property can equivalently be described as "of degree $p-1$", referring to the degree of the Taylor polynomial, and "of order $p$", referring to the order of the remainder.} 
    is
    $$
    \left\{
    \begin{aligned}
        &\taylortrun{p-1}{\ \bm{x}_T} [\curl \el - \imag \kappa \permeab \mg] = \bm{0}, \\
        &\taylortrun{p-1}{\ \bm{x}_T} [\curl \mg + \imag \kappa \permitt \el] = \bm{0}, \\
        &\taylortrunsc{p-1}{\,\bm{x}_T} [\diver (\permitt \el)] = 0 ,\\
        &\taylortrunsc{p-1}{\,\bm{x}_T} [\diver (\permeab \mg)] = 0.
    \end{aligned}
    \right.
    $$
    The {\em local polynomial quasi-Trefftz space of order $p$} is
    \begin{equation}\label{eq:QT local space-divergence}
        \QTopt{p}(T) \coloneq \left\{(\funel{Q}, \funmg{Q}) \in (\pol{p})^3\times (\pol{p})^3 \ :\ 
        \begin{aligned}
            &\taylortrun{p-1}{\ \bm{x}_T} \left[\curl \funel{Q} - \imag \kappa \permeab \funmg{Q}\right] = \bm{0} \\
            &\taylortrun{p-1}{\ \bm{x}_T} \left[\curl \funmg{Q} + \imag \kappa \permitt \funel{Q}\right] = \bm{0} \\
            &\taylortrunsc{p-1}{\,\bm{x}_T} \left[\diver (\permitt\funel{Q})\right] = 0 \\
            &\taylortrunsc{p-1}{\,\bm{x}_T} \left[\diver (\permeab\funmg{Q})\right] = 0
        \end{aligned}
        \right\},
    \end{equation}
    where $\pol{p}$ denotes the complex-valued scalar polynomial space of degree at most $p$.
 \end{definition}

 \begin{remark}[Quasi-Trefftz operator]\label{rmk:qTop}
    As in the scalar case, the local quasi-Trefftz space can be characterized as the kernel of the so-called quasi-Trefftz operator defined by: 
    $$
        \mathcal D_p:(\funel{Q}, \funmg{Q}) \in (\pol{p})^3\times (\pol{p})^3 \ \mapsto\ \left(
        \begin{aligned}
            &\taylortrun{p-1}{\ \bm{x}_T} \left[\curl \funel{Q} - \imag \kappa \permeab \funmg{Q}\right]  \\
            &\taylortrun{p-1}{\ \bm{x}_T} \left[\curl \funmg{Q} + \imag \kappa \permitt \funel{Q}\right]  \\
            &\taylortrunsc{p-1}{\,\bm{x}_T} \left[\diver (\permitt\funel{Q})\right]  \\
            &\taylortrunsc{p-1}{\,\bm{x}_T} \left[\diver (\permeab\funmg{Q})\right] 
        \end{aligned}\right)
    $$
 \end{remark}

\begin{remark}[Global QT-DG space] 
    In the DG context, a global mesh-dependent discrete space is defined piecewise from discrete local spaces, and
    it is similar in the QT-DG context. If $\mathcal T_h$ is a mesh of $\Omega$ and each $T\in\mathcal T_h$ includes a given point $\bm{x}_T\in T$, the {\em global polynomial quasi-Trefftz space of order $p$} is then defined by 
    \begin{equation*}
        \QTopt{p}(\mathcal{T}_h) \coloneq \left\{(\funel{Q}, \funmg{Q}) \ :\ (\funel{Q}, \funmg{Q})|_T \in\QTopt{p}(T)\ \forall T\in\mathcal{T}_h \right\}.
    \end{equation*}
\end{remark}

% \modTC{
% We will exhibit an efficient algorithm to construct polynomial vector fields in this space, which is inspired by the approach developed for the scalar case. However, for this vector-valued case, developing a well-posed algorithm requires more sophisticated mathematical tools, such as the Helmholtz decomposition, the exactness of the homogeneous polynomial de Rham sequence, and the hypothesis that their constant coefficients in the Taylor expansion are non-zero.
% }

This article is organized as follows. Section~\ref{sec:preliminaries} introduces the main notation of the paper and collects some preliminary results, involving Taylor polynomials, the homogeneous polynomial Helmholtz decomposition, and the polynomial de Rham complex. Section \ref{sec:quasi-Trefftz} focuses on the QT space $\QTopt{p}(T)$. It includes its best approximation properties, an algorithm to construct elements of the space, as well as a procedure to construct bases for the space thanks to the previous algorithm. %we develop an efficient \lmig{this word efficient is weird, what does it mean? explain! and maybe explain better how it differs from the other Maxwell paper} algorithm for the construction of QT functions, and we show how to employ it to construct a QT basis. 
Section~\ref{sec:optimal properties} demonstrates the dimension optimality of $\QTopt{p}(T)$.
Section ~\ref{subsec:Implementation consideration} discusses computational aspects of the construction of QT bases, 
and Section~\ref{sec:numerical results} presents numerical results with both constant and variable coefficients, validating the theoretical results.

% \lmig{CHECK OUT WHERE TO PUT THIS IN THE FIRST SECTION - AND SAY ABOUT DIV CONDITION DEGENERATES IN THIS CASE
% \\
% AS IS THIS MIXES REGULARITY AND NON ZERO VALUE, IT IS MORE CONFUSING THAN ANYTHING
% \begin{assumption}\label{ass:coefficients}
%     Given $\bm{x}_T\in T$ the two coefficients $\permitt, \permeab\in C^p(T;\mathbb C)$, their Taylor truncation polynomials of degree $p$ are denoted by
%     \begin{equation*}
%         \taylortrunsc{p}{\,\bm{x}_T} [\permitt] (\bm{x}) = \sum_{k=0}^p \permitt_k(\bm{x})
%         \qquad\text{and}\qquad
%         \taylortrunsc{p}{\,\bm{x}_T} [\permeab] (\bm{x}) = \sum_{k=0}^p \permeab_k(\bm{x}),
%     \end{equation*}
%     where $\permitt_k, \permeab_k \in \hompol{k}$ for all $k\in\indices{0}{p}$, 
%     and we assume that their constant terms are non-trivial, i.e., 
%     \begin{equation*}
%         \permitt_0 = \permitt_0 (\bm{x}) = \permitt(\bm{x}_T) \neq 0
%         \qquad\text{and}\qquad
%         \permeab_0 = \permeab_0 (\bm{x}) = \permeab(\bm{x}_T) \neq 0.
%     \end{equation*}
% \end{assumption}
% }
% \IF{Theoretically, we only ask the constant coefficient ``centered'' in the barycenter to be nonzero, but in practice it would be better if this is true for all points in the domain we are considering. For example, the varying coefficient numerical test, we put the entire domain away from the plane $x=1$.}

\section{Preliminaries}\label{sec:preliminaries}

This section establishes the notation and preliminary results required for the analysis of the QT space %\eqref{eq:QT local space-divergence} 
and the development of an efficient basis-generation algorithm.

\subsection{Notation}\label{subsec:notation}
This work uses standard notation for three-dimensional polynomial vector fields.

%\lmig{here I got rid of the "we" everywhere, I also added def of $\bm m$ and of components of $\bm x$}
%Throughout this paper, 
The sets of positive and non-negative integers are, respectively, denoted by $\mathbb N$ and $\mathbb N_0$. Given two integers $n_1\leq n_2$, $\indices{n_1}{n_2}$ stands for the list of all integers ranging from $n_1$ to $n_2$.
Given two multi-indices $\bm{i} = (i_1, i_2, i_3) \in\mathbb N_0^3$, $\bm{m} = (m_1, m_2, m_3) \in\mathbb N_0^3$ and a point $\bm{x} = (x_1, x_2, x_3)\in\mathbb R^3$:
\begin{equation*}
    \lvert\bm{i}\rvert = i_1 + i_2 + i_3,
    \qquad\quad
    \bm{i}! \coloneqq i_1!\, i_2!\, i_3!,
    \qquad\quad
    \binom{\bm{i}}{\bm{m}} = \binom{i_1}{m_1} \binom{i_2}{m_2} \binom{i_3}{m_3},
    \qquad\quad
    \bm{x}^{\bm{i}} \coloneqq x_1^{i_1} x_2^{i_2} x_3^{i_3},
\end{equation*}
and $\bm{m}\leq \bm{i}$ means that $m_j \leq i_j$ for all $j\in\{1, 2, 3\}$. 
The three vectors of the canonical basis of $\mathbb R^3$ are denoted by $\bm{e}_1$, $\bm{e}_2$, and $\bm{e}_3$.  %The three multi-indices of length one are denoted with $\bm{e}_1$, $\bm{e}_2$, and $\bm{e}_3$.
The polynomial variable in three dimensions is denoted $\bm X$.

The spaces of scalar- and vector-valued polynomials of degree at most equal to $s\in\mathbb N_0$ are, respectively, denoted by $\pol{s}$ and $(\pol{s})^3$. 
% respectively, which can be written as graded spaces
% \IF{Here is a possible modification including the discussion about center of the polynomials}
Given a point $\bm{x}_0\in\mathbb R^3$, an element of these spaces can be written as
\begin{equation*}
    Q(\bm{X}) = \sum_{\vert\bm{i}\rvert\leq s} c_{\bm{i}} (\bm{X} - \bm{x}_0)^{\bm{i}}
    \qquad\text{and}\qquad
    \bm{Q}(\bm{X}) = \begin{pmatrix}
        Q_1(\bm{X}) \\
        Q_2(\bm{X}) \\
        Q_3(\bm{X})
    \end{pmatrix}= \begin{pmatrix}
        \displaystyle\sum_{\vert\bm{i}\rvert\leq s} c_{1, \bm{i}} (\bm{X} - \bm{x}_0)^{\bm{i}} \\
        \displaystyle\sum_{\vert\bm{i}\rvert\leq s} c_{2, \bm{i}} (\bm{X} - \bm{x}_0)^{\bm{i}} \\
        \displaystyle\sum_{\vert\bm{i}\rvert\leq s} c_{3, \bm{i}} (\bm{X} - \bm{x}_0)^{\bm{i}}
    \end{pmatrix},
\end{equation*}
respectively. Correspondingly, we write the polynomial spaces as graded spaces
\begin{equation}\label{eq:graded space decomposition}
    \pol{p} = \bigoplus_{k=0}^p \hompol{k}
    \qquad\qquad\text{and}\qquad\qquad
    (\pol{p})^3 = \bigoplus_{k=0}^p \left(\hompol{k}\right)^3,
\end{equation}
% \IF{Mention of center? It is equivalent with a change of coordinates, but for the Taylor polynomial we specify the center}
where $\hompol{k} = Span \{ (\bm{X}-\bm{x}_0)^{\bm{i}}, \lvert\bm{i}\rvert = k \}$ is the homogeneous polynomial space of degree $k$ centered at $\bm{x}_0$.
For simplicity, the notation for the homogeneous polynomial spaces does not explicitly mention the point $\bm{x}_0$. 
Yet, given two distinct points $(\bm{x}_1,\bm{x}_2)\in(\mathbb R^3)^2$,  $\bm x_1\neq \bm x_2$, of course $$Span \{ (\bm{X}-\bm{x}_1)^{\bm{i}}, \lvert\bm{i}\rvert = k \}\neq Span \{ (\bm{X}-\bm{x}_2)^{\bm{i}}, \lvert\bm{i}\rvert = k \}.$$
\begin{remark}\label{rmk:indepcenter}
    %\lmig{Ilaria: check if you agree with this}\IF{Seems find to me}
    Even though the $\hompol{k}$ spaces %, as well as their various subspaces defined below,
    are distinct when centered at distinct points, there are a number of aspects of these spaces that can be studied independently of the center $\bm x_0$, and, most importantly, many calculations can be performed once (for instance centered at the origin) and then applied at any other center $\bm x_0$.
\end{remark}

Given a sufficiently regular scalar function $f$ and a sufficiently regular three-dimensional vector field $\bm{F}$, we write
\begin{equation*}
    D^{\bm{i}} f \coloneqq \partial_{x_1}^{i_1} \partial_{x_2}^{i_2} \partial_{x_3}^{i_3} f
    \qquad\qquad\text{and}\qquad\qquad
    D^{\bm{i}} \bm{F} \coloneqq \begin{pmatrix}
        D^{\bm{i}} F_1 \\ D^{\bm{i}} F_2 \\ D^{\bm{i}} F_3
    \end{pmatrix}.
\end{equation*}
%\IF{In some other works, the degree written in the Taylor polynomial operator symbol is the order of the remainder and not the degree of the polynomial, like in \cite{Imbert-Gerard_2025}. I think the notation we are using now is fine since we define everything (it is also coherent with your preprints \cite{Imbert-Gerard_2025-2, Imbert-Gerard_2025-3}, but I can easily change this if we want.}
Then, their Taylor polynomial of degree $s\in\mathbb N_0$ at $\bm{x}_0\in\mathbb R^3$, $\taylortrunsc{s}{\,\bm{x}_0} [f] \in\pol{s}$ and $\taylortrun{s}{\,\,\bm{x}_0} [\bm{F}] \in (\pol{s})^3$, as
\begin{equation}\label{eq:scalar Taylor polynomial}
    \taylortrunsc{s}{\,\bm{x}_0} [f] (\bm{x}) = \sum_{\lvert\bm{i}\rvert\leq s} \frac{1}{\bm{i}!} D^{\bm{i}} f(\bm{x}_0) (\bm{X}-\bm{x}_0)^{\bm{i}},
\end{equation}
\begin{equation}\label{eq:vector Taylor polynomial}
    \taylortrun{s}{\,\,\bm{x}_0} [\bm{F}] (\bm{x}) = \sum_{\lvert\bm{i}\rvert \leq s} \frac{1}{\bm{i}!} D^{\bm{i}} \bm{F}(\bm{x}_0) (\bm{X}-\bm{x}_0)^{\bm{i}}
    = \begin{pmatrix}
        \taylortrunsc{s}{\,\bm{x}_0} [F_1](\bm{x}) \\
        \taylortrunsc{s}{\,\bm{x}_0} [F_2](\bm{x}) \\
        \taylortrunsc{s}{\,\bm{x}_0} [F_3](\bm{x})
    \end{pmatrix}.
\end{equation}

Given an open set $\gendom\subseteq \mathbb R^3$ and a vector field $\bm{F}\in  C^s(\gendom; \mathbb C^3)$, we denote the standard $ C^s$ norm and seminorms, $s\in \mathbb N_0$, $s\leq p$, as follows:
\begin{equation}\label{eq:norms}
    \lVert\bm{F}\rVert_{ C^0(\gendom)} \coloneqq \sup_{\bm{x}\in \gendom} \Vert\bm{F}\rVert, 
    \qquad\text{and}\qquad
    \vert\bm{F}\rvert_{ C^s(\gendom)} \coloneqq \max_{\bm{i}\in\mathbb N_0^3, \lvert\bm{i}\rvert = s} \left\lVert D^{\bm{i}} \bm{F}\right\rVert_{ C^0(\gendom)},
\end{equation}
where $\lVert\,\cdot\,\rVert$ is the Euclidean norm.
%\IF{Moved here as suggested}

For any $k > 0$ and given a point $\bm{x}_0\in\mathbb R^3$, several subspaces of $(\hompol{k})^3$ are of interest in this work. Consider here:
%the homogeneous polynomial spaces $\homharm{k}$, $\homdivfreeall{k}$, $\homcurlfreeall{k}$, $\homdivfree{k}$, and $\homcurlfree{k}$ are defined as subspaces of $(\hompol{k})^3$ as follows for $k > 0$:
% \lmig{it seems that it would be more clear to define spaces (as kernels) separately from the direct sum properties} 
% \IF{Alternative way below}
% \begin{itemize}
%     \item $\homharm{k}$ is the space of ${\it harmonic}$ polynomials, that is, the divergence-free and curl-free homogeneous polynomial vector field space of degree $k$; 
%     \item $\homdivfreeall{k} = \homdivfree{k} \oplus \homharm{k}$ is the space of {\it solenoidal} polynomials, that is, the divergence-free homogeneous polynomial vector field space of degree $k$;
%     \item $\homcurlfreeall{k} = \homcurlfree{k} \oplus \homharm{k}$ is the space of {\it irrotational} polynomials, that is, the curl-free homogeneous polynomial vector field space of degree $k$.
% \end{itemize}
% \IF{Alternative introduction of these spaces:} \lmig{yes, much better! make sure to state this for $k>0$}
%\modTC{
\begin{itemize}
    \item $\homdivfreeall{k} \coloneqq \ker (\diver |_{(\hompol{k})^3})$ is the space of {\it solenoidal} polynomials, i.e., the divergence-free homogeneous polynomial vector field space of degree $k$;
    \item $\homcurlfreeall{k} \coloneqq \ker (\curl |_{(\hompol{k})^3})$ is the space of {\it irrotational} polynomials, i.e., the curl-free homogeneous polynomial vector field space of degree $k$;
    \item $\homharm{k} \coloneqq \homdivfreeall{k} \cap \homcurlfreeall{k}$ is the space of ${\it harmonic}$ polynomials, i.e., the divergence-free and curl-free homogeneous polynomial vector field space of degree $k$; 
    \item $\homdivfree{k}$ is the complement of $\homharm{k}$ in $\homdivfreeall{k}$, i.e., $\homdivfreeall{k} = \homdivfree{k} \oplus \homharm{k}$;
    \item $\homcurlfree{k}$ is the complement of $\homharm{k}$ in $\homcurlfreeall{k}$, i.e., $\homcurlfreeall{k} = \homcurlfree{k} \oplus \homharm{k}$;
\end{itemize}
%}
The case $k = 0$ is simpler, as all constant vector fields are harmonic and accordingly we define $\homharm{0} \coloneqq (\hompol{0})^3$.
For simplicity, we also denote by $\harm{p}$, $p\in\mathbb N_0$, the space of all harmonic polynomial vector fields of degree at most $p$, i.e., $$\harm{p} \coloneqq \bigoplus_{k=0}^p \homharm{k},$$
and for all $p>0$
$$
    \curlfree{p} \coloneqq \bigoplus_{k=1}^p \homcurlfree{k}
    \quad\text{ and }\quad
    \divfree{p} \coloneqq \bigoplus_{k=1}^p \homdivfree{k}.
$$
As a reminder, the dimensions of key polynomial spaces are recorded in Table \ref{tab:all spaces dimensions}.
\begin{table}[tb]
    \centering
    \renewcommand{\arraystretch}{1.5}
    \begin{tabular}{ccccccc}
        \toprule
        Space & $(\hompol{k})^3$ & $\homharm{k}$ & $\homdivfreeall{k}$ & $\homcurlfreeall{k}$ & $\homdivfree{k}$ & $\homcurlfree{k}$ \\ \midrule
        Dim. & $\frac{3}{2} (k+1)(k+2)$ & $2k+3$ & $(k+1)(k+3)$ & $\frac{1}{2}(k+2)(k+3)$ & $k(k+2)$ & $\frac{1}{2} k(k+1)$ \\ \bottomrule
    \end{tabular}
    \caption{Dimensions of various homogeneous polynomial spaces for $k>0$.}
    \label{tab:all spaces dimensions}
\end{table}

% \begin{table}[tb]
%     \centering
%     \begin{tabular}{c|c}
%         Space & Dimension \\ \hline
%         $(\hompol{k})^3$ & $\frac{3}{2} (k+1)(k+2)$ \\
%         $\homharm{k}$ & $2k+3$ \\
%         $\homdivfreeall{k}$ & $(k+1)(k+3)$ \\
%         $\homcurlfreeall{k}$ & $\frac{1}{2}(k+2)(k+3)$ \\
%         $\homdivfree{k}$ & $k(k+2)$ \\
%         $\homcurlfree{k}$ & $\frac{1}{2} k(k+1)$
%     \end{tabular}
%     \caption{Dimensions of the spaces defined in Subsection \ref{subsec:Helmholtz}.}
%     \label{tab:all spaces dimensions}
% \end{table}

\begin{remark}
%\lmig{check this one too}\IF{Yes, should be fine as well}
    Following Remark \ref{rmk:indepcenter} and the definitions of $\homdivfreeall{k}$, $\homcurlfreeall{k}$,  $\homharm{k}$ (except $\homharm{0}$), $\homdivfree{k}$ and  $\homcurlfree{k}$, these spaces also depend on the center $\bm{x}_0$. However, $\harm{p}$ is the full space of polynomial harmonic vector fields of degree at most equal to $p$, and it does not depend on $\bm{x}_0$; similarly, $\curlfree{p}$ and $\divfree{p}$ do not depend on $\bm{x}_0$ either.
\end{remark}

\subsection{De Rham complex and more
%Homogeneous Polynomial Helmholtz Decomposition and De Rham Complex
}
\label{ssec:HPHD}

This section presents key results regarding the homogeneous polynomial \emph{de Rham complex} and  \emph{Helmholtz decomposition}. 
For comparison, polynomial decompositions are standard in Galerkin methods for Maxwell's equations, both for time-dependent and time-harmonic formulations; see for instance Remarks 1 and 15 in~\cite{Moiola_2018} in the context of Trefftz methods or \cite{BOOK_Monk_2003} in the context of Finite Elements. As an illustration of Remark \ref{rmk:indepcenter}, none of the results below depend on the center $\bm x_0$ in the definition of homogeneous polynomial spaces. 

As a reminder, the following homogeneous polynomial \emph{Helmholtz decomposition} was stated and proved in~\cite{Imbert-Gerard_2025-3}.
%The homogeneous polynomial \emph{Helmholtz decomposition} of \cite[Section 3]{Imbert-Gerard_2025-3} will again play a key role in this work. It is stated here for convenience.
\begin{proposition}[Unique Helmholtz decomposition]
    Let $k\in\mathbb N$ and a point $\bm{x}_0\in\mathbb R^3$. Any homogeneous polynomial vector field $\bm{Q}\in(\hompol{k})^3$ of degree $k$ can be uniquely decomposed into the sum of a solenoidal polynomial vector field $\bm{F}\in\homdivfree{k}$, an irrotational polynomial vector field $\bm{G}\in\homcurlfree{k}$, and a harmonic polynomial vector field $\bm{H}\in\homharm{k}$:
    \begin{equation}\label{eq:Helmholtz}
        \setlength{\arraycolsep}{2pt}
        \begin{array}{c c c c c c c}
            (\hompol{k})^3 & = & \homdivfree{k} & \oplus & \homcurlfree{k} & \oplus & \homharm{k} \\
            \bm{Q} & = & \bm{F} & + & \bm{G} & + & \bm{H}.
        \end{array}
    \end{equation}
\end{proposition}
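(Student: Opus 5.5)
The plan is to reduce the statement to a dimension count. Because $\homdivfree{k}$ and $\homcurlfree{k}$ are \emph{defined} as complements of $\homharm{k}$ in $\homdivfreeall{k}$ and $\homcurlfreeall{k}$, the only real content is twofold: the three subspaces form a direct sum, and their dimensions add up to $\dim(\hompol{k})^3=\frac32(k+1)(k+2)$. Uniqueness of the decomposition then follows from directness, and existence follows from the dimension count.

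\emph{Directness.} Suppose $\bm F+\bm G+\bm H=\bm 0$ with $\bm F\in\homdivfree{k}$, $\bm G\in\homcurlfree{k}$ and $\bm H\in\homharm{k}$.
\begin{enumerate}[(i)]
\item Write $\bm F=-(\bm G+\bm H)$. The left side is divergence-free and the right side is curl-free, so $\bm F\in\homdivfreeall{k}\cap\homcurlfreeall{k}=\homharm{k}$.
\item Since also $\bm F\in\homdivfree{k}$ and $\homdivfree{k}\cap\homharm{k}=\{\bm 0\}$, we get $\bm F=\bm 0$.
\item Then $\bm G=-\bm H\in\homcurlfree{k}\cap\homharm{k}=\{\bm 0\}$, so $\bm G=\bm H=\bm 0$.
\end{enumerate}
Hence the sum is direct, and its dimension is $\dim\homdivfreeall{k}+\dim\homcurlfreeall{k}-\dim\homharm{k}$.

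\emph{Dimensions.} I would establish the entries of Table~\ref{tab:all spaces dimensions} as follows.
\begin{enumerate}[(i)]
\item For $\homcurlfreeall{k}$: by the polynomial Poincaré lemma (explicitly, $\bm Q=\grad\phi$ with $\phi(\bm X)=\frac{1}{k+1}(\bm X-\bm x_0)\cdot\bm Q$ when $\curl\bm Q=\bm 0$, using Euler's identity), $\homcurlfreeall{k}=\grad\hompol{k+1}$. Since $\grad$ is injective on $\hompol{k+1}$ for $k+1\ge1$, this gives $\dim\homcurlfreeall{k}=\frac12(k+2)(k+3)$.
\item For $\homdivfreeall{k}$: the map $\diver:(\hompol{k})^3\to\hompol{k-1}$ is surjective. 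Given $f$, take $\bm F=(\int f\,dx_1,0,0)$, where the antiderivative of each monomial is taken homogeneous of degree $k$. Rank–nullity then gives $\dim\homdivfreeall{k}=\frac32(k+1)(k+2)-\frac12k(k+1)=(k+1)(k+3)$.
\item For $\homharm{k}$: by (i), $\homharm{k}=\grad\{\phi\in\hompol{k+1}:\Delta\phi=0\}$, and $\grad$ is injective here. So its dimension equals that of the homogeneous harmonic scalars of degree $k+1$, which is $\frac12(k+2)(k+3)-\frac12k(k+1)=2k+3$, provided $\Delta:\hompol{k+1}\to\hompol{k-1}$ is surjective.
\end{enumerate}

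\emph{Main obstacle.} The step I expect to be hardest is the surjectivity of the Laplacian between homogeneous spaces in (iii). I would prove it via the Fischer inner product $\langle p,q\rangle=p(\partial)\overline{q}\,|_{0}$, under which $\Delta$ is adjoint to multiplication by $|\bm X-\bm x_0|^2$. That multiplication map is clearly injective, so its adjoint $\Delta$ is surjective.

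\emph{Conclusion.} With these values,
\[
(k+1)(k+3)+\tfrac12(k+2)(k+3)-(2k+3)=\tfrac32(k+1)(k+2)=\dim(\hompol{k})^3 .
\]
So the direct sum exhausts $(\hompol{k})^3$, which proves \eqref{eq:Helmholtz} with a unique decomposition. Since all the arguments above are translation invariant, the center $\bm x_0$ plays no role.
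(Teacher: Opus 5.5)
Your proof is correct. Note that the paper gives no argument of its own for this proposition. It quotes the decomposition from \cite{Imbert-Gerard_2025-3} and records the dimensions in Table~\ref{tab:all spaces dimensions} only ``as a reminder''. Your route (directness via $\homdivfreeall{k}\cap\homcurlfreeall{k}=\homharm{k}$, then a dimension count in which you rederive the table) therefore supplies exactly what the paper takes for granted.

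Two points of comparison are worth noting.

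First, your directness step uses nothing beyond $\homdivfree{k}\cap\homharm{k}=\homcurlfree{k}\cap\homharm{k}=\{\bm 0\}$, so it holds for every choice of complements. This is the right level of generality, since the paper defines $\homdivfree{k}$ and $\homcurlfree{k}$ only as \emph{some} complement of $\homharm{k}$.

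Second, the one nontrivial input you isolate is surjectivity of $\Delta\colon\hompol{k+1}\to\hompol{k-1}$. This is equivalent to $\dim\homharm{k}=2k+3$, which is also what the rank--nullity step in Lemma~\ref{prop:surjectivity operators} implicitly relies on. Inside the paper it can be obtained without the Fischer inner product. Since $\bm\Phi_{k,\bm i}=\grad(\bm X-\bm x_0)^{\bm i}$, the operator $\mathcal D_k$ of Theorem~\ref{th:I**} satisfies $\mathcal D_k(\grad q)=\Delta q$ for $q$ in the span of the degree-$(k+1)$ monomials with $i_1\geq 2$. These monomials number exactly $\dim\hompol{k-1}$, via $\bm i\mapsto\bm i-2\bm e_1$. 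The induction on $i_1$ in that proof shows injectivity, hence surjectivity of $\Delta$. Your Fischer-duality argument is shorter and basis-free; the triangular one is constructive and matches how the paper's algorithms are built.

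Incidentally, your description $\homharm{k}=\grad\{\phi\in\hompol{k+1}:\Delta\phi=0\}$ has the correct degree. The lemma on harmonic polynomials in Section~\ref{subsec:construction of quasi-Trefftz bases} writes $\hompol{k}$ instead.
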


%One key point to be able to develop an efficient algorithm for constructing quasi-Trefftz functions in $\QTopt{p}(T)$ \eqref{eq:QT local space-divergence} is the exactness of the homogeneous polynomial de Rham complex. 
In the polynomial context, the common exactness result involves the standard polynomial spaces, see for instance \cite{Douglas_2006}, and can be summarized as:
\begin{equation*}
    \mathbb R \hookrightarrow \pol{p+3} \xrightarrow{\grad} (\pol{p+2})^3 \xrightarrow{\curl} (\pol{p+1})^3 \xrightarrow{\diver} \pol{p} \to \{0\}.
\end{equation*}
Its extension to homogeneous spaces reads:
\begin{equation}\label{eq:De Rham complex}
    \{0\} \to \hompol{k+3} \xrightarrow{\grad} (\hompol{k+2})^3 \xrightarrow{\curl} (\hompol{k+1})^3 \xrightarrow{\diver} \hompol{k} \to \{0\}.
\end{equation}
The next result focuses on the two differential operators that come into play in the definition of the quasi-Trefftz space.
%\modTC{
\begin{lemma}[Surjective curl and divergence operators]\label{prop:surjectivity operators}
    Let $k\in\mathbb N$ and a point $\bm{x}_0\in\mathbb R^3$. Then,
    \begin{enumerate}[(a)]
        \item\label{curlsurj} the operator $\curl|_{\homdivfreeall{k}} \colon \homdivfreeall{k} \to \homdivfreeall{k-1}$ is surjective; \label{it:surjective curl}
        \item the operator $\diver|_{\homcurlfreeall{k}} \colon \homcurlfreeall{k} \to \hompol{k-1}$ is surjective.
    \end{enumerate}
\end{lemma}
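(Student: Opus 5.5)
The plan is to deduce both surjectivity statements directly from the exactness of the homogeneous de Rham complex \eqref{eq:De Rham complex}, using the Helmholtz decomposition \eqref{eq:Helmholtz} to restrict preimages to the required subspaces.

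For (a), first note that $\curl$ maps $(\hompol{k})^3$ into $(\hompol{k-1})^3$, and since $\diver\curl = 0$ its image lies in $\homdivfreeall{k-1}$. So the map $\curl|_{\homdivfreeall{k}}$ is well defined with values in $\homdivfreeall{k-1}$. Now take $\bm{S}\in\homdivfreeall{k-1} = \ker(\diver|_{(\hompol{k-1})^3})$. Exactness of \eqref{eq:De Rham complex} at $(\hompol{k-1})^3$ (the index shift $k+1\to k-1$, valid for $k\ge1$) gives some $\bm{Q}\in(\hompol{k})^3$ with $\curl\bm{Q}=\bm{S}$.

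Next decompose $\bm{Q}=\bm{F}+\bm{G}+\bm{H}$ according to \eqref{eq:Helmholtz}. Both $\bm{G}\in\homcurlfree{k}$ and $\bm{H}\in\homharm{k}$ are curl-free, so $\curl\bm{F}=\bm{S}$ with $\bm{F}\in\homdivfree{k}\subset\homdivfreeall{k}$. This proves (a).

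For (b), the map is well defined because $\diver$ sends $(\hompol{k})^3$ to $\hompol{k-1}$. Given $f\in\hompol{k-1}$, surjectivity of $\diver$ at the end of \eqref{eq:De Rham complex} (shifted to $(\hompol{k})^3\to\hompol{k-1}$) gives $\bm{Q}\in(\hompol{k})^3$ with $\diver\bm{Q}=f$. Decomposing again, $\bm{F}$ and $\bm{H}$ are divergence-free, so $\diver\bm{G}=f$ with $\bm{G}\in\homcurlfree{k}\subset\homcurlfreeall{k}$.

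The only step requiring care is the index bookkeeping and the edge case $k=1$. For $k=1$ we need exactness of the complex at degree $0$, i.e.\ $(\hompol{0})^3\to\hompol{-1}=\{0\}$, so the sequence is effectively truncated there. Part (b) is then trivial if $\hompol{0}$ is the target, since $\diver$ of a linear field like $\bm{X}-\bm{x}_0$ is a nonzero constant. For (a) with $k=1$, every constant field is divergence-free and equals the curl of a linear field, e.g.\ $\bm{c}=\curl\big(\tfrac12\,\bm{c}\times(\bm{X}-\bm{x}_0)\big)$, and this field is itself divergence-free.

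If the paper's stated complex is only meant for indices where all degrees are nonnegative, these low-degree cases can be checked by these explicit formulas. Alternatively, one can argue generally via the Koszul/homotopy operators $\bm{S}\mapsto \frac{1}{k+1}\bm{S}\times(\bm{X}-\bm{x}_0)$ and $f\mapsto \frac{1}{k+2}f\,(\bm{X}-\bm{x}_0)$, which give explicit preimages. For (a) this needs a correction to land in the divergence-free space, and that correction is supplied by the Helmholtz argument above.
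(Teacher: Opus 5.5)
Your argument is correct, but it takes a different route from the paper.

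\textbf{The paper's route.} The paper argues purely by dimension counting. It uses only that $\curl$ maps into $\homdivfreeall{k-1}$ and $\diver$ into $\hompol{k-1}$, which is just $\diver\curl=0$. It then gets the rank from rank--nullity, with $\ker(\curl|_{\homdivfreeall{k}})=\ker(\diver|_{\homcurlfreeall{k}})=\homharm{k}$ and the dimensions of Table~\ref{tab:all spaces dimensions}: $(k+1)(k+3)-(2k+3)=k(k+2)$ and $\tfrac12(k+2)(k+3)-(2k+3)=\tfrac12 k(k+1)$.

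\textbf{Your route.} You use full exactness of the complex to produce a preimage in $(\hompol{k})^3$. You then use the Helmholtz decomposition to discard its curl-free (respectively divergence-free) part.

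\textbf{What each buys.} The paper's version is shorter and treats all $k\geq 1$ at once. It needs nothing beyond the complex property, but it is non-constructive and rests on the dimension formulas, which themselves encode exactness-type information. Your version avoids dimension counts altogether. It also places the preimage in the complements $\homdivfree{k}$ and $\homcurlfree{k}$, so it directly proves that $\curl|_{\homdivfree{k}}$ and $\diver|_{\homcurlfree{k}}$ are onto. Combined with the trivial injectivity (these complements meet $\homharm{k}$ only in $\{\bm 0\}$), this yields Proposition~\ref{prop:bijectivity operators} without going through the lemma.

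\textbf{Edge case.} The price of your route is the case $k=1$ of (a), which the stated complex does not cover. You handle it correctly via $\bm c=\curl\bigl(\tfrac12\bm c\times(\bm X-\bm x_0)\bigr)$. Part (b) needs no special case, since the shift there only requires the last slot $\hompol{k-1}$ with $k-1\geq 0$.

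\textbf{A slip in your closing aside.} The Koszul preimage $\tfrac{1}{k+2}f(\bm X-\bm x_0)$ for (b) is not curl-free in general: its curl is $\tfrac{1}{k+2}\grad f\times(\bm X-\bm x_0)$. So it also needs the Helmholtz correction, not only the one for (a). This does not affect your main proof.
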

\begin{proof}
    %In this proof, we use the space dimensions shown in Table \ref{tab:all spaces dimensions}. \\
    \noindent
    %\ref{curlsurj} 
    The range of the $\curl|_{\homdivfreeall{k}}$ operator is indeed equal to the $\homdivfreeall{k-1}$ space, because:
    \begin{itemize}
        \item on the one hand, $\mathcal R(\curl|_{\homdivfreeall{k}}) \subset  \homdivfreeall{k-1}$,
        \\a direct consequence of the exactness of the de Rham sequence,
        \item on the other hand, $\text{rk}(\curl|_{\homdivfreeall{k}}) = \dim \homdivfreeall{k-1}$,\\
        which follows from the rank-nullity theorem combined with $\ker (\curl|_{\homdivfreeall{k}}) = \homharm{k} $.
    \end{itemize}
    %(a) For all $\bm{F}\in\homdivfreeall{k}$, $\diver\curl\bm{F} = 0$, and, as a consequence,
    % \begin{equation}\label{eq:surjectivity 1}
    %     \mathcal R %\text{Range} 
    %     (\curl|_{\homdivfreeall{k}}) \subseteq \homdivfreeall{k-1}
    %     \qquad\text{and}\qquad
    %     \text{rk}%\text{rank} 
    %     (\curl|_{\homdivfreeall{k}}) \leq \dim \homdivfreeall{k-1} = k (k+2).
    % \end{equation}
    % Moreover, 
    % \begin{equation*}
    %     \bm{F} \in \ker (\curl|_{\homdivfreeall{k}})
    %     \qquad\Leftrightarrow\qquad
    %     \diver\bm{F} = 0\ \text{and}\ \curl\bm{F} = \bm{0} 
    %     \qquad\Leftrightarrow\qquad
    %     \bm{F} \in \homharm{k}
    % \end{equation*}
    % which implies
    % \begin{equation*}
    %     \text{nullity} (\curl|_{\homdivfreeall{k}}) = \dim \homharm{k} = 2k+3.
    % \end{equation*}
    % By applying the rank-nullity theorem, we get
    % \begin{equation}\label{eq:surjectivity 2}
    %     \text{rank} (\curl|_{\homdivfreeall{k}}) = \dim \homdivfreeall{k} - \text{nullity} (\curl|_{\homdivfreeall{k}}) = (k+1) (k+3) - (2k+3) = k (k+2).
    % \end{equation}
    % Combining \eqref{eq:surjectivity 1} and \eqref{eq:surjectivity 2}, we conclude that $\curl \colon \homdivfreeall{k} \to \homdivfreeall{k-1}$ is surjective.\\
    % \noindent
    %
    Similarly, the range of the $\diver|_{\homcurlfreeall{k}}$ operator is equal to the $\hompol{k-1}$ space, because:
    \begin{itemize}
        \item on the one hand, $\mathcal R(\diver|_{\homcurlfreeall{k}}) \subset  \hompol{k-1}$,
        \item on the other hand, $\text{rk}(\diver|_{\homcurlfreeall{k}}) = \dim \hompol{k-1}$,\\
        which follows again from the rank-nullity theorem combined with $\ker (\diver|_{\homcurlfreeall{k}}) = \homharm{k} $.
    \end{itemize}
    % (b) can be shown similarly to \ref{it:surjective curl}, with the difference that the image of $\diver$ is the entire polynomial space $\hompol{k-1}$.
\end{proof}
% }
%\lmig{comment on next result?}
Finally, the following proposition will be fundamental for the construction of QT basis functions in Section~\ref{subsec:construction}.
% \modTC{
\begin{proposition}[Bijective curl and divergence operators]\label{prop:bijectivity operators}
    Let $k\in\mathbb N$ and a point $\bm{x}_0\in\mathbb R^3$. %, and let $\homharm{k}, \homdivfreeall{k}, \homdivfree{k}, \homcurlfreeall{k}, \homcurlfree{k}$ the polynomial vector field spaces introduced in Subsection \ref{subsec:notation}. %Additionally, let $\curl$ and $D_{k-1}$ denote the curl and divergence operators restricted to $(\hompol{k})^3$, as in \cite{Imbert-Gerard:25-Maxwell}. 
    Then,
    \begin{enumerate}[(a)]
        \item the operator $\curl|_{\homdivfree{k}} \colon \homdivfree{k} \to \homdivfreeall{k-1}$ is bijective; \label{it:bijective curl}
        \item the operator $\diver|_{\homcurlfree{k}} \colon \homcurlfree{k} \to \hompol{k-1}$ is bijective.
    \end{enumerate}
\end{proposition}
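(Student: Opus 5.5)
The plan is to deduce both parts from Lemma~\ref{prop:surjectivity operators}, using the direct-sum decompositions $\homdivfreeall{k} = \homdivfree{k}\oplus\homharm{k}$ and $\homcurlfreeall{k} = \homcurlfree{k}\oplus\homharm{k}$. The idea is that the harmonic part lies in the kernel of both operators. Restricting to a complement of this kernel should therefore keep surjectivity and gain injectivity.

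For part~\ref{it:bijective curl}, I will first show surjectivity. Take $\bm{G}\in\homdivfreeall{k-1}$. By Lemma~\ref{prop:surjectivity operators}(a) there exists $\bm{F}\in\homdivfreeall{k}$ with $\curl\bm{F}=\bm{G}$. Decompose $\bm{F}=\bm{F}^*+\bm{H}$ with $\bm{F}^*\in\homdivfree{k}$ and $\bm{H}\in\homharm{k}$. Since $\curl\bm{H}=\bm{0}$, we get $\curl\bm{F}^*=\bm{G}$, so the restriction to $\homdivfree{k}$ is onto $\homdivfreeall{k-1}$.

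Next, injectivity. If $\bm{F}^*\in\homdivfree{k}$ satisfies $\curl\bm{F}^*=\bm{0}$, then $\bm{F}^*$ is both divergence-free and curl-free, so $\bm{F}^*\in\homharm{k}$. The sum defining $\homdivfree{k}$ is direct, so $\homdivfree{k}\cap\homharm{k}=\{\bm{0}\}$, and hence $\bm{F}^*=\bm{0}$. As a consistency check, the dimensions in Table~\ref{tab:all spaces dimensions} agree: $\dim\homdivfree{k}=k(k+2)=\dim\homdivfreeall{k-1}$.

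Part (b) follows the same pattern, using Lemma~\ref{prop:surjectivity operators}(b). Given $q\in\hompol{k-1}$, pick $\bm{G}\in\homcurlfreeall{k}$ with $\diver\bm{G}=q$. Write $\bm{G}=\bm{G}^*+\bm{H}$ with $\bm{G}^*\in\homcurlfree{k}$ and $\bm{H}\in\homharm{k}$. Since $\diver\bm{H}=0$, we get $\diver\bm{G}^*=q$, which gives surjectivity. For injectivity, an element of $\homcurlfree{k}$ with zero divergence is harmonic, so it lies in $\homcurlfree{k}\cap\homharm{k}=\{\bm{0}\}$. The dimension check is again consistent: $\tfrac12k(k+1)=\dim\hompol{k-1}$.

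There is no real obstacle here. The only point needing care is that injectivity relies solely on the directness of the complement decompositions, not on any particular choice of complement. The statement therefore holds for any complements $\homdivfree{k}$ and $\homcurlfree{k}$ of $\homharm{k}$ in $\homdivfreeall{k}$ and $\homcurlfreeall{k}$, respectively.
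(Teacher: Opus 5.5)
Your proof is correct and follows the same route as the paper: both deduce bijectivity from the surjectivity in Lemma~\ref{prop:surjectivity operators}, together with the fact that $\homharm{k}$ is exactly the kernel of the unrestricted operator and is complementary to $\homdivfree{k}$ (resp.\ $\homcurlfree{k}$). You make explicit the two steps the paper leaves implicit, namely that surjectivity survives projection onto the complement and that injectivity follows from directness of the sum.
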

\begin{proof}
    These results directly follow from Lemma \ref{prop:surjectivity operators} together with the facts that
    \begin{equation*}
        \homdivfreeall{k} = \homdivfree{k} \oplus \ker (\curl|_{\homdivfree{k}})
        \qquad\text{and}\qquad
        \homcurlfreeall{k} = \homcurlfree{k} \oplus \ker( \diver|_{\homcurlfree{k}} ).
    \end{equation*}
    %By definition of the spaces, $\ker(\curl|_{\homdivfree{k}}) = \{\bm{0}\}$ and $\ker(\diver|_{\homcurlfree{k}}) = \{\bm{0}\}$. The conclusion follows from the fact that 
    % \begin{equation*}
    %     \dim \homdivfree{k} = \dim \homdivfreeall{k-1},
    %     \qquad%\text{and}\qquad
    %     \dim \homcurlfree{k} = \dim \hompol{k=1},
    % \end{equation*}
\end{proof}

For comparison, the curl and divergence operators defined between convenient polynomial spaces are also considered in \cite{Imbert-Gerard_2025-3}. There, Lemmas 2 and 3 focus on these operators defined on standard polynomial spaces, while Proposition 5 corresponds to Proposition \ref{prop:bijectivity operators}-(b). The latter is proved again here simply for the sake of completeness.

% \IF{I checked the results presented in \cite{Imbert-Gerard_2025-3}:
% \begin{itemize}
%     % \item The existence and uniqueness of the Helmholtz decomposition \eqref{eq:Helmholtz} is proved and stated exactly like here, \cite[Proposition 3]{Imbert-Gerard_2025-3}
%     \item Curl operator:
%     \begin{itemize}
%         \item \cite[Lemma 3]{Imbert-Gerard_2025-3} shows how to find a basis of the range of the curl operator defined on the entire polynomial space
%         \item There is no explicit results about the restricted operators
%     \end{itemize}
%     \item Divergence operator:
%     \begin{itemize}
%         \item \cite[Lemma 2]{Imbert-Gerard_2025-3} states and proves the surjectivity of the operator defined on the full polynomial space
%         \item \cite[Proposition 5]{Imbert-Gerard_2025-3} is exactly Proposition \ref{prop:bijectivity operators}-(b)
%     \end{itemize}
% \end{itemize}
% As a consequence, I would leave the results as they are presented here: no proof for the unique Helmholtz decomposition, and short proof for Proposition~\ref{prop:surjectivity operators} and \ref{prop:bijectivity operators}.}

\section{Quasi-Trefftz space}\label{sec:quasi-Trefftz}

This section shows the main result of this paper, studying the properties of the QT space $\QTopt{p}(T)$ and developing an efficient algorithm for the construction of a QT function and basis. In particular, we will prove that this space has a significantly lower dimension compared to the complete polynomial space $(\pol{p})^3\times (\pol{p})^3$, see Corollary~\ref{cor:dim} and Figure~\ref{fig:dimensions}, while maintaining the same best approximation properties, see Theorem~\ref{th:approximation properties QTopt}.

From now on, we will use the natural decomposition of the electric and magnetic polynomial vector fields:
\begin{equation*}
    \funel{Q} = \displaystyle\sum_{k=0}^p \funel{Q}_k
    \qquad\text{and}\qquad
    \funmg{Q} = \displaystyle\sum_{k=0}^p \funmg{Q}_k,
\end{equation*}
where $\funel{Q}_k, \funmg{Q}_k$ are the homogeneous components of $\funel{Q}, \funmg{Q}$, respectively, of degree $k$ centered at $\bm{x}_T$, 
%\lmig{since nothin is said about T in the previous sentence, I wouldn't say anything here about xT - since it is not a formal result I think it is ok like that}\IF{Here, do we need to specify that the point belongs to $T$ (hence the subscript), which is an open set?},
i.e., $\funel{Q}_k, \funmg{Q}_k \in (\hompol{k})^3$, $k\in\indices{0}{p}$.
Similarly, we decompose the Taylor polynomials of the two-variable coefficients:
\begin{equation}\label{eq:Taylor polynomials of coefficients p-1}
    \taylortrunsc{p-1}{\bm{x}_T} [\permitt] = \displaystyle\sum_{k=0}^{p-1} \permitt_k
    \qquad\text{and}\qquad
    \taylortrunsc{p-1}{\bm{x}_T} [\permeab] = \displaystyle\sum_{k=0}^{p-1} \permeab_k,
\end{equation}
where $(\permitt_k,\permeab_k)\in (\hompol{k})^2$ for each value of $k$.
Then, the local polynomial QT property in \eqref{eq:QT local space-divergence} can be rewritten as
\begin{equation}\label{eq:QT conditions rewritten}
    \begin{cases}
        \curl \funel{Q}_{k} - \imag \kappa \displaystyle\sum_{j=0}^{k-1} \permeab_j \funmg{Q}_{k-1-j} = \bm{0} \smallskip\\
        \curl \funmg{Q}_{k} + \imag \kappa \displaystyle\sum_{j=0}^{k-1} \permitt_j \funel{Q}_{k-1-j} = \bm{0} 
        \smallskip \\
        %\modTC{
        \diver \left( \displaystyle\sum_{j=0}^k \permitt_j \funel{Q}_{k-j} \right) = 0
        %}
        \smallskip \\
        %\modTC{
        \diver \left( \displaystyle\sum_{j=0}^k \permeab_j \funmg{Q}_{k-j} \right) = 0
        %}
    \end{cases}
    \qquad \forall k\in\indices{1}{p}.
\end{equation}
%\IF{I added the divergence conditions to represent the full QT. This way of rewriting is never mentioned later. Maybe it would be worth it referencing it in the Algorithm . See my comment there. If not, I would still keep the equation but without a number and label.}

\subsection{Best approximation properties 
%- Second attempt
}
\label{ssec:APPProp}

The standard polynomial space achieves the optimal algebraic best approximation order, and Taylor polynomials realize that order locally. %Taylor polynomials give locally near-optimal approximations of smooth functions, in terms of rate of convergence, and as a consequence standard polynomial spaces enjoy optimal-order best approximation properties.
The key property of the QT space associated with a given PDE is that it includes a particular Taylor polynomial of smooth exact PDE solutions. 
As a consequence, the QT space $\QTopt{p}(T)$ will also achieve the optimal algebraic approximation order.

\begin{proposition}[Taylor polynomial and quasi-Trefftz space]\label{prop:TaylinQT}
     Given $p\in\mathbb N$, let $T\subset \mathbb R^3$ be an open set, $\bm{x}_T \in T$, and suppose that $\varepsilon, \mu\in C^{p}(T; \mathbb C)$. 
    Consider the local polynomial QT space $\QTopt{p}(T)$.
    If $(\el, \mg) \in C^{p+1}(T; \mathbb C^3) \times C^{p+1}(T; \mathbb C^3)$ satisfies the Maxwell system \eqref{eq:first-order Maxwell}, then 
    $$\left(\taylortrun{p}{\ \bm{x}_T} [\el], \taylortrun{p}{\ \bm{x}_T} [\mg]\right) \in \QTopt{p}(T).$$
\end{proposition}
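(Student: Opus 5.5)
The plan is to check the four conditions defining $\QTopt{p}(T)$ directly, using two elementary facts about Taylor polynomials at $\bm{x}_T$. Clearly $\taylortrun{p}{\ \bm{x}_T}[\el]$ and $\taylortrun{p}{\ \bm{x}_T}[\mg]$ lie in $(\pol{p})^3$, so only the quasi-Trefftz property of order $p$ needs to be verified. Set $\bm{Q}^E = \taylortrun{p}{\ \bm{x}_T}[\el]$ and $\bm{Q}^B = \taylortrun{p}{\ \bm{x}_T}[\mg]$.

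The first fact is that differentiation lowers the truncation degree. For $f\in C^{p+1}$, or more generally $C^{p}$, and any $j$, we have $\partial_{x_j}\taylortrunsc{p}{\,\bm{x}_T}[f] = \taylortrunsc{p-1}{\,\bm{x}_T}[\partial_{x_j} f]$. This follows by differentiating the monomials in \eqref{eq:scalar Taylor polynomial} term by term. Hence $\curl \bm{Q}^E = \taylortrun{p-1}{\ \bm{x}_T}[\curl\el]$, and likewise for $\mg$.

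The second fact is that multiplication commutes with truncation. For $g\in C^{p}$ and $f\in C^{p}$, we have $\taylortrunsc{p-1}{\,\bm{x}_T}[g\,\taylortrunsc{p}{\,\bm{x}_T}[f]] = \taylortrunsc{p-1}{\,\bm{x}_T}[g f]$. To see this, note that $f-\taylortrunsc{p}{\,\bm{x}_T}[f]$ has all derivatives of order $\le p$ vanishing at $\bm{x}_T$. By the Leibniz rule, every derivative of order $\le p-1$ of $g\,(f-\taylortrunsc{p}{\,\bm{x}_T}[f])$ then vanishes at $\bm{x}_T$; this needs $g\in C^{p-1}$, which is guaranteed since $\permitt,\permeab\in C^p$.

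With these facts, the Faraday condition is immediate:
$$\taylortrun{p-1}{\ \bm{x}_T}[\curl \bm{Q}^E - \imag\kappa\permeab \bm{Q}^B] = \taylortrun{p-1}{\ \bm{x}_T}[\curl\el - \imag\kappa\permeab\mg] = \bm{0},$$
because $\el,\mg$ satisfy \eqref{eq:first-order Maxwell} on $T\ni\bm{x}_T$. The Amp\`ere condition follows in the same way.

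The divergence conditions are the main point, since they are not among the equations \eqref{eq:first-order Maxwell}. They are derived from them: taking the divergence of the second equation gives $\imag\kappa\,\diver(\permitt\el) = -\diver\curl\mg = 0$, and $\kappa>0$ gives $\diver(\permitt\el)=0$. Likewise $\diver(\permeab\mg)=0$ from the first equation. This step requires enough smoothness, namely $\mg\in C^2$ (fine since $p\ge1$) and $\permitt\el\in C^1$, which holds.

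Next one needs $\taylortrunsc{p-1}{\,\bm{x}_T}[\diver(\permitt \bm{Q}^E)] = \taylortrunsc{p-1}{\,\bm{x}_T}[\diver(\permitt\el)]$. Here one must be careful about regularity, since only $\permitt\in C^p$ is assumed. The route is to write $\diver(\permitt\bm{Q}^E) = \grad\permitt\cdot\bm{Q}^E + \permitt\,\diver\bm{Q}^E$. The first term has $\grad\permitt\in C^{p-1}$, and the second is handled via the first fact followed by the second fact. Combining the two terms gives $\taylortrunsc{p-1}{\,\bm{x}_T}[\grad\permitt\cdot\el+\permitt\diver\el] = 0$. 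The same argument applies to $\permeab\mg$.

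I expect the only delicate part to be this bookkeeping of regularity in the product-truncation identity. Everything else reduces to linearity of $\taylortrun{p-1}{\ \bm{x}_T}$.
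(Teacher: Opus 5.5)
Your proof is correct and follows essentially the same route as the paper, which checks the four conditions through Leibniz-rule identities for derivatives at $\bm{x}_T$ of Taylor polynomials (Propositions~\ref{prop:divergence+taylor} and \ref{prop:opertors+taylor w/ coefficients}, the latter proved by the same splitting $\diver(g\bm{F})=\grad g\cdot\bm{F}+g\,\diver\bm{F}$ that you use). Two small remarks: your explicit derivation of $\diver(\permitt\el)=\diver(\permeab\mg)=0$ from \eqref{eq:first-order Maxwell} fills in a step the paper leaves implicit, and for the term $\permitt\,\taylortrunsc{p-1}{\,\bm{x}_T}[\diver\el]$ you need your second fact with an inner truncation of degree $p-1$ rather than $p$, which your Leibniz argument gives verbatim.
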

\begin{proof}
    % Given any $(\el, \mg) \in C^{p+1}(T; \mathbb C^3) \times C^{p+1}(T; \mathbb C^3)$, $(\taylortrun{p}{\ \bm{x}_T} [\el], \taylortrun{p}{\ \bm{x}_T} [\mg]) \in (\pol{p})^3\times (\pol{p})^3$ by definition, so the
     The proof relies on verifying that $\left(\taylortrun{p}{\ \bm{x}_T} [\el], \taylortrun{p}{\ \bm{x}_T} [\mg]\right)$ satisfies the four QT properties \eqref{eq:QT local space-divergence}. % the Faraday QT property, the Amp\`ere QT property, the Gauss QT property, and the divergence-free QT property.
     The following statements follow directly from Proposition \ref{prop:divergence+taylor} and \ref{prop:opertors+taylor w/ coefficients}: %Lemma \ref{lmm:minimal}:
    \begin{itemize}
        \item 
    %    {\bf The Faraday QT property.}
    %     \\Given any $\bm{i}\in\mathbb N_0^3$ such that $\lvert\bm{i}\rvert \leq p-1$, since
    % \begin{equation*}%\label{eq:approx th-div 1}
    %     \begin{split}
    %     &D^{\bm{i}} \left(\curl \taylortrun{p}{\ \bm{x}_T} [\el] - \imag \kappa \permeab \taylortrun{p}{\ \bm{x}_T} [\mg]\right) (\bm{x}_T) 
    %     \\&= D^{\bm{i}} \curl \taylortrun{p}{\ \bm{x}_T} [\el] (\bm{x}_T) - \imag \kappa D^{\bm{i}} (\permeab \taylortrun{p}{\ \bm{x}_T} [\mg]) (\bm{x}_T)
    %     \\&= D^{\bm{i}} \curl \taylortrun{p}{\ \bm{x}_T} [\el] (\bm{x}_T) - \imag \kappa \sum_{\substack{\bm{m}\in \mathbb N_0^3,\\ \bm{m}\leq\bm{i}}} \binom{\bm{i}}{\bm{m}} D^{\bm{m}} \permeab(\bm{x}_T) D^{\bm{i}-\bm{m}} \taylortrun{p}{\ \bm{x}_T} [\mg] (\bm{x}_T) \\
    %         &= D^{\bm{i}} \curl \el (\bm{x}_T) - \imag \kappa \sum_{\substack{\bm{m}\in \mathbb N_0^3,\\ \bm{m}\leq\bm{i}}} \binom{\bm{i}}{\bm{m}} D^{\bm{m}} \permeab(\bm{x}_T) D^{\bm{i}-\bm{m}} \mg (\bm{x}_T)
    %        \\
    %         &= D^{\bm{i}} \left(\curl \el  - \imag \kappa  \permeab \mg \right)(\bm{x}_T)
    %     \end{split}
    % \end{equation*}
    given any $\bm{i}\in\mathbb N_0^3$ such that $\lvert\bm{i}\rvert \leq p-1$, since
    $D^{\bm{i}} (\permeab \taylortrun{p}{\ \bm{x}_T} [\mg]) (\bm{x}_T) =  D^{\bm{i}} \left(\permeab \mg \right)(\bm{x}_T)$
    and similarly $D^{\bm{i}} (\permitt \taylortrun{p}{\ \bm{x}_T} [\el]) (\bm{x}_T) =  D^{\bm{i}} \left(\permitt \el \right)(\bm{x}_T)$, then
    $$
    \begin{cases}
        D^{\bm{i}} \left(\curl \taylortrun{p}{\ \bm{x}_T} [\el] - \imag \kappa \permeab \taylortrun{p}{\ \bm{x}_T} [\mg]\right) (\bm{x}_T)  = D^{\bm{i}} \left(\curl \el  - \imag \kappa  \permeab \mg \right)(\bm{x}_T),
        \\
        D^{\bm{i}} \left(\curl \taylortrun{p}{\ \bm{x}_T} [\mg] + \imag \kappa \permitt \taylortrun{p}{\ \bm{x}_T} [\el]\right) (\bm{x}_T)  = D^{\bm{i}} \left(\curl \mg  + \imag \kappa  \permitt \el \right)(\bm{x}_T);
    \end{cases}
    $$
        %\item 
        % {\bf The Amp\`ere QT property.}
    % Given any $\bm{i}\in\mathbb N_0^3$ such that $\lvert\bm{i}\rvert \leq p-1$, since
    % $D^{\bm{i}} (\permitt \taylortrun{p}{\ \bm{x}_T} [\el]) (\bm{x}_T) =  D^{\bm{i}} \left(\permitt \el \right)(\bm{x}_T)$, then
    % $$D^{\bm{i}} \left(\curl \taylortrun{p}{\ \bm{x}_T} [\mg] + \imag \kappa \permitt \taylortrun{p}{\ \bm{x}_T} [\el]\right) (\bm{x}_T)  = D^{\bm{i}} \left(\curl \mg  + \imag \kappa  \permitt \el \right)(\bm{x}_T).$$
        \item 
        %{\bf The Gauss QT property.}
        given any $\bm{i}\in\mathbb N_0^3$ such that $\lvert\bm{i}\rvert \leq p-1$, then
    $$
    \begin{cases}
        D^{\bm{i}} \diver\left(\permitt \taylortrun{p}{\ \bm{x}_T} [\el]\right) (\bm{x}_T)  = D^{\bm{i}} \diver\left( \permitt \el \right)(\bm{x}_T),
        \\
        D^{\bm{i}} \diver\left(\permeab \taylortrun{p}{\ \bm{x}_T} [\mg]\right) (\bm{x}_T)  = D^{\bm{i}} \diver\left( \permeab \mg \right)(\bm{x}_T).
    \end{cases}$$
    %     \item 
    %     %{\bf The Gauss QT property for magnetism.}
    %     Given any $\bm{i}\in\mathbb N_0^3$ such that $\lvert\bm{i}\rvert \leq p-1$, then
    % $$D^{\bm{i}} \diver\left(\permeab \taylortrun{p}{\ \bm{x}_T} [\mg]\right) (\bm{x}_T)  = D^{\bm{i}} \diver\left( \permeab \mg \right)(\bm{x}_T).$$
    \end{itemize}
    The result then simply follows when the pair $(\el, \mg)$ satisfies the Maxwell system \eqref{eq:first-order Maxwell}. %: $(\taylortrun{p}{\ \bm{x}_T} [\el], \taylortrun{p}{\ \bm{x}_T} [\mg]) \in \QTopt{p}(T)$.
\end{proof}

%\lmig{not sure this sentence is really useful} The best approximation properties of QT spaces are as follows.
\begin{theorem}[Best approximation properties.]\label{th:approximation properties QTopt}
    Given $p\in\mathbb N$, let $T\subset \mathbb R^3$ be an open set, $\bm{x}_T\in T$ be such that $T$ is star-shaped about $\bm{x}_T$, and suppose that $\varepsilon, \mu\in C^{p}(T; \mathbb C)$. 
    Consider the local polynomial QT space $\QTopt{p}(T)$. % the defined by \eqref{eq:QT local space-divergence},
    If $(\el, \mg) \in C^{p+1}(T; \mathbb C^3) \times C^{p+1}(T; \mathbb C^3)$ satisfies the Maxwell system \eqref{eq:first-order Maxwell}, then 
    \begin{equation*}
        \begin{split}
            \inf_{(\funel{Q}, \funmg{Q})\in\QTopt{p}(T)} \Bigl(\left\lvert \el - \funel{Q}\right\rvert_{C^0(T)} &+ \left\lvert \mg - \funmg{Q} \right\rvert_{C^0(T)}\Bigr)  \\
            % \leq&\ \lvert \el - \taylortrun{p}{\,\,\bm{x}_T} [\el] \rvert_{C^s(T)} + \lvert \mg - \taylortrun{p}{\,\,\bm{x}_T} [\mg] \rvert_{C^s(T)} \\
            \leq&\ \frac{3^{p+1}}{(p+1)!} h_T^{p+1} \Bigl( \left\lvert\el\right\rvert_{C^0(T)} + \left\lvert\mg\right\rvert_{C^0(T)} \Bigr),
        \end{split}
    \end{equation*}
    and for all $s\in\mathbb N, s\leq p$
    \begin{equation*}
        \begin{split}
            \inf_{(\funel{Q}, \funmg{Q})\in\QTopt{p}(T)} \Bigl(\left\lvert \el - \funel{Q}\right\rvert_{C^s(T)} &+ \left\lvert \mg - \funmg{Q} \right\rvert_{C^s(T)}\Bigr)  \\
            % \leq&\ \lvert \el - \taylortrun{p}{\,\,\bm{x}_T} [\el] \rvert_{C^s(T)} + \lvert \mg - \taylortrun{p}{\,\,\bm{x}_T} [\mg] \rvert_{C^s(T)} \\
            \leq&\ \frac{3^{p+1-s}}{(p+1-s)!} h_T^{p+1-s} \Bigl( \left\lvert\el\right\rvert_{C^s(T)} + \left\lvert\mg\right\rvert_{C^s(T)} \Bigr)
        \end{split}
    \end{equation*}
    %\label{it:approximation QTopt 2}
    %\IF{Put the case $s=0$ to a separate point as (ii) and then the current second to (iii)}
    % \lmig{I suppose your comment would adjust here as: split this in two statements to have the s=0 case apart from the rest}
\end{theorem}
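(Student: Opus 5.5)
The plan is to avoid any direct analysis of $\QTopt{p}(T)$ and instead pick one explicit competitor in the infimum. By Proposition~\ref{prop:TaylinQT}, since $(\el,\mg)\in C^{p+1}(T;\mathbb C^3)^2$ solves \eqref{eq:first-order Maxwell} and $\permitt,\permeab\in C^p(T;\mathbb C)$, the pair $\bigl(\taylortrun{p}{\ \bm{x}_T}[\el],\taylortrun{p}{\ \bm{x}_T}[\mg]\bigr)$ belongs to $\QTopt{p}(T)$. Hence each infimum is bounded above by
\[
\bigl\lvert \el-\taylortrun{p}{\ \bm{x}_T}[\el]\bigr\rvert_{C^s(T)}+\bigl\lvert \mg-\taylortrun{p}{\ \bm{x}_T}[\mg]\bigr\rvert_{C^s(T)}, \qquad 0\le s\le p.
\]
The whole statement then reduces to a classical Taylor-remainder estimate for a single vector field $\bm F\in C^{p+1}(T;\mathbb C^3)$, applied once with $\bm F=\el$ and once with $\bm F=\mg$. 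Note that no vector-specific or Maxwell-specific structure enters beyond Proposition~\ref{prop:TaylinQT}.

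\textbf{Case $s=0$.} Fix $\bm x\in T$. Because $T$ is star-shaped about $\bm x_T$, the segment $\bm x_T+t(\bm x-\bm x_T)$, $t\in[0,1]$, lies in $T$. Taylor's formula with integral remainder therefore applies to $\bm F$ along this segment and gives
\[
\bm F(\bm x)-\taylortrun{p}{\ \bm{x}_T}[\bm F](\bm x)=\sum_{\lvert\bm i\rvert=p+1}\frac{p+1}{\bm i!}\,(\bm x-\bm x_T)^{\bm i}\int_0^1(1-t)^p\,D^{\bm i}\bm F\bigl(\bm x_T+t(\bm x-\bm x_T)\bigr)\,dt .
\]
The steps are then as follows.
\begin{enumerate}[(i)]
\item Take Euclidean norms inside the vector-valued integral and bound $\lVert D^{\bm i}\bm F\rVert\le \lvert\bm F\rvert_{C^{p+1}(T)}$.
\item Use $(p+1)\int_0^1(1-t)^p\,dt=1$.
\item Apply the multinomial identity $\sum_{\lvert\bm i\rvert=p+1}\lvert(\bm x-\bm x_T)^{\bm i}\rvert/\bm i!=\bigl(\sum_j\lvert x_j-x_{T,j}\rvert\bigr)^{p+1}/(p+1)!$.
\item Bound each $\lvert x_j-x_{T,j}\rvert$ by $h_T$, which gives the factor $(3h_T)^{p+1}/(p+1)!$.
\end{enumerate}
This yields $3^{p+1}h_T^{p+1}/(p+1)!$ multiplied by the $(p+1)$-st seminorm of $\bm F$.

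\textbf{Case $1\le s\le p$.} For a multi-index $\bm m$ with $\lvert\bm m\rvert=s$, differentiating the Taylor expansion term by term gives the commutation identity $D^{\bm m}\taylortrun{p}{\ \bm{x}_T}[\bm F]=\taylortrun{p-s}{\ \bm{x}_T}[D^{\bm m}\bm F]$. So $D^{\bm m}\bigl(\bm F-\taylortrun{p}{\ \bm{x}_T}[\bm F]\bigr)$ is the degree-$(p-s)$ Taylor remainder of $D^{\bm m}\bm F\in C^{p+1-s}(T)$. Repeating the $s=0$ argument with $p$ replaced by $p-s$ gives the factor $3^{p+1-s}h_T^{p+1-s}/(p+1-s)!$, multiplied by $\sup_{\lvert\bm i\rvert=p+1-s}\lVert D^{\bm i}D^{\bm m}\bm F\rVert_{C^0(T)}\le\lvert\bm F\rvert_{C^{p+1}(T)}$. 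Taking the maximum over $\lvert\bm m\rvert=s$ and summing the bounds for $\el$ and $\mg$ completes the argument.

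\textbf{Expected obstacle.} The step needing the most care is the seminorm that appears on the right-hand side. The remainder argument naturally produces the $(p+1)$-st derivatives, that is $\lvert\el\rvert_{C^{p+1}(T)}+\lvert\mg\rvert_{C^{p+1}(T)}$, in every case. I read the seminorms $C^0$ and $C^s$ written in the displayed bounds as this quantity; as literally written they cannot hold, since the error must vanish for polynomial solutions of degree at most $p$ while $\lvert\el\rvert_{C^0(T)}$ need not. The only other point to check is that the star-shapedness of $T$ about $\bm x_T$ is exactly what makes the segment argument valid at every $\bm x\in T$.
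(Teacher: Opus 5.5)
Your proposal is correct and follows essentially the paper's proof. Proposition~\ref{prop:TaylinQT} reduces the infimum to the Taylor remainder, the commutation $D^{\bm m}\taylortrun{p}{\ \bm{x}_T}[\bm F]=\taylortrun{p-s}{\ \bm{x}_T}[D^{\bm m}\bm F]$ reduces $s\geq 1$ to $s=0$, and $\sum_{\lvert\bm j\rvert=n}1/\bm j!=3^n/n!$ gives the constant; your integral remainder is the honest justification of the paper's $\sup_{\bm x_*}$ bound, since no single mean-value point exists for complex vector fields.

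You are also right that the right-hand side must read $\lvert\el\rvert_{C^{p+1}(T)}+\lvert\mg\rvert_{C^{p+1}(T)}$. This is exactly what the paper's penultimate line (with $\lvert\bm i+\bm j\rvert=p+1$) produces before its final $C^s$ slip. Your stated reason does not show the literal bound fails, though: for polynomial solutions the left-hand side is $0$, so the inequality holds trivially. A genuine counterexample is the plane wave of Section~\ref{subsec:test cases} with $\kappa\to\infty$ on a fixed small $T$: the left-hand side stays at least $1$, while the literal right-hand side is a fixed small number.
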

As in the scalar case,  see for instance \cite[Theorem 2.4]{Imbert-Gerard_2025}, the following proof relies on approximation properties of the Taylor polynomials of exact solutions.
%This proof relies on standard arguments developed for scalar PDEs, see for instance \cite[Theorem 2.4]{Imbert-Gerard_2025}, and no major difficulty arises in the vector-valued case. 
\begin{proof}
Following from Proposition \ref{prop:TaylinQT}, the Taylor polynomials of degree $p$ of the exact solutions $(\el, \mg)$ satisfy:
%Proposition \ref{prop:TaylinQT} \lmig{not a fan of the word automatically: it can lead to a lot of things, not sure why this would be automatic} automatically leads to
    \begin{equation*}
        \begin{split}
            \inf_{(\funel{Q}, \funmg{Q})\in\QTopt{p}} \Bigl(\left\lvert \el - \funel{Q}\right\rvert_{C^s(T)} &+ \left\lvert \mg - \funmg{Q} \right\rvert_{C^s(T)}\Bigr)  \\
            \leq&\ \left\lvert \el - \taylortrun{p}{\,\,\bm{x}_T} [\el] \right\rvert_{C^s(T)} + \left\lvert \mg - \taylortrun{p}{\,\,\bm{x}_T} [\mg] \right\rvert_{C^s(T)}. \\
            % \leq&\ \frac{3^{p+1-s}}{(p+1-s)!} h_T^{p+1-s} \Bigl( \lvert\el\rvert_{C^s(T)} + \lvert\mg\rvert_{C^s(T)} \Bigr)
        \end{split}
    \end{equation*}
    Moreover,
    \begin{equation*}
        \begin{split}
            \left\lvert \el - \taylortrun{p}{\,\,\bm{x}_T}\right. &\left.[\el] \right\rvert_{C^s(T)} + \left\lvert \mg - \taylortrun{p}{\,\,\bm{x}_T} [\mg] \right\rvert_{C^s(T)} \\
            =&\ \max_{\bm{i}\in\mathbb N_0^3, \left\lvert\bm{i}\right\rvert = s} \left\lVert D^{\bm{i}} \el - \taylortrun{p-s}{\ \bm{x}_T} \left[D^{\bm{i}} \el\right] \right\rVert_{ C^0(T)} + \max_{\bm{i}\in\mathbb N_0^3, \left\lvert\bm{i}\right\rvert = s} \left\lVert D^{\bm{i}} \mg - \taylortrun{p-s}{\ \bm{x}_T} \left[D^{\bm{i}} \mg\right] \right\rVert_{ C^0(T)} \\
            \leq&\ \max_{\bm{i}\in\mathbb N_0^3, \left\lvert\bm{i}\right\rvert = s} \sum_{\substack{\bm{j}\in \mathbb N_0^3,\\ \left\lvert\bm{j}\right\rvert = p+1-s}} \frac{1}{\bm{j}!} \sup_{\bm{x}, \bm{x}_*\in T} \left\lVert D^{\bm{i} + \bm{j}} \el(\bm{x}_*) (\bm{x}-\bm{x}_T)^{\bm{j}} \right\rVert \\
            &\ + \max_{\bm{i}\in\mathbb N_0^3, \left\lvert\bm{i}\right\rvert = s} \sum_{\substack{\bm{j}\in \mathbb N_0^3,\\ \left\lvert\bm{j}\right\rvert = p+1-s}} \frac{1}{\bm{j}!} \sup_{\bm{x}, \bm{x}_*\in T} \left\lVert D^{\bm{i} + \bm{j}} \mg(\bm{x}_*) (\bm{x}-\bm{x}_T)^{\bm{j}} \right\rVert \\
            \leq&\ \frac{3^{p+1-s}}{(p+1-s)!} h_T^{p+1-s} \Bigl( \left\lvert\el\right\rvert_{C^s(T)} + \left\lvert\mg\right\rvert_{C^s(T)} \Bigr).
        \end{split}
    \end{equation*}
    This concludes the proof.
\end{proof}

\subsection{Construction of quasi-Trefftz polynomials}\label{subsec:construction}

%\IF{For the Algorithm, maybe it is worth to mention the equations rewritten as \eqref{eq:QT conditions rewritten}. See the magenta part as a possibility. However, I am also fine not mentioning this if you prefer.}

According to Definition \ref{def:QT local space-divergence}, a QT function is a $(\pol{p})^3\times (\pol{p})^3$ polynomial satisfying the QT property of order $p$.
%The graded structure of polynomial spaces provides a fantastic basis for the construction of QT functions, in the vector case as in the scalar case. 
%Yet in the vector case the principal part of a QT operator is not necessarily surjective as in the scalar case.
%
In Algorithm \ref{alg:QT construction} we propose an explicit procedure to construct QT polynomials, relying on key ingredients:
\begin{itemize}
    \item the graded structure of polynomial spaces, %\modTC{which allow us to rewrite the QT property as in \eqref{eq:QT conditions rewritten}},
    \item the homogeneous polynomial Helmholtz decomposition. %\modTC{, which further splits the homogeneous components into harmonic, solenoidal, and irrotational ones}.
\end{itemize}
The former leads to the QT property expression in \eqref{eq:QT conditions rewritten}, while the latter leads to well-posed curl and divergence equations.
%In this subsection, we develop an explicit algorithm for constructing polynomials in $(\pol{p})^3\times (\pol{p})^3$ that satisfy the QT property defined in \eqref{eq:QT local space-divergence}. This algorithm works layer-by-layer on spaces of homogeneous polynomials with increasing degree using the natural decomposition of $(\pol{p})^3$ as a graded space \eqref{eq:graded space decomposition}. %and has $p+1$ steps.
% and it is based on the local QT property written as in \eqref{subeq:curl E}, \eqref{subeq:curl E} and \eqref{eq:divergence with separation}.
%To prove the main results of this section, we will work under the following assumption:

\begin{algorithm}[ht]
    \caption{
    %Construction of a QT polynomial
    $\left(\funel{Q},\funmg{Q}\right) = {\tt ConstructQTpol}\left(p,\bm{x}_T,f_\permitt, f_\permeab, (\funel{H},\funmg{H})\right)$
    }\label{alg:QT construction}
    %\lmig{the expilons and mus should also be input! and review how spaces are used below}\IF{I changed the description of the input adding the missing pieces and describing the coefficients as Taylor expansions rather than functions}

    {\bf Input:} {$p\in\mathbb N$,  %$\permitt, \permeab$ satisfying Assumption~\ref{ass:coefficients}}
    $\bm{x}_T\in\mathbb R^3$,\\
    \nonl\phantom{\bf Input:}
    $f_\permitt = \displaystyle\sum_{k=0}^p \permitt_k$ and $f_\permeab = \displaystyle\sum_{k=0}^p \permeab_k$, with $(\permitt_k, \permeab_k) \in\left(\hompol{k}\right)^2$  $\forall k\in\llbracket 0,p\rrbracket$ and $\varepsilon_0\mu_0\neq 0$, \\%and $\permitt_0. \permeab_0 \neq 0$;
    \nonl\phantom{\bf Input:}
    $\left(\funel{H},\funmg{H}\right) = \left(\displaystyle\sum_{k=0}^p \funel{H}_k,   \displaystyle\sum_{k=0}^p \funmg{H}_k \right)
    %\in \harm{p}\times \harm{p}
    $
    with $(\funel{H}_k,\funmg{H}_k )\in \homharm{k}\times \homharm{k}$ $\forall k\in\llbracket 0,p\rrbracket$
    }

    \For{$k\ {\rm from}\ 1\ {\rm to}\ p$}{
        {\bf find} 
        $\left(\funel{F}_k, \funmg{F}_k\right) \in \left(\homdivfree{k}\right)^2$ such that
        \begin{gather}
            \curl \funel{F}_k = {\color{white}-}\imag \kappa \,\Biggl(\, \displaystyle\sum_{j=0}^{k-2} \permeab_j \left(\funmg{F}_{k-j-1} + \funmg{G}_{k-j-1} + \funmg{H}_{k-j-1} \right) + \permeab_{k-1} \funmg{H}_0 \Biggr) \label{eq:alg 1}
            \\
            \curl \funmg{F}_k = - \imag \kappa \,\Biggl(\, \displaystyle\sum_{j=0}^{k-2} \permitt_j \left(\funel{F}_{k-j-1} +\funel{G}_{k-j-1} + \funel{H}_{k-j-1} \right) + \permitt_{k-1} \funel{H}_0 \Biggr) \label{eq:alg 2}
        \end{gather}

        {\bf find}
        $\left(\funel{G}_k, \funmg{G}_k \right)\in\left(\homcurlfree{k}\right)^2$ such that
        \begin{gather}
            \diver \funel{G}_k = - (\permitt_0)^{-1} \diver \Biggl(\, \displaystyle\sum_{j=1}^{k-1} \permitt_j \left(\funel{F}_{k-j} +\funel{G}_{k-j} + \funel{H}_{k-j} \right) + \permitt_k \funel{H}_0 \Biggr) \label{eq:alg 3}
            \\
            \diver \funmg{G}_k = - (\permeab_0)^{-1} \diver \Biggl(\, \displaystyle\sum_{j=1}^{k-1} \permeab_j \left(\funmg{F}_{k-j} +\funmg{G}_{k-j} + \funmg{H}_{k-j} \right) + \permeab_k \funmg{H}_0 \Biggr) \label{eq:alg 4}
        \end{gather}
    }
    
    {\bf Output: } {$\left(\funel{Q},\funmg{Q}\right) \in (\pol{p})^3 \times (\pol{p})^3$  where
    \\
    \nonl\phantom{\bf Output:}$\funel{Q} = \funel{H}_0 +  \displaystyle\sum_{k=1}^p \left(\funel{F}_k + \funel{G}_k + \funel{H}_k\right)$ and $\funmg{Q} = \funmg{H}_0 + \displaystyle\sum_{k=1}^p \left(\funmg{F}_k + \funmg{G}_k + \funmg{H}_k\right)$
    }
\end{algorithm}

Two natural questions arise about Algorithm \ref{alg:QT construction}, and will be answered in the next theorem.
First, does the proposed procedure work?
Indeed, between spaces of homogeneous polynomials, the divergence operator is surjective, so \eqref{eq:alg 3} and \eqref{eq:alg 4} always have a solution; but the curl operator is not surjective, so \eqref{eq:alg 1} and \eqref{eq:alg 2} might not always have a solution.
Second, is the output indeed a QT polynomial? The following result answers both questions. 
% \lmig{I don't understand: saying that the answer is the next theorem is not a repetition but rather an introduction}\IF{There is a bit of repetition here about the fact that we answer the questions in the next Theorem, but maybe it is fine since it is an important result.}

As a side note, it is obvious from \eqref{eq:alg 3}-\eqref{eq:alg 4} that the algorithm cannot be applied if $\varepsilon_0 \mu_0=0$, and this is how Assumption \ref{ass:coefficients} comes into play here.

\begin{theorem}[Construction of a quasi-Trefftz polynomial]\label{th:construction QT function} 
    Algorithm~\ref{alg:QT construction} is well-defined.
    Moreover, let $T\subset \mathbb R^3$ be an open set, $\bm{x}_T\in T$, $p\in\mathbb N$,  $\left(\funel{H}, \funmg{H}\right)\in \harm{p}\times \harm{p}$ be two harmonic polynomials, and consider Maxwell's equation \eqref{eq:first-order Maxwell} with coefficients {$(\permitt, \permeab)\in \left(C^{p}(T; \mathbb C)\right)^2$}. %\modTC{$\varepsilon, \mu\in C^{p}(T; \mathbb C)$}.
    %\lmig{I kept it coherent by reverting both occurences to the more precise statement}\IF{In the previous results, we used the magenta version. I would choose the magenta version only for coherence.}
    %missing epsilon and mu -> corrected
    Under Assumption~\ref{ass:coefficients}, 
    %if $\{(\epsilon_k,\mu_k), 0\leq k\leq p\}$ are the homogeneous components of $\left(\taylortrunsc{p}{\,\bm{x}_T} [\permitt],\taylortrunsc{p}{\,\bm{x}_T} [\permeab]\right)$,  
    the output $\left(\funel{Q},\funmg{Q}\right) = {\tt ConstructQTpol}\left(p,\bm{x}_T,\taylortrunsc{p}{\,\bm{x}_T} [\permitt], \taylortrunsc{p}{\,\bm{x}_T} [\permeab], (\funel{H},\funmg{H})\right)$ is a quasi-Trefftz polynomial: $(\funel{Q}, \funmg{Q})\in \QTopt{p}(T)$. %In particular, this is the unique quasi-Trefftz function in $\QTopt{p}(T)$ with $\funel{H}$ and $\funmg{H}$ as harmonic components for $\funel{Q}$ and $\funmg{Q}$, respectively.
\end{theorem}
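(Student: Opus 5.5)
The plan is an induction on $k$, run in parallel with the loop of Algorithm~\ref{alg:QT construction}. At step $k$ we assume the components of degree $<k$ are already built and check two things: the four equations \eqref{eq:alg 1}--\eqref{eq:alg 4} are solvable, and the $k$-th line of \eqref{eq:QT conditions rewritten} holds.

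\textbf{Step 1: the divergence equations.} Equations \eqref{eq:alg 3}--\eqref{eq:alg 4} are always uniquely solvable. The right-hand sides lie in $\hompol{k-1}$, and $\permitt_0\permeab_0\neq0$ by Assumption~\ref{ass:coefficients}. Proposition~\ref{prop:bijectivity operators}(b) then gives unique $\funel{G}_k,\funmg{G}_k\in\homcurlfree{k}$.

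\textbf{Step 2: rewriting the divergence constraint.} Since $\funel{F}_k,\funel{H}_k$ are divergence-free and $\permitt_0$ is constant, \eqref{eq:alg 3} is exactly
\[
\diver\Bigl(\textstyle\sum_{j=0}^k \permitt_j \funel{Q}_{k-j}\Bigr)=0,
\qquad \funel{Q}_m=\funel{F}_m+\funel{G}_m+\funel{H}_m \ (m\ge1),\quad \funel{Q}_0=\funel{H}_0 .
\]
The same holds for $\funmg{Q}$ with $\permeab$. These are the Gauss conditions of degree $k-1$ in \eqref{eq:QT conditions rewritten}.

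\textbf{Step 3: the curl equations (main obstacle).} Write $\bm R^E_k$ for the right-hand side of \eqref{eq:alg 1}, that is, $\imag\kappa\sum_{j=0}^{k-1}\permeab_j\funmg{Q}_{k-1-j}$. By Proposition~\ref{prop:bijectivity operators}(a), a unique $\funel{F}_k\in\homdivfree{k}$ exists if and only if $\bm R^E_k\in\homdivfreeall{k-1}$. For $k=1$, $\bm R^E_1=\imag\kappa\permeab_0\funmg{H}_0$ is constant, so it is trivially divergence-free. For $k\ge2$ we need
\[
\diver \bm R^E_k=\imag\kappa\,\diver\Bigl(\textstyle\sum_{j=0}^{k-1}\permeab_j\funmg{Q}_{k-1-j}\Bigr)=0 .
\]
This is precisely the magnetic Gauss condition at degree $k-1$, established in Step 2 at iteration $k-1$. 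The ordering matters: $\funmg{G}_{k-1}$ is computed before iteration $k$, so the needed fact is already available. Symmetrically, $\diver\bm R^B_k=0$ follows from the electric Gauss condition at iteration $k-1$. The induction must therefore carry the Gauss conditions for all lower degrees, and this Gauss-to-solvability link is the crux of the argument.

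\textbf{Step 4: the Faraday and Amp\`ere conditions.} Once solvable, \eqref{eq:alg 1}--\eqref{eq:alg 2} give $\curl\funel{Q}_k=\curl\funel{F}_k=\bm R^E_k$, because $\funel{G}_k$ and $\funel{H}_k$ are curl-free. This is the Faraday line of \eqref{eq:QT conditions rewritten}, and Amp\`ere follows likewise.

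\textbf{Conclusion.} Collecting $k\in\indices{1}{p}$, the output satisfies all of \eqref{eq:QT conditions rewritten}. Two remarks close the argument. First, the Gauss conditions at degree $0$ only involve $k\ge1$ terms, so they are covered. Second, \eqref{eq:QT conditions rewritten} is equivalent to the QT property in \eqref{eq:QT local space-divergence}: the homogeneous components of degree $\le p-1$ of the Taylor-truncated expressions are obtained from the Cauchy product of $\taylortrunsc{p-1}{\bm x_T}[\permitt]$ and the polynomial components. Only $\permitt_j$ with $j\le p-1$ enter the curl equations, and up to $j=k\le p$ enter the divergence equations. This gives $(\funel{Q},\funmg{Q})\in\QTopt{p}(T)$. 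Well-definedness of the algorithm (existence and uniqueness at each step) is exactly Steps 1 and 3.
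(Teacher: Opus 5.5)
Your proposal is correct and follows essentially the same route as the paper. Both arguments get unique solvability of \eqref{eq:alg 1}--\eqref{eq:alg 4} from Proposition~\ref{prop:bijectivity operators} and $\permitt_0\permeab_0\neq 0$, with the curl right-hand sides lying in $\homdivfreeall{k-1}$ thanks to the divergence equations solved at step $k-1$. Both then read off the four QT conditions degree by degree, using that $\funel{G}_k,\funel{H}_k$ are curl-free and $\funel{F}_k,\funel{H}_k$ are divergence-free (likewise for the magnetic components).

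One point you should state precisely, which you only acknowledge implicitly: the degree-$(k-1)$ component of $\taylortrunsc{p-1}{\,\bm{x}_T}[\diver(\permitt\funel{Q})]$ comes from the Cauchy product with $\taylortrunsc{p}{\,\bm{x}_T}[\permitt]$, not $\taylortrunsc{p-1}{\,\bm{x}_T}[\permitt]$. This is exactly why $\permitt,\permeab\in C^p$ and the degree-$p$ coefficient inputs are needed.
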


% Alternative way of writing the algorithm
% \begin{algorithm}[ht]
%     \caption{Construction of QT functions}\label{alg:QT construction}
%     {\bf Input:} {$\funel{H} = \displaystyle\sum_{k=0}^p \funel{H}_k, \funmg{H} = \displaystyle\sum_{k=0}^p \funmg{H}_k \in \harm{p}$}

%     {\bf set} $\funel{Q}_0 = \funel{H}_0$ and $\funmg{Q}_0 = \funmg{H}_0$

%     \For{$k\ {\rm from}\ 1\ {\rm to}\ p$}{
%         {\bf find} 
%         $\funel{F}_k, \funmg{F}_k \in \homdivfree{k}$ such that
%         \begin{gather}
%             \curl \funel{F}_k = \imag \kappa \displaystyle\sum_{j=0}^{k-1} \permeab_j \funmg{Q}_{k-j-1} \label{eq:alg 1}
%             \\
%             \curl \funmg{F}_k = - \imag \kappa \displaystyle\sum_{j=0}^{k-1} \permitt_j \funel{Q}_{k-j-1} \label{eq:alg 2}
%         \end{gather}

%         {\bf find}
%         $\funel{G}_k, \funmg{G}_k \in\homcurlfree{k}$ such that
%         \begin{gather}
%             \diver \funel{G}_k = - (\permitt_0)^{-1} \diver \displaystyle\sum_{j=1}^{k} \permitt_j \funel{Q}_{k-j} \label{eq:alg 3}
%             \\
%             \diver \funmg{G}_k = - (\permeab_0)^{-1} \diver \displaystyle\sum_{j=1}^{k} \permeab_j \funmg{Q}_{k-j} \label{eq:alg 4}
%         \end{gather}

%         {\bf set} $\funel{Q}_k = \funel{F}_k + \funel{G}_k + \funel{H}_k$ and $\funmg{Q}_k = \funmg{F}_k + \funmg{G}_k + \funmg{H}_k$
%     }
    
%     {\bf Output: } $\funel{Q} = \displaystyle\sum_{k=0}^p \funel{Q}_k$, $\funmg{Q} = \displaystyle\sum_{k=0}^p \funmg{Q}_k \in \pol{p}$
% \end{algorithm}

\begin{proof} 
    First, the algorithm is well-defined since each algorithm equation from \eqref{eq:alg 1} to \eqref{eq:alg 4} admits a unique solution for any $k>0$: the existence and uniqueness are ensured by Proposition~\ref{prop:bijectivity operators}, the fact that $\permitt_0\permeab_0\neq 0$, and that, at each step $k\in\indices{1}{p}$, the right-hand sides of \eqref{eq:alg 1} and \eqref{eq:alg 2} both belong to $\homdivfreeall{k-1}$. 
    More precisely, regarding the latter, for $k=1$, we have $(\imag \kappa \permeab_0 \funmg{H}_0, -\imag \kappa \permitt_0 \funel{H}_0) \in %\homdivfreeall{0}\times\homdivfreeall{0} = 
    \left(\pol{0}\right)^3\times\left(\pol{0}\right)^3$, while, for $k\in\indices{2}{p}$, the right-hand sides automatically satisfy the divergence-free condition thanks to \eqref{eq:alg 3} and \eqref{eq:alg 4} satisfied at the previous step.

    Next, consider $(\funel{Q}, \funmg{Q})\in (\pol{p})^3\times (\pol{p})^3$, the output of the algorithm, to prove that it is indeed a QT polynomial by showing that it satisfies the four QT conditions defining $\QTopt{p}$. 
    In what follows, the Helmholtz decomposition comes into play repeatedly since, for all $k$ from $1$ to $p$, $(\funel{F}_k, \funel{G}_k,\funel{H}_k  )\in \homdivfree{k}\times \homcurlfree{k}\times \homharm{k}$ implies that $\curl\funel{G}_k = \bm{0}$, $\curl\funel{H}_k = \bm{0}$, $\diver\funel{F}_k = 0$ and $ \diver\funel{H}_k = {0}$. % As %the decomposition of the output, %the Taylor truncation polynomials defined in Assumption~\ref{ass:coefficients}, and 
    % the fact that $\curl\funel{G}_k = \curl\funel{H}_k = \bm{0}$ since $\funel{G}_k\in\homcurlfree{k}$ and $\funel{H}_k\in\homharm{k}$ for all $k\in\indices{1}{p}$, 
    In particular, we have
    \begin{equation*}
        \begin{split}
            \taylortrun{p-1}{\ \bm{x}_T} \left[\curl\funel{Q} - \imag\kappa \permeab \funmg{Q}\right] =&\ \taylortrun{p-1}{\ \bm{x}_T} \biggl[\curl\biggl(\sum_{k=0}^p \Bigl(\funel{F}_k + \funel{G}_k + \funel{H}_k \Bigr)\biggr) - \imag\kappa \permeab \sum_{k=0}^p \Bigl(\funmg{F}_k + \funmg{G}_k + \funmg{H}_k\Bigr) \biggr] \\
            =&\ \sum_{k=1}^p \curl \funel{F}_k - \imag\kappa \sum_{l=0}^{p-1} \sum_{j=0}^l \permeab_j \Bigl(\funel{F}_{l-j} + \funel{G}_{l-j} + \funel{H}_{l-j} \Bigr) \\
            =&\ \sum_{k=1}^p \biggl( \curl \funel{F}_k - \imag\kappa \sum_{j=0}^{k-1} \permeab_j \Bigl(\funel{F}_{k-j-1} + \funel{G}_{k-j-1} + \funel{H}_{k-j-1} \Bigr) \biggr).
        \end{split}
    \end{equation*}
    Then, this is equal to zero since \eqref{eq:alg 1} holds for all $k\in\indices{1}{p}$. Analogously, 
    \begin{equation*}
        \taylortrun{p-1}{\ \bm{x}_T} \left[\curl \funmg{Q} + \imag \kappa \permitt \funel{Q}\right] = \bm{0}
    \end{equation*}
    because \eqref{eq:alg 2} holds for all $k\in\indices{1}{p}$.
    Similarly, as $\permitt_0\neq 0$, we also have
    \begin{equation*}
        \begin{split}
            \taylortrunsc{p-1}{\,\bm{x}_T} \left[\diver \left(\permitt\funel{Q}\right)\right] =&\ \taylortrunsc{p-1}{\,\bm{x}_T} \biggl[\diver \biggl( \permitt \sum_{k=0}^p \left(\funel{F}_k + \funel{G}_k + \funel{H}_k\right) \biggr)\biggr] \\
            =&\ \sum_{k=1}^p \sum_{j=0}^k \diver \Bigl(\permitt_j \Bigl( \funel{F}_{k-j} + \funel{G}_{k-j} + \funel{H}_{k-j} \Bigr) \Bigr) \\
            =&\ \permitt_0 \sum_{k=1}^p \biggl( \diver \funel{G}_k + (\permitt_0)^{-1} \displaystyle\sum_{j=1}^k \diver \left(\permitt_j \left(\funel{F}_{k-j} +\funel{G}_{k-j} + \funel{H}_{k-j} \right)\right) \biggr) .
        \end{split}
    \end{equation*}
    Then, this is also equal to zero since \eqref{eq:alg 3} holds for all $k\in\indices{1}{p}$.
    Analogously, 
    \begin{equation*}
        \taylortrunsc{p-1}{\,\bm{x}_T} \left[\diver \left(\permeab\funmg{Q}\right)\right] = 0
    \end{equation*}
    follows from the facts that $\permeab_0\neq 0$ and that  \eqref{eq:alg 4} holds for all $k\in\indices{1}{p}$.
    This concludes the proof since $\left(\funel{Q}, \funmg{Q}\right)\in\QTopt{p}(T)$. %since it satisfies all four QT conditions.

%    \lmig{I don't understand the need for this paragraph} Finally, the uniqueness statement follows from the definition of the output and the uniqueness of the Helmholtz decomposition \eqref{eq:Helmholtz}. %and the fact that, by Proposition \ref{prop:surjectivity operators} and the fact that $\permitt_0\neq 0$ and $\permeab_0 \neq 0$, all the equations in Algorithm \ref{alg:QT construction} have a unique solution.
%    More in details, thanks to the Helmholtz decomposition \eqref{eq:Helmholtz}, a polynomial $\bm{Q}$ is uniquely defined by specifying for all $k\in\indices{0}{p}$ its harmonic components in $\homharm{k}$, and for all $k\in\indices{1}{p}$ its solenoidal components in $\homdivfree{k}$ and its irrotational components in $\homcurlfree{k}$. For the two polynomials $(\funel{Q}, \funmg{Q})\in (\pol{p})^3\times (\pol{p})^3$ constructed by Algorithm~\ref{alg:QT construction}, the harmonic components are fixed by the input of the algorithm, while the other components by equations \eqref{eq:alg 1}--\eqref{eq:alg 4}, each of which has a unique solution in the space $\homdivfree{k}$ or $\homcurlfree{k}$.
\end{proof}

To summarize, the procedure proposed in Algorithm \ref{alg:QT construction} is well-defined and under Assumption \ref{ass:coefficients} it produces a QT polynomial. 
While the surjectivity of the QT operator's principal part (as a reminder, this refers to the highest-order differential operator with frozen coefficients) is a fundamental building block for the construction of QT functions for scalar PDEs \cite{Imbert-Gerard_2025-2, FIG}, for a vector-valued PDE the QT operator's principal part is not always surjective. This is the case for Maxwell's equations: the curl operators are not surjective.
Instead, Algorithm \ref{alg:QT construction} strongly relies on the exactness of the polynomial de Rham complex. % \eqref{eq:De Rham complex}.
%\lmig{I think it is better to emphasize this in the abstract, introduction and/or conclusion rather than in a remark here}\IF{Maybe make the last two sentences a remark to highlight a novelty of this analysis?}

% \begin{remark}[Curl Operator] \lmig{this should not be here}
%     \IF{Maybe move this to the introduction and/or the conclusion}
%     The well-posedness of Algorithm~\ref{alg:QT construction} is not automatic, but it is ensured by the exactness of the polynomial De Rham complex \eqref{eq:De Rham complex}. This fact establishes a main difference between the scalar linear differential equation case and this vector-valued system of differential equations. In particular, it has recently been shown \cite{Imbert-Gerard_2025-2, Fontana_2025} that the surjectivity of the highest order differential operator with frozen coefficients of the equation is a fundamental brick for the construction of QT functions. While for scalar equations this is always true, see \cite[Proposition 4]{Imbert-Gerard_2025-2}, for the Maxwell system \eqref{eq:first-order Maxwell} the curl operator is not surjective on the entire polynomial space. However, the property $\text{Range}(\curl|_{\hompol{k}}) = \text{Range}(\curl|_{\homdivfree{}{k}}) = \ker (\diver) = \homdivfreeall{k}$ for all $k\in\mathbb N_0$ is sufficient to ensure existence and uniqueness of the output $(\funel{Q}, \funmg{Q})$.
% \end{remark}

\subsection{Dimension and bases}
Thanks to Algorithm \ref{alg:QT construction} and Theorem \ref{th:construction QT function}, 
we can now determine the dimension of the QT space and introduce the definition of a basis.
Practical aspects of the construction of a basis will be postponed to Section \ref{subsec:Implementation consideration}.

\begin{corollary}[Quasi-Trefftz space dimension]
\label{cor:dim}
    Let $T\subset \mathbb R^3$ be an open set, $\bm{x}_T\in T$, $p\in\mathbb N$, and consider Maxwell's equation \eqref{eq:first-order Maxwell} with coefficients {$(\permitt, \permeab)\in \left(C^{p}(T; \mathbb C)\right)^2$} %\modTC{$\varepsilon, \mu\in C^{p}(T; \mathbb C)$}.
    %\IF{In the previous results, we used the magenta version. I would choose the magenta version only for coherence.} 
    satisfying Assumption~\ref{ass:coefficients}.
    A QT polynomial is uniquely determined by its harmonic components.
    As a consequence,
    $$
    \dim \QTopt{p}(T) = 2(p+1)(p+3).
    $$
\end{corollary}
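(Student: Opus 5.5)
The plan is to show that the linear map $\Phi:\QTopt{p}(T)\to\harm{p}\times\harm{p}$, sending $(\funel{Q},\funmg{Q})$ to its harmonic components $\bigl(\sum_{k}\funel{H}_k,\sum_k\funmg{H}_k\bigr)$, is a linear bijection. Here the components are read off the homogeneous Helmholtz decomposition \eqref{eq:Helmholtz} of each $\funel{Q}_k,\funmg{Q}_k$ for $k\ge1$, with the convention $\funel{H}_0=\funel{Q}_0$ and $\funmg{H}_0=\funmg{Q}_0$. The map $\Phi$ is well defined and linear because the decomposition is unique.

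Surjectivity comes directly from Theorem~\ref{th:construction QT function}. For any $(\funel{H},\funmg{H})\in\harm{p}\times\harm{p}$, the output of ${\tt ConstructQTpol}$ with input $\bigl(\taylortrunsc{p}{\,\bm{x}_T}[\permitt],\taylortrunsc{p}{\,\bm{x}_T}[\permeab],(\funel{H},\funmg{H})\bigr)$ lies in $\QTopt{p}(T)$. By construction of the output, its harmonic components are exactly $(\funel{H}_k,\funmg{H}_k)$, since $\funel{F}_k\in\homdivfree{k}$ and $\funel{G}_k\in\homcurlfree{k}$, and likewise for $\funmg{F}_k,\funmg{G}_k$.

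For injectivity, which is the main step, take any $(\funel{Q},\funmg{Q})\in\QTopt{p}(T)$ and write $\funel{Q}_k=\funel{F}_k+\funel{G}_k+\funel{H}_k$ and similarly for $\funmg{Q}_k$. I will show that its $\funel{F}_k,\funel{G}_k,\funmg{F}_k,\funmg{G}_k$ satisfy \eqref{eq:alg 1}--\eqref{eq:alg 4} for every $k\in\indices{1}{p}$. Taking the degree-$(k-1)$ homogeneous part of each QT condition gives the rewritten system \eqref{eq:QT conditions rewritten}.

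In the curl equations, $\curl\funel{Q}_k=\curl\funel{F}_k$ because $\funel{G}_k$ and $\funel{H}_k$ are curl-free, which yields \eqref{eq:alg 1}--\eqref{eq:alg 2}. In the divergence equations, the $j=0$ term is $\permitt_0\diver\funel{Q}_k=\permitt_0\diver\funel{G}_k$ because $\permitt_0$ is constant. Dividing by $\permitt_0\neq0$ (Assumption~\ref{ass:coefficients}) gives \eqref{eq:alg 3}, and \eqref{eq:alg 4} follows the same way.

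Some care is needed with the indexing. The degree-$(k-1)$ part of the divergence condition involves $\permitt_j$ for $j\le k$, while \eqref{eq:QT conditions rewritten} uses the Taylor polynomial of degree $p-1$ of the coefficients. I will check that only $\permitt_j$ with $j\le p$ enters, so the data match those fed to the algorithm (the $\permitt_p$ term appears only at $k=p$, as in the algorithm).

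Next I argue by induction on $k$. Proposition~\ref{prop:bijectivity operators} makes each equation \eqref{eq:alg 1}--\eqref{eq:alg 4} uniquely solvable in $\homdivfree{k}$ or $\homcurlfree{k}$ once the lower-degree components are fixed. Hence $\funel{F}_k,\funel{G}_k,\funmg{F}_k,\funmg{G}_k$ are determined by the harmonic components. In particular, if $\Phi(\funel{Q},\funmg{Q})=0$, the induction forces every component to vanish, so $\Phi$ is injective.

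Finally, Table~\ref{tab:all spaces dimensions} gives $\dim\homharm{k}=2k+3$ for $k>0$ and $\dim\homharm{0}=3$. So $\dim\harm{p}=\sum_{k=0}^p(2k+3)=(p+1)^2+2(p+1)=(p+1)(p+3)$. Doubling for the electric and magnetic components gives $\dim\QTopt{p}(T)=2(p+1)(p+3)$.
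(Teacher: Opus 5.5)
Your proposal is correct and follows essentially the same route as the paper. The paper also uses $\curl(\bm F+\bm G+\bm H)=\curl\bm F$ and $\diver(\bm F+\bm G+\bm H)=\diver\bm G$ on $\homdivfree{k}\times\homcurlfree{k}\times\homharm{k}$ to show that \eqref{eq:alg 1}--\eqref{eq:alg 4} for $k\in\indices{1}{p}$ are equivalent to the QT property, which gives $\dim\QTopt{p}(T)=\dim(\harm{p}\times\harm{p})$; you make the two directions explicit as surjectivity and injectivity of $\Phi$, and your check that the divergence conditions at $k=p$ need $\permitt_p,\permeab_p$ correctly settles an indexing point that the paper's notation in \eqref{eq:Taylor polynomials of coefficients p-1} glosses over.
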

\begin{proof}
    The fact that for all $k\in\mathbb N$ and all $(\mathbf F,\mathbf G,\mathbf H)\in \homdivfree{k} \times \homcurlfree{k} \times \homharm{k}$
    $$ 
    \curl (\mathbf F+\mathbf G+\mathbf H) = \curl \mathbf F
    \qquad\text{ and }\qquad
    \diver (\mathbf F+\mathbf G+\mathbf H) = \diver \mathbf G$$
    shows that the set of equations \eqref{eq:alg 1}-\eqref{eq:alg 2}-\eqref{eq:alg 3}-\eqref{eq:alg 4} for $k$ from $1$ to $p$ is equivalent to the QT property. 
    Hence, from Algorithm \ref{alg:QT construction}, a QT polynomial is indeed uniquely determined by its harmonic components and
    $\dim \QTopt{p}(T) = \dim (\harm{p}\times \harm{p})$.
    This concludes the proof.
\end{proof}

A basis for the QT space $\QTopt{p}(T)$ can then be constructed from any basis for $\harm{p}\times\harm{p}$ as follows.
\begin{theorem}[Quasi-Trefftz basis]\label{thm:QTbasis}
    Let $T\subset \mathbb R^3$ be an open set, $\bm{x}_T\in T$, $p\in\mathbb N$, and consider Maxwell's equation \eqref{eq:first-order Maxwell} with coefficients {$(\permitt, \permeab)\in \left(C^{p}(T; \mathbb C)\right)^2$} % \modTC{$\varepsilon, \mu\in C^{p}(T; \mathbb C)$}.
    %\IF{In the previous results, we used the magenta version. I would choose the magenta version only for coherence.}   
    satisfying Assumption \ref{ass:coefficients}.
    Given any basis $\{
    \varphi_j%(\varphi_j,\phi_j) 
    \}_{j\in \llbracket 1, 2(p+1)(p+3)\rrbracket}$ of $\harm{p}\times\harm{p}$, then 
    \begin{equation}\label{eq:1stB}
        \left\{ 
        {\left(\funel{Q}_j, \funmg{Q}_j\right) = \tt ConstructQTpol}\left(p,\bm{x}_T,\taylortrunsc{p}{\,\bm{x}_T} [\permitt], \taylortrunsc{p}{\,\bm{x}_T} [\permeab], 
        \varphi_j%\varphi_j,\phi_j
        \right)
        \right\}_{j\in \llbracket 1, 2(p+1)(p+3)\rrbracket}
    \end{equation}
    forms a basis of $\QTopt{p}(T)$.
\end{theorem}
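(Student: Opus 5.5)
The plan is to show that the map $\Phi:\harm{p}\times\harm{p}\to\QTopt{p}(T)$, $(\funel{H},\funmg{H})\mapsto {\tt ConstructQTpol}\left(p,\bm{x}_T,\taylortrunsc{p}{\,\bm{x}_T} [\permitt], \taylortrunsc{p}{\,\bm{x}_T} [\permeab], (\funel{H},\funmg{H})\right)$ is a linear bijection. Once that is established, the image of any basis $\{\varphi_j\}$ of $\harm{p}\times\harm{p}$ is a basis of $\QTopt{p}(T)$.

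First, $\Phi$ is well-defined with values in $\QTopt{p}(T)$ by Theorem~\ref{th:construction QT function}; this uses Assumption~\ref{ass:coefficients}. Next, linearity. I would prove by induction on $k\in\indices{1}{p}$ that the components $\funel{F}_k,\funmg{F}_k,\funel{G}_k,\funmg{G}_k$ produced by the algorithm depend linearly on the input $(\funel{H},\funmg{H})$. The right-hand sides of \eqref{eq:alg 1}--\eqref{eq:alg 4} are linear combinations of $\funel{H}_j,\funmg{H}_j$ and of the previously computed components, with fixed coefficients $\permitt_j,\permeab_j$. Each step then applies one of the inverse maps $(\curl|_{\homdivfree{k}})^{-1}$ or $(\diver|_{\homcurlfree{k}})^{-1}$, which are linear by Proposition~\ref{prop:bijectivity operators}. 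Hence $\Phi$ is linear.

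Injectivity follows from the uniqueness of the Helmholtz decomposition \eqref{eq:Helmholtz}. The output $\funel{Q}$ has homogeneous components $\funel{Q}_0=\funel{H}_0$ and $\funel{Q}_k=\funel{F}_k+\funel{G}_k+\funel{H}_k$ with $\funel{F}_k\in\homdivfree{k}$, $\funel{G}_k\in\homcurlfree{k}$, $\funel{H}_k\in\homharm{k}$. So the harmonic part of each homogeneous component of the output is exactly $\funel{H}_k$, and the same holds for $\funmg{Q}$. If $\Phi(\funel{H},\funmg{H})=0$, every homogeneous component vanishes, so by uniqueness of \eqref{eq:Helmholtz} all $\funel{H}_k,\funmg{H}_k$ vanish. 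Finally, $\dim\QTopt{p}(T)=2(p+1)(p+3)=\dim(\harm{p}\times\harm{p})$ by Corollary~\ref{cor:dim} and Table~\ref{tab:all spaces dimensions}. An injective linear map between spaces of equal finite dimension is bijective, and this concludes the argument.

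The main obstacle I expect is making the linearity step rigorous, since the algorithm is described procedurally via ``find'' steps rather than as an explicit map. The key point to emphasize is that uniqueness of the solutions in Proposition~\ref{prop:bijectivity operators} makes each step a composition of linear operators. The membership of the right-hand sides of \eqref{eq:alg 1}--\eqref{eq:alg 2} in $\homdivfreeall{k-1}$ was already established in the proof of Theorem~\ref{th:construction QT function}, so it can be quoted.
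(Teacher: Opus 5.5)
Your proof is correct and uses the same ingredients as the paper: the harmonic component of the output equals the input (uniqueness of the Helmholtz decomposition), the outputs lie in $\QTopt{p}(T)$ by Theorem~\ref{th:construction QT function}, and the dimension count comes from Corollary~\ref{cor:dim}. The one difference is that the paper never needs linearity of the algorithm, since applying the linear projection onto $(\harm{p})^2$ in $((\pol{p})^3)^2=(\divfree{p}\oplus\curlfree{p})^2\oplus(\harm{p})^2$ to a vanishing combination $\sum_j c_j(\funel{Q}_j,\funmg{Q}_j)$ gives $\sum_j c_j\varphi_j=0$ directly, so the linearity induction you flagged as the main obstacle can be skipped (your proof of it is nonetheless valid).
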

\begin{proof}
    First, the set defined in \eqref{eq:1stB} is linearly independent, for three reasons:
    \begin{itemize}
        \item the vector polynomial space can be decomposed as
    $\displaystyle
    \left(\left(\pol{p}\right)^3\right)^2  =  \left(\divfree{p}  \oplus  \curlfree{p} \right)^2 \oplus  \left(\harm{p}\right)^2 
    %,
    %\left(\pol{p}\right)^3  =  \left(\divfree{p}  \oplus  \curlfree{p} \right) \oplus  \harm{p} .
    %%\left(\pol{p}\right)^3 \times \left(\pol{p}\right)^3  =  \left(\divfree{p}  \oplus  \curlfree{p} \right)^2 \oplus  \left(\harm{p}\right)^2.
    $;
    \item $\varphi_j$ is the $\left(\harm{p}\right)^2$-component of $\left(\funel{Q}_j, \funmg{Q}_j\right)$;
    \item the set $\{
    \varphi_j%(\varphi_j,\phi_j) 
    \}_{j\in\indices{1}{2(p+1)(p+3)}}$ is linearly independent.
    \end{itemize}
    So the set has $2(p+1)(p+3)$ linearly independent elements. They all belong to $\QTopt{p}(T)$ from Theorem \ref{th:construction QT function}. According to Corollary \ref{cor:dim}, $2(p+1)(p+3)$ is also the dimension $\QTopt{p}(T)$. Hence the set indeed forms a basis for $\QTopt{p}(T)$.
\end{proof}

\begin{figure}[tb]
    \centering
    \begin{minipage}{0.40\textwidth}
        % This file was created with tikzplotlib v0.10.1.
\begin{tikzpicture}[scale=0.85]

\definecolor{darkgray176}{RGB}{176,176,176}
\definecolor{darkorange25512714}{RGB}{255,127,14}
\definecolor{forestgreen4416044}{RGB}{44,160,44}
\definecolor{steelblue31119180}{RGB}{31,119,180}

\begin{axis}[
legend style={
  fill opacity=.7,
  draw opacity=1,
  text opacity=1,
  at={(0.05,0.95)},
  anchor=north west,
  draw=white!10!black
},
legend cell align={left},
tick align=outside,
tick pos=left,
x grid style={darkgray176},
xlabel={$p$},
xmajorgrids,
xmin=0.3, xmax=15.7,
xtick style={color=black},
y grid style={darkgray176},
ylabel={Dimension},
ymajorgrids,
ymin=-228, ymax=5140,
ytick={0, 1000, 2000, 3000, 4000, 5000},
yticklabels={$0$, $10^3$, $2\cdot 10^3$, $3\cdot 10^3$, $4\cdot 10^3$, $5\cdot 10^3$},
ytick style={color=black}
]
\addplot [semithick, steelblue31119180, mark=*, mark size=2, mark options={solid}]
table {%
1 16
2 30
3 48
4 70
5 96
6 126
7 160
8 198
9 240
10 286
11 336
12 390
13 448
14 510
15 576
};
\addlegendentry{$\dim\QTopt{p}(T)$}
\addplot [semithick, darkorange25512714, mark=*, mark size=2, mark options={solid}]
table {%
1 18
2 36
3 60
4 90
5 126
6 168
7 216
8 270
9 330
10 396
11 468
12 546
13 630
14 720
15 816
};
\addlegendentry{$\dim\QT{p}(T)$}
\addplot [semithick, forestgreen4416044, mark=*, mark size=2, mark options={solid}]
table {%
1 24
2 60
3 120
4 210
5 336
6 504
7 720
8 990
9 1320
10 1716
11 2184
12 2730
13 3360
14 4080
15 4896
};
\addlegendentry{$\dim ((\pol{p})^3 \times (\pol{p})^3)$}
\end{axis}

\end{tikzpicture}%{Figures/dimensions_plot.tikz}
    \end{minipage}
    \hfill
    \begin{minipage}{0.45\textwidth}
        \vspace{-1.3cm}
        \begin{tabularx}{\textwidth}{cccc}
            \toprule
            $p$ & $\QTopt{p}(T)$ & $\QT{p}(T)$ & $(\pol{p})^3\times (\pol{p})^3$ \\
            \midrule
            1 & 16 & 18 & 24 \\
            2 & 30 & 36 & 60 \\
            3 & 48 & 60 & 120 \\
            4 & 70 & 90 & 210 \\
            5 & 96 & 126 & 336 \\
            10 & 286 & 396 & 1716 \\
            15 & 576 & 816 & 4896 \\
            \bottomrule
        \end{tabularx}
    \end{minipage}

    % \smallskip
    % \centering
    % \begin{tabularx}{\textwidth}{c*{15}{>{\centering\arraybackslash}X}}
    % \toprule
    % $p$ & 1 & 2 & 3 & 4 & 5 & 6 & 7 & 8 & 9 & 10 & 11 & 12 & 13 & 14 & 15 \\
    % \midrule
    % $\QT{p}(T)$ & 18 & 36 & 60 & 90 & 126 & 168 & 216 & 270 & 330 & 396 & 468 & 546 & 630 & 720 & 816 \\
    % $\QTopt{p}(T)$ & 16 & 30 & 48 & 70 & 96 & 126 & 160 & 198 & 240 & 286 & 336 & 390 & 448 & 510 & 576 \\
    % Pol & 24 & 60 & 120 & 210 & 336 & 504 & 720 & 990 & 1320 & 1716 & 2184 & 2730 & 3360 & 4080 & 4896 \\
    % \bottomrule
    % \end{tabularx}
    
    \caption{Space dimensions for $p\in\indices{1}{15}$: $\dim\QTopt{p}(T) = 2(p+1)(p+3)$ (Corollary~\ref{cor:dim}), $\dim \QT{p}(T) = 3(p+1)(p+2)$ (Corollary~\ref{th:space dimension}), and $\dim\left((\pol{p})^3 \times (\pol{p})^3\right) = (p+1)(p+2)(p+3)$.}
    \label{fig:dimensions}
\end{figure}

% \begin{table}[tb]
%     \centering
%     \begin{tabularx}{\textwidth}{c*{15}{>{\centering\arraybackslash}X}}
%     \toprule
%     $p$ & 1 & 2 & 3 & 4 & 5 & 6 & 7 & 8 & 9 & 10 & 11 & 12 & 13 & 14 & 15 \\
%     \midrule
%     $\QT{p}(T)$ & 18 & 36 & 60 & 90 & 126 & 168 & 216 & 270 & 330 & 396 & 468 & 546 & 630 & 720 & 816 \\
%     $\QTopt{p}(T)$ & 16 & 30 & 48 & 70 & 96 & 126 & 160 & 198 & 240 & 286 & 336 & 390 & 448 & 510 & 576 \\
%     Pol & 24 & 60 & 120 & 210 & 336 & 504 & 720 & 990 & 1320 & 1716 & 2184 & 2730 & 3360 & 4080 & 4896 \\
%     \bottomrule
%     \end{tabularx}
%     \caption{Space dimensions for $p \in \indices{1}{15}$.}
%     \label{tab:dimensions}
% \end{table}

Figure \ref{fig:dimensions} compares the dimensions of the quasi-Treffz space $\QTopt{p}(T)$  and the classical full polynomial space $(\pol{p})^3\times (\pol{p})^3$, illustrating the significant reduction in the QT case. 
%For reference, the dimension of the naive QT space $\QT{p}(T)$ is also depicted (see Definition \ref{eq:QT local space} and Corollary~\ref{th:space dimension}). % for a proof that this dimension is $3(p+1)(p+2)$.
It illustrates the significant reduction in dimension in the high-order regime, as $\dim \QTopt{p}(T) = O(p^2)$ %and $\dim \QT{p}(T) = O(p^2)$, 
while $\dim \left((\pol{p})^3\times (\pol{p})^3\right) = O(p^3)$.  
Comparison with the naive QT space $\QT{p}(T)$, see Definition \ref{eq:QT local space}, is provided for reference.

The next section turns to the dimension optimality of $\QTopt{p}(T)$.
%In addition, $\dim \QTopt{p}(T) < \dim\QT{p}(T)$ for all $p\in\mathbb N$.

% \begin{corollary}[Construction of a QT basis]
%     Let $\bm{x}_T\in T$, $p\in\mathbb N_0$ and let Assumption~\ref{ass:coefficients} hold.
%     Then, a basis of the quasi Trefftz space $\QTopt{p}(T)$ \eqref{eq:QT local space-divergence} can be constructed by choosing a basis of $\harm{p} \times \harm{p}$ and applying Algorithm~\ref{alg:QT construction} to each element of this basis.
%     % \IF{TBC}
% \end{corollary}

\section{Optimality
%of QT Spaces
}\label{sec:optimal properties}

%\lmig{instead explain that we stand in between standard pol spaces and the PDE-only qt condition}

To justify our choice in the order of the quasi-divergence conditions, consider the best approximation property discussed in Section \ref{ssec:APPProp}.
As already discussed, the best approximation property of the QT space $\QTopt{p}(T)$ relies on the fact that a certain Taylor polynomial of all smooth exact solutions to the PDE belongs to the space. 
The same Taylor polynomial of all exact solutions to the PDE clearly belongs to the standard polynomial space $(\pol{p})^3\times (\pol{p})^3$, and therefore the standard polynomial space enjoys the same best approximation property. %; but the Taylor polynomial of the same order of any smooth function also belongs to $(\pol{p})^3\times (\pol{p})^3$.
In summary, compared to standard polynomial space, the QT space can approximate only smooth solutions to the governing PDE, rather than all smooth functions, with the same order of convergence, but it does so more efficiently, as it requires fewer degrees of freedom.

% In the context of approximation of smooth solutions of a PDE, what is the smallest possible subspace of $(\pol{p})^3\times (\pol{p})^3$ that enjoys the same best approximation property?
We will now see that $\QTopt{p}(T)$ is the smallest possible subspace of $(\pol{p})^3\times (\pol{p})^3$ that enjoys the same best approximation property. In that sense, $\QTopt{p}(T)$ has an optimal dimension, and this justifies its definition.

 \subsection{Naive quasi-Trefftz space}
 For comparison, consider the naive QT space defined as follows.
 
\begin{definition}[Naive quasi-Trefftz space] %\label{def:naiveQT local space-divergence}
    Given a positive integer $p$, an open set $T\subset\Omega\subset\mathbb R^3$, a point $\bm{x}_T\in T$, assuming that $(\permitt, \permeab)\in \left(C^{p-1}(T; \mathbb C)\right)^2$, consider the first-order system \eqref{eq:first-order Maxwell}.
    The {\em naive local polynomial quasi-Trefftz space of order $p$} is
\begin{equation}\label{eq:QT local space}
    \QT{p}(T) \coloneq \left\{\left(\funel{Q}, \funmg{Q}\right) \in (\pol{p})^3\times (\pol{p})^3 \ :\ 
    \begin{aligned}
        &\taylortrun{p-1}{\ \bm{x}_T} \left[\curl \funel{Q} - \imag \kappa \permeab \funmg{Q}\right] = \bm{0} \\
        &\taylortrun{p-1}{\ \bm{x}_T} \left[\curl \funmg{Q} + \imag \kappa \permitt \funel{Q}\right] = \bm{0}
    \end{aligned}
    \right\}.
\end{equation}
\end{definition}
This definition is naive since it does not explicitly impose quasi-divergence conditions; instead, it only includes quasi-PDE conditions. However, following Proposition~\ref{prop:divergence+taylor}-\ref{it:Taylor-divergence condition}, implicit divergence conditions follow from the quasi-PDE conditions, and in fact the space can also be described as follows:
    \begin{equation*}
        \QT{p}(T) = \left\{\left(\funel{Q}, \funmg{Q}\right) \in (\pol{p})^3\times (\pol{p})^3 \ :\ 
        \begin{aligned}
            &\taylortrun{p-1}{\ \bm{x}_T} \left[\curl \funel{Q} - \imag \kappa \permeab \funmg{Q}\right] = \bm{0} \\
            &\taylortrun{p-1}{\ \bm{x}_T} \left[\curl \funmg{Q} + \imag \kappa \permitt \funel{Q}\right] = \bm{0} \\
            &\taylortrunsc{p-2}{\,\bm{x}_T} \left[\diver (\permitt\funel{Q})\right] = 0 \\
            &\taylortrunsc{p-2}{\,\bm{x}_T} \left[\diver (\permeab\funmg{Q})\right] = 0
        \end{aligned}
        \right\}.
    \end{equation*}
    Compared to the QT space $\QTopt{p}(T)$, the only difference is the order of the quasi-divergence conditions.

 From the definitions, it is obvious that 
$$
\QTopt{p}(T)\subset \QT{p}(T).
$$
The following theorem shows that the QT space $\QT{p}(T)$ 
also has the same approximation property as $\QTopt{p}(T)$, although the theorem assumes less regularity for the PDE coefficients, i.e., $\varepsilon, \mu\in C^{p-1}(T; \mathbb C)$. 

\begin{theorem}[Approximation properties of $\QT{p}(T)$]\label{th:approximation properties QT}
    Let $T\subset \mathbb R^3$ be an open set, $\bm{x}_T\in T$, $p\in\mathbb N$, %and $\QT{p}$ , 
    and suppose that $\varepsilon, \mu\in C^{p-1}(T; \mathbb C)$. If $(\el, \mg) \in C^{p+1}(T; \mathbb C^3) \times C^{p+1}(T; \mathbb C^3)$ satisfies the Maxwell system \eqref{eq:first-order Maxwell}, then 
    \begin{enumerate}[(i)]
        \item $\left(\taylortrun{p}{\ \bm{x}_T} [\el], \taylortrun{p}{\ \bm{x}_T} [\mg]\right) \in \QT{p}(T)$; \label{th-it:approximation properties QT 1}
        \item under the assumption that $T$ is star-shaped with respect to $\bm{x}_T$, \label{th-it:approximation properties QT 2}
        \begin{equation*}
            \begin{split}
                \inf_{(\funel{Q}, \funmg{Q})\in\QT{p}(T)} \Bigl(\left\lvert \el - \funel{Q}\right\rvert_{C^s(T)} &+ \left\lvert \mg - \funmg{Q} \right\rvert_{C^s(T)}\Bigr)  \\
                % \leq&\ \lvert \el - \taylortrun{p}{\,\,\bm{x}_T} [\el] \rvert_{C^s(T)} + \lvert \mg - \taylortrun{p}{\,\,\bm{x}_T} [\mg] \rvert_{C^s(T)} \\
                \leq&\ \frac{3^{p+1-s}}{(p+1-s)!} h_T^{p+1-s} \Bigl( \left\lvert\el\right\rvert_{C^s(T)} + \left\lvert\mg\right\rvert_{C^s(T)} \Bigr)
            \end{split}
        \end{equation*}
        for all $s\in\mathbb N_0$ with $s\leq p$.
    \end{enumerate}
\end{theorem}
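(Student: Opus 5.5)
The plan is to mirror the proofs of Proposition~\ref{prop:TaylinQT} and Theorem~\ref{th:approximation properties QTopt}, with only the two quasi-PDE conditions to check. The reduced regularity $\permitt,\permeab\in C^{p-1}(T;\mathbb C)$ is exactly what these two conditions need.

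For item \ref{th-it:approximation properties QT 1}, I verify the two defining conditions of $\QT{p}(T)$ through their Taylor coefficients. Fix a multi-index $\bm{i}$ with $\lvert\bm{i}\rvert\leq p-1$.
\begin{itemize}
\item \textbf{Curl term.} Since $\taylortrun{p}{\ \bm{x}_T}[\el]$ and $\el$ share all derivatives of order at most $p$ at $\bm{x}_T$, we have $D^{\bm{i}}\curl\taylortrun{p}{\ \bm{x}_T}[\el](\bm{x}_T)=D^{\bm{i}}\curl\el(\bm{x}_T)$. This involves derivatives of order at most $p$.
\item \textbf{Product term.} By the Leibniz rule,
\[
D^{\bm{i}}\bigl(\permeab\,\taylortrun{p}{\ \bm{x}_T}[\mg]\bigr)(\bm{x}_T)=\sum_{\bm{m}\leq\bm{i}}\binom{\bm{i}}{\bm{m}}D^{\bm{m}}\permeab(\bm{x}_T)\,D^{\bm{i}-\bm{m}}\taylortrun{p}{\ \bm{x}_T}[\mg](\bm{x}_T).
\]
Here $\lvert\bm{m}\rvert\leq p-1$, so $C^{p-1}$ regularity of $\permeab$ suffices. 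Also $\lvert\bm{i}-\bm{m}\rvert\leq p-1$, so the Taylor polynomial of $\mg$ can be replaced by $\mg$ itself. Hence the sum equals $D^{\bm{i}}(\permeab\mg)(\bm{x}_T)$.
\item \textbf{Conclusion.} Combining the two, $D^{\bm{i}}(\curl\taylortrun{p}{\ \bm{x}_T}[\el]-\imag\kappa\permeab\taylortrun{p}{\ \bm{x}_T}[\mg])(\bm{x}_T)=D^{\bm{i}}(\curl\el-\imag\kappa\permeab\mg)(\bm{x}_T)=\bm 0$ by \eqref{eq:first-order Maxwell}. So the Faraday quasi-Trefftz condition holds. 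The Amp\`ere condition follows identically with $\permitt$ and the roles of $\el,\mg$ swapped.
\end{itemize}
This is the content of Proposition~\ref{prop:opertors+taylor w/ coefficients}, which I would cite directly if it is stated under $C^{p-1}$ coefficients. No divergence condition needs checking, since $\QT{p}(T)$ is defined by the quasi-PDE conditions alone.

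For item \ref{th-it:approximation properties QT 2}, the infimum over $\QT{p}(T)$ is bounded by the error of the particular element $(\taylortrun{p}{\ \bm{x}_T}[\el],\taylortrun{p}{\ \bm{x}_T}[\mg])$, by item \ref{th-it:approximation properties QT 1}. The remaining Taylor remainder estimate is word for word the one in the proof of Theorem~\ref{th:approximation properties QTopt}:
\begin{itemize}
\item write $D^{\bm{i}}(\el-\taylortrun{p}{\ \bm{x}_T}[\el])=D^{\bm{i}}\el-\taylortrun{p-s}{\ \bm{x}_T}[D^{\bm{i}}\el]$ for $\lvert\bm{i}\rvert=s$;
\item apply the Lagrange form of the remainder along segments, which is valid because $T$ is star-shaped about $\bm{x}_T$;
\item bound $\sum_{\lvert\bm{j}\rvert=p+1-s}\lvert(\bm{x}-\bm{x}_T)^{\bm{j}}\rvert/\bm{j}!\leq 3^{p+1-s}h_T^{p+1-s}/(p+1-s)!$ via the multinomial theorem.
\end{itemize}
The case $s=0$ is included with the same argument. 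This step does not involve the coefficients at all.

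There is no real obstacle; the only delicate point is the regularity bookkeeping in item \ref{th-it:approximation properties QT 1}. One must check that only derivatives of $\permitt,\permeab$ of order at most $p-1$ enter, which is why $C^{p-1}$ suffices here whereas $\QTopt{p}(T)$ needed $C^p$ for the order-$(p-1)$ divergence of $\permitt\el$.
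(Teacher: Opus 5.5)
\paragraph{Part (i): a different and better-fitting route.} Your proof of (i) differs from the paper's, and yours is the one that matches the stated hypotheses. The paper proves (i) in one line, $\bigl(\taylortrun{p}{\ \bm{x}_T}[\el],\taylortrun{p}{\ \bm{x}_T}[\mg]\bigr)\in\QTopt{p}(T)\subset\QT{p}(T)$, citing Proposition~\ref{prop:TaylinQT} and Theorem~\ref{th:approximation properties QTopt}.

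Those results assume $\permitt,\permeab\in C^{p}(T;\mathbb C)$. Moreover, $\QTopt{p}(T)$ is not even defined for merely $C^{p-1}$ coefficients, because the degree-$(p-1)$ Taylor polynomial of $\diver(\permitt\funel{Q})$ involves derivatives of $\permitt$ up to order $p$. So the inclusion argument is shorter but only valid under the stronger $C^p$ assumption.

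Your direct Leibniz verification of the Faraday and Amp\`ere conditions uses only $D^{\bm{m}}\permitt(\bm{x}_T)$ and $D^{\bm{m}}\permeab(\bm{x}_T)$ with $\lvert\bm{m}\rvert\leq p-1$, so it proves (i) as stated.

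One small correction: Proposition~\ref{prop:opertors+taylor w/ coefficients} concerns $\diver\bigl(g\,\taylortrun{s}{\ \bm{x}_0}[\bm{F}]\bigr)$ with $g\in C^s$, so it is not the result to cite. What you need is the Leibniz identity used inside its proof, and your own computation already establishes it.

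\paragraph{Part (ii): the estimate you actually obtain.} For (ii) you follow the paper, but you should be explicit about what the remainder estimate produces. The derivatives that appear are $D^{\bm{i}+\bm{j}}$ with $\lvert\bm{i}+\bm{j}\rvert=p+1$. So the right-hand side you obtain is $\frac{3^{p+1-s}}{(p+1-s)!}h_T^{p+1-s}\bigl(\lvert\el\rvert_{C^{p+1}(T)}+\lvert\mg\rvert_{C^{p+1}(T)}\bigr)$, not the $C^s$ seminorms in the displayed statement.

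The displayed inequality is in fact false (the same slip appears in Theorem~\ref{th:approximation properties QTopt}), so a ``word for word'' copy of that proof cannot deliver it. Here is a counterexample.
\begin{itemize}
\item Take $\permitt=\permeab=1$, $\el=(0,0,e^{\imag\kappa x_2})^T$, $\mg=(e^{\imag\kappa x_2},0,0)^T$, and $s=0$.
\item Let $T$ be a ball of diameter $h_T<1/3$, with $\kappa h_T\geq(p+1)\pi$.
\item Suppose a univariate polynomial $P$ of degree at most $p$ satisfied $\lvert e^{\imag\kappa t}-P(t)\rvert<1$ along a diameter parallel to $\bm{e}_2$.
\item Then $P$ would not vanish there, and its argument would have to increase by more than $\kappa h_T-\pi\geq p\pi$.
\item But along a segment, the argument of a nonvanishing polynomial of degree at most $p$ changes by less than $p\pi$, since each root contributes less than $\pi$.
\item Hence the left-hand side is at least $2$, while the right-hand side equals $2(3h_T)^{p+1}/(p+1)!<2$.
\end{itemize}
So state and prove (ii) with $C^{p+1}$ seminorms.

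Finally, since $\el$ and $\mg$ are complex vector valued, there is no Lagrange intermediate point. Use the integral form of the remainder instead; it gives the same bound.
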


\begin{proof}
    (i) By the definition of the two QT spaces % spaces $\QT{p}(T)$ and $\QTopt{p}(T)$ \
    and by Theorem~\ref{th:approximation properties QTopt} we have
    \begin{equation*}
        \left(\taylortrun{p}{\ \bm{x}_T} [\el], \taylortrun{p}{\ \bm{x}_T} [\mg]\right) \in \QTopt{p}(T) \subset \QT{p}(T).
    \end{equation*}

    \noindent
    (ii) The approach is the same as the proof of Theorem~\ref{th:approximation properties QTopt}.    
    %An immediate consequence of \ref{th-it:approximation properties QT 1} is
    % \begin{equation*}
    %     \begin{split}
    %         \inf_{(\funel{Q}, \funmg{Q})\in\QT{p}} \Bigl(\lvert \el - \funel{Q}\rvert_{C^s(T)} &+ \lvert \mg - \funmg{Q} \rvert_{C^s(T)}\Bigr)  \\
    %         \leq&\ \lvert \el - \taylortrun{p}{\,\,\bm{x}_T} [\el] \rvert_{C^s(T)} + \lvert \mg - \taylortrun{p}{\,\,\bm{x}_T} [\mg] \rvert_{C^s(T)}
    %     \end{split}
    % \end{equation*}

    % \IF{To complete: Convergence rate.}
\end{proof}
Hence $\QT{p}(T)$ and $\QTopt{p}(T)$ enjoy the same best approximation properties; yet they have different dimensions, as stated by the next theorem and illustrated in Figures~\ref{fig:dimensions}.

\begin{lemma}[Naive quasi-Trefftz  space dimension]\label{th:space dimension}
    Let $T\subset \mathbb R^3$ be an open set, $\bm{x}_T\in T$, $p\in\mathbb N$, suppose that $\varepsilon, \mu\in C^{p-1}(T; \mathbb C)$ satisfy Assumption~\ref{ass:coefficients}. The dimension of the naive quasi-Trefftz space $\QT{p}(T)$ is
    \begin{equation*}
        \dim \QT{p}(T) = 3(p+1)(p+2).
    \end{equation*}
    In particular, $\dim \QT{p}(T) > \dim \QTopt{p}(T)$ for all $p\in\mathbb N$.
\end{lemma}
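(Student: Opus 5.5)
We plan to mirror the proof of Corollary~\ref{cor:dim}: the homogeneous Helmholtz decomposition splits each homogeneous component, and the naive QT conditions are solved layer by layer in the degree.

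Write $\funel{Q}_k = \funel{F}_k+\funel{G}_k+\funel{H}_k$ and $\funmg{Q}_k = \funmg{F}_k+\funmg{G}_k+\funmg{H}_k$ for $k\ge1$, and $\funel{Q}_0,\funmg{Q}_0\in\homharm{0}$. The naive QT property is the first two lines of \eqref{eq:QT conditions rewritten} for $k\in\indices{1}{p}$:
\[
\curl\funel{F}_k = \imag\kappa\sum_{j=0}^{k-1}\permeab_j\funmg{Q}_{k-1-j},\qquad
\curl\funmg{F}_k = -\imag\kappa\sum_{j=0}^{k-1}\permitt_j\funel{Q}_{k-1-j}.
\]
These equations involve only the solenoidal parts $\funel{F}_k,\funmg{F}_k$ on the left. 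The components $\funel{G}_k,\funmg{G}_k$ for $k\in\indices{1}{p}$ and $\funel{H}_k,\funmg{H}_k$ for $k\in\indices{0}{p}$ are therefore free parameters, except that they enter the right-hand sides.

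By Proposition~\ref{prop:bijectivity operators}(a), each equation has a unique solution $\funel{F}_k\in\homdivfree{k}$ (resp.\ $\funmg{F}_k$) if and only if its right-hand side lies in $\homdivfreeall{k-1}$. For $k=1$ the right-hand side is constant, hence automatically divergence-free. For $k\ge2$ the condition is
\[
\diver\Big(\sum_{j=0}^{k-1}\permeab_j\funmg{Q}_{k-1-j}\Big)=0,
\]
which is exactly the degree-$(k-2)$ homogeneous part of the magnetic Gauss quasi-condition, and similarly for the electric field. By Proposition~\ref{prop:bijectivity operators}(b) and $\permeab_0\neq0$, this condition uniquely determines $\funmg{G}_{k-1}$ from lower-degree data and the harmonic parts. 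So $\funel{G}_{k-1},\funmg{G}_{k-1}$ are determined for $k-1\in\indices{1}{p-1}$, which is consistent with the reformulation of $\QT{p}(T)$ using $\taylortrunsc{p-2}{}$.

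The main obstacle is organizing this induction cleanly. At step $k$, everything of degree below $k$ must already be fixed, and solvability of the curl equation at step $k$ must pin down the irrotational parts $\funel{G}_{k-1},\funmg{G}_{k-1}$, not merely constrain them. I would order it as follows: at step $k\in\indices{2}{p}$, first determine $\funel{G}_{k-1},\funmg{G}_{k-1}$ by the divergence equations, then $\funel{F}_k,\funmg{F}_k$ by the curl equations. This is Algorithm~\ref{alg:QT construction} shifted by one degree. The top components $\funel{G}_p,\funmg{G}_p$ are never constrained, since no curl equation of degree $p+1$ exists.

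Hence the linear map sending an element of $\QT{p}(T)$ to its free parameters $(\funel{H},\funmg{H},\funel{G}_p,\funmg{G}_p)\in(\harm{p})^2\times(\homcurlfree{p})^2$ is a bijection. Injectivity and surjectivity both follow from the unique solvability at each step, as in Corollary~\ref{cor:dim} and Theorem~\ref{thm:QTbasis}. Using Table~\ref{tab:all spaces dimensions}, $\dim\harm{p}=3+\sum_{k=1}^p(2k+3)=(p+1)(p+3)$ and $\dim\homcurlfree{p}=\tfrac12 p(p+1)$. Therefore
\[
\dim\QT{p}(T)=2(p+1)(p+3)+p(p+1)=(p+1)(3p+6)=3(p+1)(p+2).
\]
The strict inequality follows from $3(p+1)(p+2)-2(p+1)(p+3)=p(p+1)>0$ for $p\in\mathbb N$.
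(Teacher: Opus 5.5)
Your proposal is correct and follows the paper's route: the paper also runs Algorithm~\ref{alg:QT construction} with $(\funel{G}_p,\funmg{G}_p)\in\homcurlfree{p}\times\homcurlfree{p}$ as free input and drops the divergence equations at $k=p$, which gives $\dim\QT{p}(T)=\dim\QTopt{p}(T)+2\dim\homcurlfree{p}$. Your version is more explicit than the paper's terse proof: you show that solvability of each curl equation forces, rather than merely permits, the components $\funel{G}_{k-1},\funmg{G}_{k-1}$, and this is what makes the parametrization injective.
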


\begin{proof}
    Algorithm~\ref{alg:QT construction} can be modified to construct a QT function in $\QT{p}(T)$ as follows: the component $(\funel{G}_p, \funmg{G}_p)\in\homcurlfree{p}\times\homcurlfree{p}$ is fixed as input of the algorithm, and Equations \eqref{eq:alg 3}-\eqref{eq:alg 4} are removed for $k=p$. As a consequence, 
    \begin{equation*}
        \begin{split}
            \dim \QT{p}(T) =&\ \dim \QTopt{p} + 2 \dim\homcurlfree{p}
            % = 2\sum_{k=0}^p (2k+3) + p(p+1) \\ 
            % =&\ 2 p(p+1) + 6(p+1) + p(p+1) = (3p+6) (p+1) = 3 (p+1)(p+2).
            = 2(p+1)(p+3) + p(p+1) = 3(p+1)(p+2).
        \end{split}
    \end{equation*}
    This implies that $\dim \QT{p}(T) > \dim \QTopt{p}(T)$ for all $p\in\mathbb N$.
\end{proof}

So the naive space, even though it enjoys the same best approximation properties as standard polynomial space, is not the smallest possible space to do so.

\subsection{Minimal dimension}

% Note that, while both the curl $\curl$ and the divergence $\diver$ are first-order partial differential operators, but the order of the Taylor truncation in the QT condition defining $\QT{p}$  differs from the one in the divergence conditions \eqref{eq:divergence conditions}. Indeed, they are $p-1$ and $p-2$ respectively.
% Another possible choice for the definition of a QT space for the first-order Maxwell's system, inspired by \cite{Imbert-Gerard:25-Maxwell}, is

The QT space $\QTopt{p}(T)$ is smaller than $\QT{p}(T)$, because of a stronger QT property.  
However, Theorems~\ref{th:approximation properties QT} and \ref{th:approximation properties QTopt} show that the best approximation properties are preserved in the smaller space, since the $h$-convergence rates are the same. In particular, these approximation properties hold as soon as $(\taylortrun{p}{\ \bm{x}_T} [\el], \taylortrun{p}{\ \bm{x}_T} [\mg])$ is an element of the finite-dimensional space.
Consequently, a natural question would be: can we do better? Or, in other words, can we define a smaller vector space with the same best approximation properties? 
The following Theorem demonstrates that the answer is no, since $\QTopt{p}(T)$ has the optimal dimension among spaces of the general form
    \begin{equation*}
        \mathbb V_{p, q_1, q_2, q_3, q_4} \coloneqq \left\{\left(\funel{Q}, \funmg{Q}\right) \in (\pol{p})^3\times (\pol{p})^3 \ :\ 
        \begin{aligned}
            &\taylortrun{q_1}{\ \bm{x}_T} \left[\curl \funel{Q} - \imag \kappa \permeab \funmg{Q}\right] = \bm{0} \\
            &\taylortrun{q_2}{\ \bm{x}_T} \left[\curl \funmg{Q} + \imag \kappa \permitt \funel{Q}\right] = \bm{0} \\
            &\taylortrunsc{q_3}{\,\bm{x}_T} \left[\diver (\permitt\funel{Q})\right] = 0 \\
            &\taylortrunsc{q_4}{\,\bm{x}_T} \left[\diver (\permeab\funmg{Q})\right] = 0
        \end{aligned}
        \right\}
    \end{equation*}
    that share the same best approximation properties.

\begin{theorem}[Minimal space]
    Given $p\in\mathbb N$, an open set $T\subset\mathbb R^3$, and a point $\bm{x}_T\in T$, suppose that $\varepsilon, \mu\in C^{\infty}(T; \mathbb C)$ and that Assumption~\ref{ass:coefficients} holds.  Then, the dimension of the quasi-Trefftz space $\QTopt{p}(T)$ is optimal, in the sense that it is the smallest $\mathbb V_{p, q_1, q_2, q_3, q_4}$ space with the best approximation properties defined by Theorem~\ref{th:approximation properties QTopt}.
\end{theorem}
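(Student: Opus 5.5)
The plan is to characterize exactly which parameter tuples $(q_1,q_2,q_3,q_4)$ give a space $\mathbb V_{p,q_1,q_2,q_3,q_4}$ containing $\bigl(\taylortrun{p}{\ \bm{x}_T}[\el],\taylortrun{p}{\ \bm{x}_T}[\mg]\bigr)$ for every smooth solution $(\el,\mg)$ of \eqref{eq:first-order Maxwell}. Then I will show that the largest admissible constraint orders reproduce $\QTopt{p}(T)$. Since the spaces decrease as the $q_i$ increase, $\QTopt{p}(T)$ is then a subspace of every admissible $\mathbb V$, and in particular has the smallest dimension. Here "having the best approximation property" is read, as in the discussion preceding the statement, as containing the degree-$p$ Taylor polynomials of all smooth exact solutions. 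The converse direction is used only to exclude larger $q_i$: if some solution's Taylor polynomial is missing, the $O(h_T^{p+1})$ rate fails for that solution.

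\emph{Admissible values.} Proposition~\ref{prop:TaylinQT} shows that $q_i\le p-1$ for all $i$ is admissible. For $q_i\le p-1$ the space is monotone decreasing in each $q_i$, with $\mathbb V_{p,p-1,p-1,p-1,p-1}=\QTopt{p}(T)$. It remains to rule out $q_i\ge p$. Take $q_1\ge p$. The degree-$p$ homogeneous part of $\curl\taylortrun{p}{}[\el]-\imag\kappa\permeab\taylortrun{p}{}[\mg]$ equals $-\imag\kappa\sum_{j=0}^{p}\permeab_j\funmg{Q}_{p-j}$, since $\curl$ lowers degree by one and so contributes nothing at degree $p$. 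By the Maxwell equation this equals $\imag\kappa(\permeab\mg)_{p}$ minus the genuine degree-$p$ term of $\curl\el$. That term is the degree-$p$ part of $\curl\el$, i.e.\ the curl of the degree-$(p+1)$ Taylor component $\el_{p+1}$.

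I therefore need a solution for which $\curl \el_{p+1}\neq\bm 0$. The plan is to exhibit one locally. Solutions of \eqref{eq:first-order Maxwell} with smooth coefficients can be prescribed with essentially arbitrary Taylor data compatible with the QT recursion at every order. Concretely, I would run Algorithm~\ref{alg:QT construction} formally to order $p+1$, choosing nonzero $\funel{F}_{p+1}$ by picking harmonic data so that the right-hand side of \eqref{eq:alg 1} at $k=p+1$ is nonzero, for instance via a nonzero $\funmg{H}_p$. I would then invoke a Cauchy–Kovalevskaya or Borel-type argument to realize these jets by an actual smooth solution near $\bm{x}_T$. A simpler fallback in the constant-coefficient case $\permeab_j=0$ for $j>0$ is to use explicit plane waves $\el=\bm a e^{\imag\kappa\sqrt{\permitt\permeab}\,\bm d\cdot\bm x}$, where the degree-$p$ residual is visibly a nonzero multiple of $(\bm d\cdot\bm x)^p\,\bm d\times\bm a$. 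The cases $q_2,q_3,q_4\ge p$ are symmetric. For the divergence conditions, $\diver(\permitt\taylortrun{p}{}[\el])$ at degree $p$ misses $\diver(\permitt_0\el_{p+1})$, and this is nonzero for a suitable solution by Lemma~\ref{prop:surjectivity operators}-(b) surjectivity of $\diver$ on $\homcurlfree{p+1}$.

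The main obstacle is the existence step for variable coefficients. I need a genuine smooth local solution whose $(p+1)$-jet produces a nonzero residual, with only $C^\infty$ coefficients. I would first try the formal jet construction via the algorithm at order $p+1$. Its well-posedness is exactly Theorem~\ref{th:construction QT function}, and all harmonic components remain free. To pass from jets to a true solution, I would rely on the best-approximation requirement being stated for all smooth solutions. If that realization argument proves delicate, I would restrict the optimality claim to the family of coefficients for which solutions with prescribed finite jets exist, such as analytic ones via Cauchy–Kovalevskaya.
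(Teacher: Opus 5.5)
Your skeleton is the paper's. You identify $\QTopt{p}(T)=\mathbb V_{p,p-1,p-1,p-1,p-1}$, note that lowering any $q_i$ only enlarges the space, and for $q_i\ge p$ compute the degree-$p$ residual of the Taylor polynomial of an exact solution. Your residuals $-\curl\el_{p+1}=-\imag\kappa(\permeab\mg)_p$ (your intermediate sentence has a sign slip) and $-\permitt_0\diver\el_{p+1}$ are the homogeneous-component form of the paper's $-\imag\kappa D^{\bm i}(\permeab\mg)(\bm x_T)$ and $-\permitt(\bm x_T)D^{\bm i}\diver\el(\bm x_T)$, $|\bm i|=p$. The paper simply declares these ``nonzero in general''; the problems are in what you add to justify that.

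First, the divergence step is wrong as justified. For an exact solution, $\el_{p+1}$ is not at your disposal. Gauss's law $\diver(\permitt\el)=0$, a consequence of Amp\`ere's law, gives at degree $p$ the identity $\permitt_0\diver\el_{p+1}=-\sum_{j=1}^{p+1}\diver(\permitt_j\el_{p+1-j})$. So the irrotational part of $\el_{p+1}$ is forced to be the unique solution of the analogue of \eqref{eq:alg 3} at $k=p+1$. Lemma~\ref{prop:surjectivity operators}-(b) is what makes that equation solvable; it provides no freedom to make the residual nonzero. Take constant $\permitt,\permeab$, the very case where you have explicit plane waves. There every solution satisfies $\diver(\permitt\taylortrun{p}{\ \bm{x}_T}[\el])=\permitt\,\taylortrunsc{p-1}{\,\bm{x}_T}[\diver\el]=0$, and likewise for $\mg$. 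Hence every tuple with $q_1,q_2\le p-1$ and arbitrary $q_3,q_4$ is admissible, and your claimed characterization (admissible if and only if all $q_i\le p-1$) is false. Minimality survives there only because $\diver(\permitt\funel{Q})\in\pol{p-1}$, so the higher-order divergence conditions coincide with the order-$(p-1)$ ones. Your chain ``admissible $\Rightarrow$ all $q_i\le p-1$ $\Rightarrow$ $\mathbb V\supseteq\QTopt{p}(T)$'' therefore breaks and must be replaced. The paper's ``in general'' glosses over the same case.

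Second, for variable coefficients no witness is actually produced. Requiring the approximation property for all smooth solutions does not create a solution with a prescribed $(p+1)$-jet. Retreating to analytic coefficients via Cauchy--Kovalevskaya proves a weaker statement than the $C^\infty$ one. Both defects disappear if you aim for a single realization statement: every element of $\QTopt{p}(T)$ is the degree-$p$ Taylor polynomial of some smooth solution near $\bm x_T$. By Corollary~\ref{cor:dim}, it suffices to realize every harmonic datum in $\harm{p}\times\harm{p}$. Then any space containing all such Taylor polynomials contains $\QTopt{p}(T)$. This gives minimality among all $\mathbb V_{p,q_1,q_2,q_3,q_4}$, indeed among all admissible subspaces, with no case analysis in the $q_i$ and no need to exhibit nonzero residuals. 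Degenerate situations such as the constant-coefficient divergence conditions then take care of themselves. That realization result is the real content left unproved, both in your proposal and behind the paper's ``nonzero in general''.
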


\begin{proof}
    The QT space can be described as
    $\QTopt{p}(T) = \mathbb V_{p, p-1, p-1, p-1, p-1}$. %As particular cases of $\mathbb V_{p, q_1, q_2, q_3, q_4}$ spaces, %\lmig{what is the need to talk about the naive space here?} the naive spaces $\QT{p}$ are characterized by $q_1 = q_2 = p-1$ and $q_3 = q_4 = p-2$, while 
    %%\QTopt{p}$ are characterized by $q_1 = q_2 = q_3 = q_4 = p-1$. Furthermore, choosing any $q_j < p-1$, $j\in\indices{1}{4}$ will result in a space bigger than $\QTopt{p}$.
    % \begin{itemize}
    %     \item by Proposition~\ref{prop:divergence+taylor}-\ref{it:Taylor-divergence condition}, we automatically have $q_3 \geq q_2-1$ and $q_4 \geq q_1-1$;
    %     \item $\QT{p}$ is characterized by $q_1 = q_2 = p-1$ and $q_3 = q_4 = p-2$, while $\QTopt{p}$ by $q_1 = q_2 = q_3 = q_4 = p-1$;
    %     \item choosing any $q_j < p-1$, $j\in\indices{1}{4}$ will result in a space bigger than $\QTopt{p}$.
    % \end{itemize}
    Next, we show that if $q_j > p-1$ for some $j\in\indices{1}{4}$ then $(\taylortrun{p}{\ \bm{x}_T} [\el], \taylortrun{p}{\ \bm{x}_T} [\mg]) \notin \mathbb V_{p, q_1, q_2, q_3, q_4}$.
    Assume that $(\el, \mg) \in C^{p+1}(T; \mathbb C^3) \times C^{p+1}(T; \mathbb C^3)$ satisfies the Maxwell system \eqref{eq:first-order Maxwell}, and 
    fix $\bm{i}\in\mathbb N_0^3$ such that $\lvert\bm{i}\rvert = p$. 
    \begin{itemize}
        \item If $q_1 > p-1$, then Propositions~\ref{prop:divergence+taylor}-\ref{it:curl-Taylor action} and \ref{prop:multi-index derivative}-\ref{it:derivative vector Taylor} together with the Leibniz product rule give
    \begin{equation*}
        \begin{split}
            D^{\bm{i}} 
            \left(\curl \taylortrun{p}{\ \bm{x}_T} [\el] - \imag \kappa \permeab \taylortrun{p}{\ \bm{x}_T} [\mg]\right) (\bm{x}_T) =&\ D^{\bm{i}} \curl \taylortrun{p}{\ \bm{x}_T} [\el] (\bm{x}_T) - \imag \kappa D^{\bm{i}} \left(\permeab \taylortrun{p}{\ \bm{x}_T} [\mg]\right) (\bm{x}_T) \\
            =&\ -\imag \kappa D^{\bm{i}} (\permeab \mg) (\bm{x}_T)
            \neq \bm{0} \text{ in general}.
        \end{split}
    \end{equation*}
        \item If $q_2 > p-1$, then similarly
    \begin{equation*}
            D^{\bm{i}} \left(\curl \taylortrun{p}{\ \bm{x}_T} [\mg] + \imag \kappa \permitt \taylortrun{p}{\ \bm{x}_T} [\el]\right) (\bm{x}_T) = \imag \kappa D^{\bm{i}} (\permitt \el) (\bm{x}_T) \neq \bm{0} \text{ in general}.
    \end{equation*}
        \item If $q_3 > p-1$, then 
        %\lmig{this does not bring anything to the argument} proceeding as in the proof of Theorem~\ref{th:approximation properties QTopt}, 
        Proposition \ref{prop:multi-index derivative}-\ref{it:derivative scalar Taylor} and the Leibniz product rule give
    \begin{equation*}
        \begin{split}
            D^{\bm{i}} \left(\diver \left(\permitt \taylortrun{p}{\ \bm{x}_T}[\el] \right)\right) (\bm{x}_T) 
            =&\ D^{\bm{i}} \left(\grad\permitt \cdot \taylortrun{p}{\ \bm{x}_T}[\el]\right) (\bm{x}_T) + D^{\bm{i}} \left(\permitt \diver\taylortrun{p}{\ \bm{x}_T} [\el]\right) (\bm{x}_T) \\
            =&\ D^{\bm{i}} \left(\grad\permitt \cdot \el\right) (\bm{x}_T) + \sum_{\substack{\bm{m} \leq \bm{i},\\ \bm{m}\neq \bm{0}}} \binom{\bm{i}}{\bm{m}} D^{\bm{m}} \permitt(\bm{x}_T) D^{\bm{i}-\bm{m}} \diver \el (\bm{x}_T) \\
            =&\ D^{\bm{i}} (\diver (\permitt \el )) (\bm{x}_T) - \permitt (\bm{x}_T) D^{\bm{i}} \diver \el (\bm{x}_T) \\
            =&\ - \permitt (\bm{x}_T) D^{\bm{i}} \diver \el (\bm{x}_T) \neq 0 \text{ in general}.
        \end{split}
    \end{equation*}
        \item If $q_4 > p-1$, then similarly 
    \begin{equation*}
        D^{\bm{i}} \left(\diver \left(\permeab \taylortrun{p}{\ \bm{x}_T}[\mg] \right)\right) (\bm{x}_T) = - \permeab (\bm{x}_T) D^{\bm{i}} \diver \mg (\bm{x}_T) \neq 0 \text{ in general}.
    \end{equation*}
    \end{itemize}
    As a consequence, the best approximation properties do not hold for all solutions to Maxwell's system \eqref{eq:first-order Maxwell} if $q_j > p-1$ for any index $j\in\indices{1}{4}$.
\end{proof}

\section{Computational aspects}\label{subsec:Implementation consideration}

Practical aspects of the construction of QT bases are fundamental in the context of QT-DG methods. Indeed, a local basis needs to be constructed on each element of the computational mesh. 
%It is therefore particularly important to take into account practical aspects of the corresponding computations in order to limit the computational complexity . 
%
As in previous works, QT functions can naturally be constructed homogeneous components by homogeneous component, in increasing degree. This requires the resolution of polynomial differential equations, namely solutions to two curl equations and two divergence equations for each value of the degree $k$ from $1$ to $p$. It also requires a basis of harmonic vector fields as illustrated in Theorem \ref{thm:QTbasis}.

In this section, we propose two different approaches to construct individual QT functions. The first one implements Algorithm~\ref{alg:QT construction}, but requires numerical solves and the construction of a basis for $\homdivfree{k}$ and $\homcurlfree{k}$ for all $k$ from 1 to $p$ by two Singular Value Decompositions (SVDs) at each step. 
As a more computationally efficient alternative, the second one proposes a variation of that algorithm but can be implemented fully explicitly.
%\IF{As a comment later in the numerical section, in the current implementation Algorithm 2 also involves some numerical solver since the action of $\curl$ and $\diver$ is ``easy'' to implement, while the inverse is a bit more complex. Noted this, would you keep the wording ``fully explicit"? I think it is still applicable since the bases of $\homdivfreebis{k}$ and $\homcurlfreebis{k}$ are explicit, and I believe a fully explicit implementation, even if complex, would be possible.
%A side note: I checked online, and in python it is more efficient to solve two linear systems with the same basis, rather than computing the inverse matrix and then performing two matrix-vector multiplications.}
Additionally, the section ends with a discussion on the construction of full QT bases focusing on harmonic components, as these play a critical role in both approaches.

\subsection{First implementation}

In Algorithm~\ref{alg:QT construction}, solutions to the divergence and curl equations are sought, respectively, in $\homcurlfree{k}$ and $\homdivfree{k}$. 
Given bases for these spaces, one can simply assemble matrices for the operators $\diver|_{\homcurlfree{k}}$ and $\curl|_{\homdivfree{k}}$, which would be invertible, and simply solve a linear system numerically to construct the desired solutions.
To our knowledge, no explicit bases for these spaces can be found in the literature.

However, since
$$
    \begin{cases}
        \displaystyle (\hompol{k})^3 = \homdivfreeall{k} \oplus \homcurlfree{k},\\
        \displaystyle \homdivfreeall{k} = \ker \left( \diver|_{(\hompol{k})^3} \right),
    \end{cases}
    \text{ and }\qquad
    \begin{cases}
        \displaystyle (\hompol{k})^3 = \homcurlfreeall{k} \oplus \homdivfree{k},\\
        \displaystyle \homcurlfreeall{k} = \ker \left( \curl|_{(\hompol{k})^3} \right),
    \end{cases}
$$
 bases for $\homcurlfree{k}$ and $\homdivfree{k}$ can be computed, respectively, from the SVD of any matrix of the operators $\diver|_{(\hompol{k})^3}$ and $\curl|_{(\hompol{k})^3}$, as the set of right-singular vectors associated to non-zero singular values.

 \begin{remark}\label{rem:HSI independence of x0}
    %\lmig{Ilaria: are you satisfied with this remark?}
     While all spaces of homogeneous polynomials in this work depend on $\bm x_0$, see Remark \ref{rmk:indepcenter}, using canonical bases centered at $\bm x_0$ can evidence properties that are independent of $\bm x_0$, and these can be exploited for efficient computation in the context of a QT-DG method.
     Indeed, matrices of the differential operators $\diver|_{(\hompol{k})^3}$ and $\curl|_{(\hompol{k})^3}$ in canonical bases $\{(\bm X-\bm x_0)^{\bm i},|\bm i|=k\}$ are themselves independent of $\bm x_0$. For example
     $$
     \diver
     \begin{pmatrix}
     (\bm{X}-\bm{x}_0)^{4\bm e_1}\\
     \bm 0\\
     \bm 0
     \end{pmatrix}
     = 
     4(\bm{X}-\bm{x}_0)^{3\bm e_1}
     \qquad\text{ and }\qquad
     \diver 
     \begin{pmatrix}
     \bm 0\\
     (\bm{X}-\bm{x}_0)^{\bm e_1+3\bm e_2}\\
     \bm 0
     \end{pmatrix}
     = 
     3(\bm{X}-\bm{x}_0)^{\bm e_1+2\bm e_2}.
     $$
     Therefore, in order to construct QT bases at the centroid of each cell throughout a computational domain, computing only two SVDs per value of $k$ is sufficient independently of the number of mesh elements. 
 \end{remark}
 
% for $k\in\mathbb N$

% Given $\bm R\in(\hompol{k-1})^3$
%       find  $\bm{G} \in \homcurlfree{k}$ such that
% $\diver \bm{G} =\bm R$

% Given $f\in\hompol{k-1}\cap\mathcal R(\curl)$
%       find  $\bm{F} \in \homdivfree{k}$ such that
% $\curl {F} =f$

\subsection{Alternative implementation}
In order for a polynomial vector field to satisfy the QT property, it is actually not necessary to compute the $\homcurlfree{k}$- and $\homdivfree{k}$-components of the Helmholtz decomposition. Instead, it is sufficient to find any components $\left(\funel{F}_k, \funmg{F}_k\right) \in \left(\hompol{k}\right)^3 \times \left(\hompol{k}\right)^3$ and $\left(\funel{G}_k, \funmg{G}_k \right) \in \left(\hompol{k}\right)^3 \times \left(\hompol{k}\right)^3$ that satisfy, respectively, the curl and divergence equations.
This can be achieved explicitly via the definition of subspaces $\homdivfreebis{k}\subset \homdivfreeall{k}$ and $\homcurlfreebis{k}\subset\homcurlfreeall{k}$ that are, respectively, in bijection with 
$\mathcal R(\diver|_{\hompol{k}^3})$ and $\mathcal R(\curl|_{\hompol{k}^3})$ 
%\lmig{no}\IF{Shouldn't be kernel here and not range?},
and for which explicit bases are known. Indeed, then any matrices of the differential operators $\diver|_{\homdivfreebis{k}}$ and $\curl|_{\homcurlfreebis{k}}$ are invertible, they can be assembled explicitly, and solutions to the divergence and curl equations can be found respectively in $\homdivfreebis{k}$ and $\homcurlfreebis{k}$ without any SVD computation. 
{Moreover, the corresponding linear systems are trivial in relevant bases, so solutions can be found explicitly without any numerical solve and then easily expressed in canonical bases via explicit changes of bases.}
%\lmig{you chose not to do it in your code, but it is true that it can be done!}\IF{As the comment above, in practice there is a linear solver involved.}

Follows the definition of such sets of subspaces of homogeneous curl-free and divergence-free polynomials.

\begin{definition}[Curl-free and divergence-free functions]\label{def:curl-free and divergence-free functions}
    Given $k\geq 1$ and a point $\bm{x}_0\in\mathbb R^3$, consider the sets of curl-free, $\bm{\Phi}_{k, \bm{i}}$, and divergence-free, $\bm{\Psi}_{k,j, \bm{i}}$, functions defined as follows:
\begin{equation*}
    \bm{\Phi}_{k, \bm{i}} \coloneqq \begin{pmatrix}
        i_1 (\bm{X}-\bm{x}_0)^{\bm{i}-\bm{e}_1} \\
        i_2 (\bm{X}-\bm{x}_0)^{\bm{i}-\bm{e}_2} \\
        i_3 (\bm{X}-\bm{x}_0)^{\bm{i}-\bm{e}_3}
    \end{pmatrix}
    \qquad
    \forall \bm{i}\in\mathbb N_0^3, \lvert\bm{i}\rvert = k+1.
\end{equation*}
\begin{equation*}
    \bm{\Psi}_{k, 1, \bm{i}} \coloneqq \begin{pmatrix}
        (\bm{X}-\bm{x}_0)^{\bm{i}} \\ 0 \\ 0
    \end{pmatrix}
    \qquad\begin{matrix}
        \forall \bm{i}\in\mathbb N_0^3,\\ \lvert\bm{i}\rvert = k, %i_1=0,
    \end{matrix}
    \qquad\qquad\qquad
    \bm{\Psi}_{k, 2, \bm{i}} \coloneqq \begin{pmatrix}
        0 \\ (\bm{X}-\bm{x}_0)^{\bm{i}} \\ 0 
    \end{pmatrix}
    \qquad\begin{matrix}
        \forall \bm{i}\in\mathbb N_0^3,\\ \lvert\bm{i}\rvert = k, %i_2=0,
    \end{matrix} 
\end{equation*}
\begin{equation*}
    \bm{\Psi}_{k, 3, \bm{i}} \coloneqq \begin{pmatrix}
        0 \\ 0 \\ (\bm{X}-\bm{x}_0)^{\bm{i}}
    \end{pmatrix}
    \qquad\begin{matrix}
        \forall \bm{i}\in\mathbb N_0^3,\\ \lvert\bm{i}\rvert = k, %i_3=0,
    \end{matrix}   
\end{equation*}
\begin{equation*}
    \bm{\Psi}_{k, 4, \bm{i}} \coloneqq \begin{pmatrix}
        (i_3+1) (\bm{X}-\bm{x}_0)^{\bm{i}} \\ 0 \\ -i_1 (\bm{X}-\bm{x}_0)^{\bm{i}-\bm{e}_1+\bm{e}_3}
    \end{pmatrix}
    \ \begin{matrix}
        \forall \bm{i}\in\mathbb N_0^3,\\ \lvert\bm{i}\rvert = k, i_1\geq 1,
    \end{matrix}
    \quad
    \bm{\Psi}_{k, 5, \bm{i}} \coloneqq \begin{pmatrix}
        0 \\ (i_3+1) (\bm{X}-\bm{x}_0)^{\bm{i}} \\ -i_2 (\bm{X}-\bm{x}_0)^{\bm{i}-\bm{e}_2+\bm{e}_3}
    \end{pmatrix}
    \ \begin{matrix}
        \forall \bm{i}\in\mathbb N_0^3,\\ \lvert\bm{i}\rvert = k, i_2\geq 1,
    \end{matrix}
\end{equation*}
\end{definition}

\begin{remark}[Zero-components of $\bm{\Phi}_{k, \bm{i}}$]
    A component of the basis element $\bm{\Phi}_{k, \bm{i}}$ can be 0. This happens when the corresponding coefficient is equal to zero: the $l$-th component is 0 if and only if $i_l = 0$, $l\in\{1, 2, 3\}$. However, for all multi-indices $\bm{i}$ there is at least one nonzero component since $|\bm{i} |= k+1$.
\end{remark}

While subsets of these form bases, respectively, for the sets of homogeneous curl-free and divergence-free polynomials, more precisely:
\begin{equation*}
    \left\{ \bm{\Phi}_{k, \bm{i}} ,{\bm{i}\in\mathbb N_0^3, \lvert\bm{i}\rvert = k+1} \right\} \text{ forms a basis for }\homcurlfreeall{k},\text{ and }
\end{equation*}
\begin{equation*}
\begin{split}
    \cup_{{\bm{i}\in\mathbb N_0^3, \lvert\bm{i}
    \rvert = k}}\Big\{ &
    \left\{\bm{\Psi}_{k,1, \bm{i}} ,%{\bm{i}\in\mathbb N_0^3, \lvert\bm{i}\rvert = k},
    i_1=0\right\}
    \cup
    \left\{\bm{\Psi}_{k,2, \bm{i}} ,
    %{\bm{i}\in\mathbb N_0^3, \lvert\bm{i}\rvert = k}, 
    i_2=0\right\}
    \cup
    \left\{\bm{\Psi}_{k,3, \bm{i}} ,
    %{\bm{i}\in\mathbb N_0^3, \lvert\bm{i}\rvert = k}, 
    i_3=0\right\}
    \\&
    \cup
    \left\{\bm{\Psi}_{k,4, \bm{i}} ,
    %{\bm{i}\in\mathbb N_0^3, \lvert\bm{i}\rvert = k}, 
    i_1\geq 1\right\}
    \cup
    \left\{\bm{\Psi}_{k,5, \bm{i}} ,
    %{\bm{i}\in\mathbb N_0^3, \lvert\bm{i}\rvert = k}, 
    i_2\geq 1\right\}
    \Big\} \text{ forms a basis for }\homdivfreeall{k},
\end{split}
\end{equation*}
see
Lemmas 2 and 3 of~\cite{Imbert-Gerard_2025-3} for a proof and  Appendix \ref{sec:CDbases} for a formal statement,
%\IF{The proof is already in the second-order Maxwell paper, I do not think it is worth it to put it here as well}
we will now define some smaller subsets $\homdivfreebis{k}$ and $\homcurlfreebis{k}$.%then show that they are in bijection with  $\homdivfree{k}$ and $\homcurlfree{k}$, and therefore, according to Proposition \ref{prop:bijectivity operators}, in bijection with $\homdivfreeall{k-1}$ and $\hompol{k-1}$.
\begin{definition}[Alternative irrotational and solenoidal subspaces]\label{def:alternative irr and sol}
    Given $k\geq 1$ and a point $\bm{x}_0\in\mathbb R^3$, the space $\homcurlfreebis{k}$ is defined by
    $$\homcurlfreebis{k} = Span\left\{\bm{\Phi}_{k,\bm{i}},{\bm{i}\in\mathbb N_0^3, \lvert\bm{i}\rvert = k+1, i_1\geq 2} \right\},
    $$
    and the space $\homdivfreebis{k} $ is defined either by
    $$
    \homdivfreebis{1} = Span\left\{ 
    \bm{\Psi}_{1, 1, \bm{e}_3}, 
    \bm{\Psi}_{1, 1, \bm{e}_2}, 
    \bm{\Psi}_{1, 2, \bm{e}_3}
    \right\}
    % \text{ or by }
    $$
    or by
\begin{equation*}
\begin{split}
    \homdivfreebis{k} 
    = Span &\cup_{{\bm{i}\in\mathbb N_0^3, \lvert\bm{i}
    \rvert = k}}\Big\{ 
    \left\{\bm{\Psi}_{k,2, \bm{i}} ,
    %{\bm{i}\in\mathbb N_0^3, \lvert\bm{i}\rvert = k}, 
    i_1>0,i_2=0,i_3>0\right\}
    \cup
    \left\{\bm{\Psi}_{k,3, \bm{i}} ,
    %{\bm{i}\in\mathbb N_0^3, \lvert\bm{i}\rvert = k}, 
    i_1>0,i_2>0,i_3=0\right\}
    \\&\phantom{ \cup_{{\bm{i}\in\mathbb N_0^3, \lvert\bm{i}\rvert = k}}\Big\{ }
    \cup
    \left\{\bm{\Psi}_{k,4, \bm{i}} ,
    %{\bm{i}\in\mathbb N_0^3, \lvert\bm{i}\rvert = k}, 
    i_1= 1,i_2i_3\neq 0\right\}
    \cup
    \left\{\bm{\Psi}_{k,4, \bm{i}} ,
    %{\bm{i}\in\mathbb N_0^3, \lvert\bm{i}\rvert = k}, 
    i_2= 1,i_1i_3\neq 0\right\}
    \\&\phantom{ \cup_{{\bm{i}\in\mathbb N_0^3, \lvert\bm{i}\rvert = k}}\Big\{ }
    \cup
    \left\{\bm{\Psi}_{k,4, \bm{i}} ,
    %{\bm{i}\in\mathbb N_0^3, \lvert\bm{i}\rvert = k}, 
    i_3= 1,i_1i_2\neq 0\right\}
    \cup
    \left\{\bm{\Psi}_{k,4, \bm{i}} ,
    %{\bm{i}\in\mathbb N_0^3, \lvert\bm{i}\rvert = k}, 
    (i_1-1)i_2i_3\neq 0\right\}
    \\&\phantom{ \cup_{{\bm{i}\in\mathbb N_0^3, \lvert\bm{i}\rvert = k}}\Big\{ }
    \cup
    \left\{\bm{\Psi}_{k,5, \bm{i}} ,
    %{\bm{i}\in\mathbb N_0^3, \lvert\bm{i}\rvert = k}, 
    i_2= 1,i_1i_3\neq 0\right\}
    \cup
    \left\{\bm{\Psi}_{k,5, \bm{i}} ,
    %{\bm{i}\in\mathbb N_0^3, \lvert\bm{i}\rvert = k}, 
    i_3=0,i_1(i_2-1)\neq 0\right\}
    \\&\phantom{ \cup_{{\bm{i}\in\mathbb N_0^3, \lvert\bm{i}\rvert = k}}\Big\{ }
    \cup
    \left\{\bm{\Psi}_{k,5, \bm{i}} ,
    %{\bm{i}\in\mathbb N_0^3, \lvert\bm{i}\rvert = k}, 
    i_1(i_2-1)i_3\neq 0\right\}
    \Big\}
    \\&
    \cup\{\bm{\Psi}_{k,1, k\bm e_3} ,\bm{\Psi}_{k,1, k\bm e_2} ,\bm{\Psi}_{k,3, k\bm e_2} ,\bm{\Psi}_{k,4,\bm{e_1} + (k-1)\bm{e_3}},\bm{\Psi}_{k,4,\bm{e_1} + (k-1)\bm{e_2}},\bm{\Psi}_{k,5,k\bm{e_2}}
    \},
\end{split}
\end{equation*}
for all $k>1$.
\end{definition}

The following two results then provide the theoretical foundation for an alternative algorithm to compute QT polynomial functions, with no need to solve any {linear system numerically}.

\begin{theorem}[Bijective divergence operator]\label{th:I**}
    Given $k\geq 1$ and a point $\bm{x}_0\in\mathbb R^3$, the divergence operator $\mathcal D_k$ defined by
    $$
        \begin{array}{rrcl}
            \mathcal D_k: &\homcurlfreebis{k} &\to&\hompol{k-1}  \\
             & \mathbf P&\mapsto&\diver\mathbf P
        \end{array}
    $$
    is bijective.
\end{theorem}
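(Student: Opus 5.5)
Since $\homcurlfreebis{k}$ lies in $\homcurlfreeall{k}$, the map $\mathcal D_k$ is well defined with values in $\hompol{k-1}$. The plan is to compute it explicitly on the spanning set $\{\bm{\Phi}_{k,\bm{i}} : \lvert\bm{i}\rvert=k+1,\ i_1\ge 2\}$ and then compare dimensions.

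Observe that $\bm{\Phi}_{k,\bm{i}}=\grad (\bm{X}-\bm{x}_0)^{\bm{i}}$. Hence
$$
\diver\bm{\Phi}_{k,\bm{i}}=\Delta(\bm{X}-\bm{x}_0)^{\bm{i}}
= i_1(i_1-1)(\bm{X}-\bm{x}_0)^{\bm{i}-2\bm{e}_1}+i_2(i_2-1)(\bm{X}-\bm{x}_0)^{\bm{i}-2\bm{e}_2}+i_3(i_3-1)(\bm{X}-\bm{x}_0)^{\bm{i}-2\bm{e}_3}.
$$
By Remark~\ref{rmk:indepcenter}, we may take $\bm{x}_0=\bm 0$.

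\textbf{Counting.} The indices $\bm{i}$ with $\lvert\bm{i}\rvert=k+1$ and $i_1\ge2$ are in bijection with $\bm{j}=\bm{i}-2\bm{e}_1$, $\lvert\bm{j}\rvert=k-1$. So the spanning set has exactly $\dim\hompol{k-1}=\frac{k(k+1)}{2}$ elements. It therefore suffices to show that the images $\Delta \bm{X}^{\bm{j}+2\bm{e}_1}$ span, or equivalently are linearly independent in, $\hompol{k-1}$. Injectivity of $\mathcal D_k$ and linear independence of the $\bm{\Phi}_{k,\bm{i}}$ (so that $\dim\homcurlfreebis{k}=\dim\hompol{k-1}$) then follow simultaneously.

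\textbf{Triangularity (main step).} Order the monomials $\bm{X}^{\bm{j}}$ of $\hompol{k-1}$ by decreasing $j_1$, breaking ties arbitrarily. We have
$$
\Delta \bm{X}^{\bm{j}+2\bm{e}_1}=(j_1+2)(j_1+1)\bm{X}^{\bm{j}}+(\text{terms } \bm{X}^{\bm{j}+2\bm{e}_1-2\bm{e}_l},\ l=2,3).
$$
The extra terms have first exponent $j_1+2>j_1$, so they are strictly earlier in the order. Consequently the matrix of $\mathcal D_k$, in the basis $\{\bm{\Phi}_{k,\bm{j}+2\bm{e}_1}\}$ and the monomial basis, is triangular with nonzero diagonal entries $(j_1+2)(j_1+1)$. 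It is therefore invertible, and solving $\mathcal D_k\bm P=f$ is an explicit back substitution in decreasing $j_1$.

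The only care needed is in this ordering: I must check that no extra term ever lands on a monomial with the same $j_1$. This holds since $l\ne1$, and terms with $i_l<2$ simply vanish.

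From invertibility we get surjectivity onto $\hompol{k-1}$, and injectivity follows since the domain is spanned by the same number of elements. This concludes the bijectivity.
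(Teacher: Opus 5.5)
Your proof is correct and is essentially the paper's argument: the paper also counts the $\frac{k(k+1)}{2}$ generators and shows the kernel is trivial by eliminating coefficients in increasing $i_1$, which is exactly your triangularity of $\Delta \bm{X}^{\bm{j}+2\bm{e}_1}$ in the first exponent. Your bijection $\bm{i}\mapsto\bm{i}-2\bm{e}_1$ makes the count, and the independence of the generators, self-contained instead of relying on the cited basis of $\homcurlfreeall{k}$; one small slip in an aside: the explicit solve runs in increasing $j_1$, not decreasing, since $c_{\bm{j}}$ is determined by $f_{\bm{j}}$ and the coefficients with first index $j_1-2$.
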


\begin{proof}
    Since the divergence is a linear transformation from $\homcurlfreebis{k}$ to $\hompol{k}$, it is sufficient to show that (a) $\dim \homcurlfreebis{k} = \dim\hompol{k-1}$ and (b) $\ker(\mathcal D_k) = \{\bm{0}\}$.

    \noindent
    (a) The set $\{\bm{\Phi}_{k, \bm{i}}\ :\ {\bm{i}\in\mathbb N_0^3, \lvert\bm{i}\rvert = k+1, i_1\geq 2}\}$ is linearly independent since it is a subset of a basis for $\homcurlfreeall{k}$. Moreover, that basis, namely $\left\{ \bm{\Phi}_{k, \bm{i}} ,{\bm{i}\in\mathbb N_0^3, \lvert\bm{i}\rvert = k+1} \right\}$, can be expressed as the union of two disjoint sets in the form:
    $$
         \{\bm{\Phi}_{k, \bm{i}}\ :\ {\bm{i}\in\mathbb N_0^3, \lvert\bm{i}\rvert = k+1, i_1\geq 2}\}
         \cup
         \{\bm{\Phi}_{k, \bm{i}}\ :\ {\bm{i}\in\mathbb N_0^3, \lvert\bm{i}\rvert = k+1, i_1 <2}\},
    $$
    where the basis has
    $\dim\homcurlfreeall{k} = 
    \frac{1}{2} (k+2)(k+3)$ elements and $ \{\bm{\Phi}_{k, \bm{i}}\ :\ {\bm{i}\in\mathbb N_0^3, \lvert\bm{i}\rvert = k+1, i_1 <2}\}$ has $ (k+2) + (k+1)$ elements.
    As a consequence, 
    \begin{equation*}
        %\begin{split}
            \dim \homcurlfreebis{k} 
            %=&\ \dim\homcurlfreeall{k} - \# \{\bm{\Phi}_{k, \bm{i}}\ :\ {\bm{i}\in\mathbb N_0^3, \lvert\bm{i}\rvert = k+1, i_1 = 0, 1}\} \\
            =%&\ 
            \frac{1}{2} (k+2)(k+3) - (2k+3) = \frac{1}{2} k(k+1),
        %\end{split}
    \end{equation*}
    which is precisely $\dim\hompol{k-1}$.
    
    \noindent
    (b) %We conclude the proof by showing that $\ker(\diver|_{\homcurlfreebis{k}}) = \{\bm{0}\}$. 
    Since $\{\bm{\Phi}_{k, \bm{i}}\ :\ {\bm{i}\in\mathbb N_0^3, \lvert\bm{i}\rvert = k+1, i_1\geq 2}\}$ is linearly independent it forms a basis for $\homcurlfreebis{k}$.
    Let $\bm{\Phi}\in \ker(\mathcal D_k)$. Then, there exist some coefficients $c_{\bm{i}}\in\mathbb C$ such that 
    \begin{equation*}
        \bm{\Phi} = \sum_{\substack{\bm{i}\in\mathbb N_0^3, \lvert\bm{i}\rvert = k+1 \\ i_1\geq 2}} c_{\bm{i}} \bm{\Phi}_{k, \bm{i}}.
    \end{equation*}
    We now show that all the coefficients $c_{\bm{i}}$ are zero by an iterative approach on increasing values of the first scalar index $i_1$.
    For the sake of compactness, in what follows, $\bm y$ denotes $\bm x - \bm x_0$.
    By hypothesis $\diver\bm{\Phi} = 0$, so, by linearity we have
    \begin{equation}\label{eq:kernel divergence element}
        \sum_{\substack{\bm{i}\in\mathbb N_0^3, \lvert\bm{i}\rvert = k+1 \\ i_1\geq 2}} c_{\bm{i}} \left( i_1 (i_1-1) \bm{y}^{\bm{i}-2\bm{e}_1} + i_2 (i_2-1) \bm{y}^{\bm{i}-2\bm{e}_2} + i_3 (i_3-1) \bm{y}^{\bm{i}-2\bm{e}_3} \right)  = 0
        % \begin{split}
        %     \diver\bm{\Phi} =& \sum_{\substack{\bm{i}\in\mathbb N_0^3, \lvert\bm{i}\rvert = k+1 \\ i_1\geq 2}} c_{\bm{i}} \diver\bm{\Phi}_{k, \bm{i}} \\
        %     =& \sum_{\substack{\bm{i}\in\mathbb N_0^3, \lvert\bm{i}\rvert = k+1 \\ i_1\geq 2}} c_{\bm{i}} \left( i_1 (i_1-1) \bm{y}^{\bm{i}-2\bm{e}_1} + i_2 (i_2-1) \bm{y}^{\bm{i}-2\bm{e}_2} + i_3 (i_3-1) \bm{y}^{\bm{i}-2\bm{e}_3} \right)  = 0.
        % \end{split}
    \end{equation}
    %The last expression is obtained by applying the divergence operator to each basis function \eqref{eq:I_k basis}. 
    Here the second subterm $i_2 (i_2-1) \bm{y}^{\bm{i}-2\bm{e}_2}$ in this expression is zero when $i_2\in\{0, 1\}$. Analogously, the third subterm $i_3 (i_3-1) \bm{y}^{\bm{i}-2\bm{e}_3}$ is zero when $i_3\in\{0, 1\}$.
    \begin{itemize}
        \item For all $\bm{j}\in\mathbb N_0^3$ such that $\lvert\bm{j}\rvert = k-1$ and $j_1=0$, the monomial $\bm{y}^{\bm{j}}$ only appears once in the sum \eqref{eq:kernel divergence element}, because $i_1\geq 2$. This element corresponds to the index $\bm{i} = \bm{j} + 2\bm{e}_1$. Therefore, $c_{\bm{i}} = 0$ for all multi-index $\bm{i}$ such that $i_1 = 2$, and we can rewrite \eqref{eq:kernel divergence element} as
        \begin{equation*}
            \diver\bm{\Phi} = \sum_{\substack{\bm{i}\in\mathbb N_0^3, \lvert\bm{i}\rvert = k+1 \\ i_1\geq 3}} c_{\bm{i}} \left( i_1 (i_1-1) \bm{y}^{\bm{i}-2\bm{e}_1} + i_2 (i_2-1) \bm{y}^{\bm{i}-2\bm{e}_2} + i_3 (i_3-1) \bm{y}^{\bm{i}-2\bm{e}_3} \right)  = 0.
        \end{equation*}

        \item We apply a similar procedure with increasing values of $i_1$. At a generic step $k+1 \geq m\geq 3$, assume that $c_{\bm{i}} = 0$ for all $\bm{i}$ with $i_1 < m$. Then,
        \begin{equation*}
            \diver\bm{\Phi} = \sum_{\substack{\bm{i}\in\mathbb N_0^3, \lvert\bm{i}\rvert = k+1 \\ i_1\geq m}} c_{\bm{i}} \left( i_1 (i_1-1) \bm{y}^{\bm{i}-2\bm{e}_1} + i_2 (i_2-1) \bm{y}^{\bm{i}-2\bm{e}_2} + i_3 (i_3-1) \bm{y}^{\bm{i}-2\bm{e}_3} \right)  = 0.
        \end{equation*}
        For all $\bm{j}\in\mathbb N_0^3$ such that $j_1 = m$, the monomial $\bm{y}^{\bm{j}}$ only appear once in this sum, corresponding to $\bm{i} = \bm{j}+2\bm{e}_1$. Therefore, $c_{\bm{i}} = 0$ for all multi-index $\bm{i}$ such that $i_1 = m$.
    \end{itemize}
    After these $k$ steps, we get $\bm{\Phi} = \bm{0}$. Therefore $\ker(\mathcal D_k) = \{\bm{0}\}$.
\end{proof}

\begin{theorem}[Bijective curl operator]\label{th:S**}
    Given $k\geq 1$ and a point $\bm{x}_0\in\mathbb R^3$, the curl operator $\mathcal C_k$ defined by
    $$
        \begin{array}{rrcl}
            \mathcal C_1: &\homdivfreebis{1} &\to&(\hompol{0} )^3 \\
             & \mathbf P&\mapsto&\curl\mathbf P
        \end{array}
        \qquad \text{ and }\qquad
        \begin{array}{rrcl}
            \mathcal C_k: &\homdivfreebis{k} &\to&\homdivfreeall{k-1} \\
             & \mathbf P&\mapsto&\curl\mathbf P
        \end{array}
    $$
    is bijective.
\end{theorem}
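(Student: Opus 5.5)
We follow the same two-step strategy as in the proof of Theorem~\ref{th:I**}. Since $\curl$ is linear and, by the exactness of the homogeneous de Rham complex \eqref{eq:De Rham complex}, maps $(\hompol{k})^3$ into $\homdivfreeall{k-1}$ (for $k=1$ the target $(\hompol{0})^3=\homdivfreeall{0}$), it suffices to show (a) $\dim\homdivfreebis{k}=\dim\homdivfreeall{k-1}$ and (b) $\ker(\mathcal C_k)=\{\bm 0\}$.

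The case $k=1$ is a direct computation. With $\bm y=\bm x-\bm x_0$, the generators are $(y_3,0,0)$, $(y_2,0,0)$ and $(0,y_3,0)$. Their curls are $(0,1,0)$, $(0,0,-1)$ and $(-1,0,0)$ respectively. These are linearly independent and span $(\hompol{0})^3$, which gives bijectivity at once.

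For $k>1$, part (a) amounts to counting. Every generator in Definition~\ref{def:alternative irr and sol} belongs to the basis of $\homdivfreeall{k}$ recalled above (Appendix~\ref{sec:CDbases}); here we read the terms $\bm\Psi_{k,2,\bm i}$ with $i_2=0$ and $\bm\Psi_{k,3,\bm i}$ with $i_3=0$ as members of the $\Psi_2,\Psi_3$ families. The listed functions are distinct, so they are linearly independent. We then count them family by family according to the index constraints ($i_1>0,i_2=0,i_3>0$ gives $k-1$ indices, and so on), add the six extra functions, and check that the total equals $k(k+2)=\dim\homdivfreeall{k-1}$ from Table~\ref{tab:all spaces dimensions}. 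Equivalently, we identify the removed basis elements as exactly $2k+3=\dim\homharm{k}$ of them, consistent with $\dim\homdivfree{k}$.

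Part (b) is the main work. We take $\bm\Psi=\sum c\,\bm\Psi_{k,\cdot,\bm i}$ with $\curl\bm\Psi=\bm 0$ and write out the curl of each generator explicitly. For example,
$\curl\bm\Psi_{k,2,\bm i}=(-i_3\bm y^{\bm i-\bm e_3},0,i_1\bm y^{\bm i-\bm e_1})$ and $\curl\bm\Psi_{k,3,\bm i}=(i_2\bm y^{\bm i-\bm e_2},-i_1\bm y^{\bm i-\bm e_1},0)$. The curls of $\bm\Psi_{k,4,\bm i}$ and $\bm\Psi_{k,5,\bm i}$ have components such as $-i_1(i_1-1)\bm y^{\bm i-2\bm e_1+\bm e_3}$, $(i_3+1)i_2\bm y^{\bm i-\bm e_2}$, and so on. As in Theorem~\ref{th:I**}, we then argue by a triangular elimination on monomials. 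We order the unknowns, for instance by the exponent of $y_1$ (then by that of $y_2$), and look at one component of $\curl\bm\Psi$ at a time. In each such component we locate monomials that receive a contribution from only one remaining generator, conclude that its coefficient vanishes, remove it, and iterate. The natural order is:
\begin{enumerate}[(i)]
\item the third component, which isolates the $\Psi_2$ and $\Psi_5$-type terms;
\item the second component, for the $\Psi_3$ and $\Psi_4$ terms;
\item the first component, for the remaining ones;
\item finally, the six exceptional generators, which we treat by hand.
\end{enumerate}

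The main obstacle is step (b). The index set is irregular, and the six exceptional functions such as $\bm\Psi_{k,4,\bm e_1+(k-1)\bm e_3}$ may share monomials with the generic families, so the uniqueness-of-monomial argument has to be checked carefully at the boundary indices ($i_j\in\{0,1\}$). If the elimination stalls, the fallback is to argue instead that $\homdivfreebis{k}\cap\homharm{k}=\{\bm 0\}$. To do so we would show that no nonzero combination of the generators is a gradient of a harmonic polynomial, using the fact that $\homharm{k}=\grad(\text{harmonic }\hompol{k+1})$. Combined with (a) and the equality $\ker(\curl|_{\homdivfreeall{k}})=\homharm{k}$, this would give injectivity.
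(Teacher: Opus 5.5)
Your reduction is sound. Once (a) holds, injectivity and surjectivity of $\mathcal C_k$ are equivalent, and your $k=1$ computation coincides with the paper's.

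\textbf{The gap.} Step (b) for $k\ge 2$ is the entire content of the theorem, and you do not carry it out.
\begin{itemize}
\item Your fallback is not an alternative argument. Since $\homdivfreebis{k}\subset\homdivfreeall{k}$ and $\ker(\curl|_{\homdivfreeall{k}})=\homharm{k}$, one has $\ker\mathcal C_k=\homdivfreebis{k}\cap\homharm{k}$, so the fallback merely restates injectivity.
\item No general principle can replace the verification. Whether $k(k+2)$ elements of the $\bm\Psi$-basis of $\homdivfreeall{k}$ span a complement of $\homharm{k}$ depends on exactly which $2k+3$ elements are discarded. For $k=1$, with $\bm y=\bm x-\bm x_0$, the basis element $\bm\Psi_{1,4,\bm e_1}=(y_1,0,-y_3)=\grad\tfrac12(y_1^2-y_3^2)$ is harmonic, so any set of the right size that contains it fails.
\item The elimination order you sketch does not behave as claimed. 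The third component of $\curl\bm\Psi_{k,1,k\bm e_2}$ is $-k\bm y^{(k-1)\bm e_2}$, and that of $\curl\bm\Psi_{k,4,\bm i}$ is $-(i_3+1)i_2\bm y^{\bm i-\bm e_2}$. So the third component does not isolate the $\bm\Psi_2$- and $\bm\Psi_5$-type terms. For $k=3$, the monomial $y_1y_3$ in the third component receives $2y_1y_3$ from $\bm\Psi_{3,2,(2,0,1)}$ and $-2y_1y_3$ from $\bm\Psi_{3,4,(1,1,1)}$.
\end{itemize}
No triangular structure has actually been exhibited.

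\textbf{How the paper closes this step.} The paper proves surjectivity instead. It tabulates the curl of every generator in the $\bm\Psi$-basis of $\homdivfreeall{k-1}$ (Table~\ref{tab:representation curl}). It then treats the codomain basis family by family, in the order $\bm\Psi_{k-1,2},\bm\Psi_{k-1,3},\bm\Psi_{k-1,4},\bm\Psi_{k-1,5},\bm\Psi_{k-1,1}$, with an induction on one exponent inside each family. Each element is written as the curl of an explicit combination of generators, modulo codomain elements already shown to be curls. That ordering is precisely the triangular structure your plan lacks. It also produces explicit preimages, which is what makes Algorithm~\ref{alg:altQT construction} solve-free; an injectivity argument would not give them.

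\textbf{The count in (a).} Your count is asserted rather than performed, and it needs care. Read literally, the $\bm\Psi_{k,4,\bm i}$ families of Definition~\ref{def:alternative irr and sol} with $i_2=1$ or $i_3=1$ are contained in those with $i_1=1$ or $(i_1-1)i_2i_3\neq0$. This leaves only $2+\frac{(k-1)(k-2)}{2}$ generators of type $\bm\Psi_{k,4}$, and the total falls short of $k(k+2)$ by $2(k-2)$ when $k\ge3$. The count works only with the set the paper's proof actually uses: all $\bm\Psi_{k,4,\bm i}$ with $i_1\ge1$ except $\bm i=k\bm e_1$. Its table explicitly includes the cases $i_1>1,\ i_2=0$ and $i_1>1,\ i_3=0$. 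Since the theorem hinges on the precise index set, you need to pin this down before either (a) or (b) can be checked.
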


\begin{proof}
    Since the curl is a linear transformation defined on $\homdivfreebis{k}$, it is sufficient to show that (a) the dimensions of the domain and co-domain coincide and (b) $\mathcal C_k$ is surjective.

    \noindent
    (a) The selected elements are linearly independent since they are a subset of the basis of $\homdivfreeall{k}$. As a consequence, by counting the number of elements, we get
    \begin{equation*}
        \dim \homdivfreebis{k} = 2 + (k-1) + k + \frac{(k+2)(k-1)}{2} + \frac{k(k-1)}{2} = k(k+2) = \dim \homdivfreeall{k-1}
    \end{equation*}
    if $k\geq 2$, and $\dim\homdivfreebis{1} = 3 = \dim (\hompol{0})^3$.

    \noindent
    (b) To conclude the proof, we prove that $\mathcal C_k$ is surjective by showing that all elements of a basis of the co-domain can be represented as the curl of a linear combination of basis elements of $\homdivfreebis{k}$.  %For simplicity, we make the assumption that at each step we can use the elements already represented.
    If $k=1$, this simply follows from
\begin{equation*}%\label{eq:curl of S**_1}
    \curl\bm{\Psi}_{1, 1, \bm{e}_3} 
    = \bm e_2,
    \
    \curl\bm{\Psi}_{1, 1, \bm{e}_2} 
    =-\bm e_3, 
    \
    \curl\bm{\Psi}_{1, 2, \bm{e}_3}
    = -\bm e_1.
\end{equation*}
Consider next the case $k\geq 2$.
    Table \ref{tab:representation curl} explicitly shows the image of all elements of the basis of $\homdivfreebis{k}$ under the curl operator in the basis of $\homdivfreeall{k-1}$ for $k\geq 2$.
\begin{table}[htbp]
    \centering
    \renewcommand{\arraystretch}{1.5}
    \begin{tabular}{|c|c|c|}
        \hline
        Curl-operator action on $\homdivfreebis{k}$ & $\bm{i} = (i_1, i_2, i_3)$ & \# elements \\ \hline 
        
        $\curl \bm{\Psi}_{k,1,k\bm{e_3}} = k \bm{\Psi}_{k-1, 2, (k-1)\bm{e_3}}$ & $i_1=0, i_2=0, i_3=k$ & $1$ \\
        
        $\curl \bm{\Psi}_{k,1,k\bm{e_2}} = -k \bm{\Psi}_{k-1, 3, (k-1)\bm{e_2}}$ & $i_1=0, i_2=k, i_3=0$ & $1$ \\
        
        % $\curl\bm{\Psi}_{k,1,\bm{i}} = \bm{\Psi}_{k-1, 5, \bm{i} - \bm{e_3}}$ & $i_1=0, i_2>0, i_3>0$ \\ 
        \hline

        % $\curl\bm{\Psi}_{k,2,k\bm{e_3}} = -k \bm{\Psi}_{k-1, 1, (k-1)\bm{e_3}}$ & $i_1=0, i_2=0, i_3=k$ \\

        % $\curl\bm{\Psi}_{k,2,k\bm{e_1}} = k \bm{\Psi}_{k-1, 3, (k-1)\bm{e_1}}$ & $i_1=k, i_2=0, i_3=0$ \\
        
        $\curl \bm{\Psi}_{k,2,\bm{i}} = -\bm{\Psi}_{k-1, 4, \bm{i} - \bm{e_3}}$ & $i_1>0, i_2=0, i_3>0$ & $k-1$ \\ \hline

        $\curl \bm{\Psi}_{k,3,k\bm{e_2}} = k \bm{\Psi}_{k-1, 1, (k-1)\bm{e_2}}$ & $i_1=0, i_2=k, i_3=0$ & $1$ \\
        
        % $\curl\bm{\Psi}_{k,3,k\bm{e_1}} = -k \bm{\Psi}_{k-1, 2, (k-1)\bm{e_1}}$ & $i_1=k, i_2=0, i_3=0$ \\
        
        $\curl \bm{\Psi}_{k,3,\bm{i}} = i_2 \bm{\Psi}_{k-1, 4, \bm{i} - \bm{e_2}} - i_1 \bm{\Psi}_{k-1, 5, \bm{i} - \bm{e_1}}$ & $i_1>0, i_2>0, i_3=0$ & $k-1$ \\ \hline

        $\curl \bm{\Psi}_{k,4,\bm{e_1} + (k-1)\bm{e_3}} = k(k-1) \bm{\Psi}_{k-1,2,\bm{e_1} + (k-2)\bm{e_3}}$ & $i_1=1, i_2=0, i_3=k-1$ & $1$ \\
        
        $\begin{array}{r@{}l@{}} \curl \bm{\Psi}_{k,4,\bm{e_1} + (k-1)\bm{e_2}} =&\ -(k-1) \bm{\Psi}_{k-1,1,(k-2)\bm{e_2}+\bm{e_3}}\\
        &\ - (k-1) \bm{\Psi}_{k-1,3,\bm{e_1} + (k-2)\bm{e_2}}
        \end{array}$ & $i_1=1, i_2=k-1, i_3=0$ & $1$ \\

        % $\curl\bm{\Psi}_{k,4,k\bm{e_1}} = k(k-1) \bm{\Psi}_{k-1,2,(k-2)\bm{e_1} + \bm{e_3}}$ & $i_1=k, i_2=0, i_3=0$ \\
        
        $\begin{array}{r@{}l@{}} \curl \bm{\Psi}_{k,4,\bm{i}} =&\ -i_2 \bm{\Psi}_{k-1,1,\bm{i} -\bm{e_1} -\bm{e_2}+\bm{e_3}}\\
        &\ +(i_3+1) \bm{\Psi}_{k-1,5,\bm{i} -\bm{e_3}}
        \end{array}$ & $i_1=1, i_2>0, i_3>0$ & $k-2$ \\

        $\begin{array}{r@{}l@{}} \curl \bm{\Psi}_{k,4,\bm{i}} =&\ (i_3+1) i_3 \bm{\Psi}_{k-1,2,\bm{i}-\bm{e_3}} \\
        &\ + i_1(i_1-1) \bm{\Psi}_{k-1,2,\bm{i}-2\bm{e_1} + \bm{e_3}} \end{array}$ & $i_1>1, i_2=0, i_3>0$ & $k-2$ \\

        $\begin{array}{r@{}l@{}} \curl \bm{\Psi}_{k,4,\bm{i}} =&\ -i_2\bm{\Psi}_{k-1,3,\bm{i} - \bm{e_2}} \\ 
        &\ - \frac{i_1 i_2}{2} \bm{\Psi}_{k-1,4,\bm{i}-\bm{e_1}-\bm{e_2}+\bm{e_3}} \\
        & +\frac{i_1(i_1-1)}{2} \bm{\Psi}_{k-1,5,\bm{i}-2\bm{e_1}+\bm{e_3}} \end{array}$ & $i_1>1, i_2>0, i_3=0$ & $k-2$ \\

        $\begin{array}{r@{}l@{}} \curl \bm{\Psi}_{k,4,\bm{i}} =&\ (i_3+1) \bm{\Psi}_{k-1,5,\bm{i} -\bm{e_3}} \\
        &\ -\frac{i_1 i_2}{i_3+2} \bm{\Psi}_{k-1,4,\bm{i} -\bm{e_1} -\bm{e_2}+\bm{e_3}} \\
        &\ +\frac{i_1(i_1-1)}{i_3+2} \bm{\Psi}_{k-1,5,\bm{i}-2\bm{e_1} +\bm{e_3}}
        \end{array}$ & $i_1>1, i_2>0, i_3>0$ & $\frac{(k-2)(k-3)}{2}$ \\ \hline

        % $\curl\bm{\Psi}_{k,5,\bm{e_2} + (k-1)\bm{e_3}} = -k(k-1) \bm{\Psi}_{k-1,1,\bm{e_2} + (k-2)\bm{e_3}}$ & $i_1=0, i_2=1, i_3=k-1$ \\
        
        $\curl \bm{\Psi}_{k,5,k\bm{e_2}} = -k(k-1) \bm{\Psi}_{k-1,1,(k-2)\bm{e_2} + \bm{e_3}}$ & $i_1=0, i_2=k, i_3=0$ & $1$ \\

        % $\begin{array}{r@{}l@{}} \curl\bm{\Psi}_{k,5,(k-1)\bm{e_1} + \bm{e_2}} =&\ (k-1) \bm{\Psi}_{k-1,2,(k-2)\bm{e_1}+\bm{e_3}}\\
        % &\ + (k-1) \bm{\Psi}_{k-1,3,(k-2)\bm{e_1} + \bm{e_2}}
        % \end{array}$ & $i_1=k-1, i_2=1, i_3=0$ \\
        
        % $\begin{array}{r@{}l@{}} \curl\bm{\Psi}_{k,5,\bm{i}} =&\ -i_2(i_2-1) \bm{\Psi}_{k-1,1,\bm{i} -2\bm{e_2}+\bm{e_3}} -(i_3+1)i_3 \bm{\Psi}_{k-1,1,\bm{i} -\bm{e_3}}
        % \end{array}$ & $i_1=0, i_2>1, i_3>0$ \\

        $\begin{array}{r@{}l@{}} \curl \bm{\Psi}_{k,5,\bm{i}} =&\ i_1 \bm{\Psi}_{k-1,2,\bm{i}-\bm{e_1}-\bm{e_2}+\bm{e_3}} \\
        &\ - (i_3+1) \bm{\Psi}_{k-1,4,\bm{i}-\bm{e_3}} \end{array}$ & $i_1>0, i_2=1, i_3>0$ & $k-2$ \\

        $\begin{array}{r@{}l@{}} \curl \bm{\Psi}_{k,5,\bm{i}} =&\ i_1\bm{\Psi}_{k-1,3,\bm{i} - \bm{e_1}} \\
        &\ - \frac{i_2(i_2-1)}{2} \bm{\Psi}_{k-1,4,\bm{i}-2\bm{e_2}+\bm{e_3}} \\
        & +\frac{i_1 i_2}{2} \bm{\Psi}_{k-1,5,\bm{i}-\bm{e_1}-\bm{e_2}+\bm{e_3}} \end{array}$ & $i_1>0, i_2>1, i_3=0$ & $k-2$ \\

        $\begin{array}{r@{}l@{}} \curl \bm{\Psi}_{k,5,\bm{i}} =&\ -(i_3+1) \bm{\Psi}_{k-1,4,\bm{i} -\bm{e_3}} \\ 
        &\ - \frac{i_2(i_2-1)}{i_3+2} \bm{\Psi}_{k-1,4,\bm{i} -2\bm{e_2}+\bm{e_3}} \\
        &\ +\frac{i_1 i_2}{i_3+2} \bm{\Psi}_{k-1,5,\bm{i}-\bm{e_1}-\bm{e_2} +\bm{e_3}}
        \end{array}$ & $i_1>0, i_2>1, i_3>0$ & $\frac{(k-2)(k-3)}{2}$ \\ \hline
    \end{tabular}
    \caption{Action of the curl operator on basis elements of $\homdivfreebis{k}$ for $k\geq 2$.}
    \label{tab:representation curl}
\end{table}
    For clarity, denote by $\mathcal P$ the property of being the curl of a linear combination of basis elements of $\homdivfreebis{k}$. 
    The proof proceeds in the following order. %In the following list of all elements of the basis of $\homdivfreeall{k-1}$,
    \begin{itemize}
        \item All $\bm{\Psi}_{k-1, 2, \bm{j}}$ with $\bm{j}\in\mathbb N_0^3$, $\lvert\bm{i}\rvert = k-1$, $j_2=0$, satisfy $\mathcal P$ since:
        \begin{itemize}
            \item [$\circ$] $k \bm{\Psi}_{k-1, 2, (k-1)\bm{e}_3} = \curl\left[\bm{\Psi}_{k, 1, k\bm{e}_3}\right]$;
            \item [$\circ$] $k(k-1) \bm{\Psi}_{k-1, 2, \bm{e_1}+(k-2)\bm{e}_3} = \curl \left[ \bm{\Psi}_{k, 4, \bm{e}_1 + (k-1)\bm{e}_3} \right]$;
            \item [$\circ$] for $j_1$ increasing from $2$ to $k-1$, $\bm{j} = (j_1, 0, k-j_1)$ and
            \begin{equation*}
                (j_3+2) (j_3+1) \bm{\Psi}_{k-1, 2, \bm{j}} = \curl \left[ \bm{\Psi}_{k, 4, \bm{j}+\bm{e}_3} \right] - j_1(j_1-1) \bm{\Psi}_{k-1, 2, \bm{j}-2\bm{e}_1+2\bm{e}_3}           
            \end{equation*}
            where $(\bm{j}-2\bm{e}_1+2\bm{e}_3)_1<j_1$, hence $\bm{\Psi}_{k-1, 2, \bm{j}-2\bm{e}_1+2\bm{e}_3}$ %is itself already the curl of a linear combination of elements of the basis of $\homdivfreebis{k}$
           itself satisfies  $\mathcal P$.
        \end{itemize}
        
        \item All $\bm{\Psi}_{k-1, 3, \bm{i}}$ with $\bm{i}\in\mathbb N_0^3$, $\lvert\bm{i}\rvert = k-1$, $i_3=0$, satisfy $\mathcal P$ since:
        \begin{itemize}
            \item [$\circ$] $k\bm{\Psi}_{k-1, 3, (k-1)\bm{e}_2} = - \curl\left[\bm{\Psi}_{k, 1, k\bm{e}_2}\right]$;
            \item [$\circ$] $k (k-1) \bm{\Psi}_{k-1, 3, \bm{e}_1 + (k-2) \bm{e}_2} = \curl\left[ \bm{\Psi}_{k, 5, k\bm{e}_2} - k \bm{\Psi}_{k, 4, \bm{e}_1 + (k-1) \bm{e}_2} \right]$;
            \item [$\circ$] for $j_1$ increasing from $2$ to $k-1$, $\bm{j} = (j_1, k-1-k_1, 0)$ and
            \begin{equation*}
                \begin{split}
                    (j_2+1) (j_1+1) \bm{\Psi}_{k-1, 3, \bm{j}} =&\ \curl\left[ j_1 \bm{\Psi}_{k, 5, \bm{j}-\bm{e}_1+2\bm{e}_2} - (j_2+2) \bm{\Psi}_{k, 4, \bm{j}+\bm{e}_2} \right] \\
                    &\ - (j_1+1) j_1  \bm{\Psi}_{k-1, 3, \bm{j}-2\bm{e}_1+2\bm{e}_2}
                \end{split}
            \end{equation*}
            where $(\bm{j}-2\bm{e}_1+2\bm{e}_2)_1<j_1$, hence $\bm{\Psi}_{k-1, 3, \bm{j}-2\bm{e}_1+2\bm{e}_2}$ itself satisfies $\mathcal P$.
        \end{itemize}
        
        \item All $\bm{\Psi}_{k-1, 4, \bm{i}}$ with $\bm{i}\in\mathbb N_0^3$, $\lvert\bm{i}\rvert = k-1$, $i_1\geq 1$, satisfy $\mathcal P$:
        \begin{itemize}
            \item [$\circ$] for $j_1\in\indices{1}{k-1}$, $\bm{j} = (j_1, 0, k-1-j_1)$ and
            \begin{equation*}
                \bm{\Psi}_{k-1, 4, \bm{j}} = - \curl \left[ \bm{\Psi}_{k-1, 2, \bm{j}+\bm{e}_3} \right];
            \end{equation*}
            \item [$\circ$] for $j_1\in\indices{1}{k-2}$, $\bm{j} = (j_1, 1, k-2-j_1)$ and
            \begin{equation*}
                (j_3+1) \bm{\Psi}_{k-1, 4, \bm{j}} = - \curl \left[ \bm{\Psi}_{k, 5, \bm{j}+\bm{e}_3} \right] + j_1 \bm{\Psi}_{k-1, 2, \bm{j}-\bm{e}_1-\bm{e}_2+2\bm{e}_3}
            \end{equation*}
            where $\bm{\Psi}_{k-1, 2, \bm{j}-\bm{e}_1-\bm{e}_2+2\bm{e}_3}$ itself satisfies $\mathcal P$;
            \item [$\circ$] for $j_2$ increasing from $2$ to $k-3$, $\bm{j} = (k-1-j_2, j_2, 0)$ and
            \begin{equation*}
                \begin{split}
                    (j_1+2) (j_1+1) \bm{\Psi}_{k-1, 4, \bm{j}} = &\ \curl\bigl[(j_1+2) j_2 \bm{\Psi}_{k, 4, \bm{j}+\bm{e}_1-\bm{e}_2+\bm{e}_3} \\
                    &\ \hspace{-3.2cm} - (j_1+2) (j_1+1) \bm{\Psi}_{k, 5, \bm{j}+\bm{e}_3} + j_2 \bm{\Psi}_{k, 3, \bm{j}+2\bm{e}_1-\bm{e}_2} \bigr] - j_2 (j_2-1) \bm{\Psi}_{k-1, 4, \bm{j}+2\bm{e}_1-2\bm{e}_2}
                \end{split}
            \end{equation*}
            where $(\bm{j}+2\bm{e}_1-2\bm{e}_2)_2 < j_2$, hence $\bm{\Psi}_{k-1, 4, \bm{j}+2\bm{e}_1-2\bm{e}_2}$  itself satisfies $\mathcal P$;
            \item [$\circ$] for $j_2$ increasing from $2$ to $k-3$, $\bm{j} = (k-2-j_2, j_2, 1)$ and
            \begin{equation*}
                \begin{split}
                    2(j_1+3) (j_1+2) (j_1+1) \bm{\Psi}_{k-1, 4, \bm{j}} =&\ \curl\bigl[ (j_1+3) (j_1+2) j_2 \bm{\Psi}_{k, 4, \bm{j}+\bm{e}_1-\bm{e}_2+\bm{e}_3} \\
                    &\ \hspace{-3.8cm} - (j_1+3) (j_1+2) (j_1+1) \bm{\Psi}_{k, 5, \bm{j}+\bm{e}_3} - 4 j_2 \bm{\Psi}_{k, 4, \bm{j}+3\bm{e}_1-\bm{e}_2-\bm{e}_3} \bigl] \\
                    &\ \hspace{-3.8cm} - 4 j_2 (j_2-1) \bm{\Psi}_{k-1, 3, \bm{j}+3\bm{e}_1-2\bm{e}_2-\bm{e}_3} - 2 (j_1+3) j_2 (j_2-1) \bm{\Psi}_{k-1, 4, \bm{j}+2\bm{e}_1-2\bm{e}_2}
                \end{split}
            \end{equation*}
            where $(\bm{j}+2\bm{e}_1-2\bm{e}_2)_2 < j_2$, hence both $\bm{\Psi}_{k-1, 3, \bm{j}+3\bm{e}_1-2\bm{e}_2-\bm{e}_3}$ and $\bm{\Psi}_{k-1, 4, \bm{j}+2\bm{e}_1-2\bm{e}_2}$ satisfy $\mathcal P$;
            \item [$\circ$] for $j_3$ increasing from $2$ to $k-4$ and for $j_2$ increasing from $2$ to $k-2-j_3$, $\bm{j} = (k-1-j_2-j_3, j_2, j_3)$ and
            \begin{equation*}
                \begin{split}
                    (j_1+2) (j_1+1) (j_3+2) \bm{\Psi}_{k-1, 4, \bm{j}} =&\ \curl\bigl[ (j_1+2) j_2 \bm{\Psi}_{k, 4, \bm{j}+\bm{e}_1-\bm{e}_2+\bm{e}_3} \\
                    &\ \hspace{-4cm}- (j_1+2) (j_1+1) \bm{\Psi}_{k, 5, \bm{j}+\bm{e}_3} - (j_3+2) (j_3+1) \bm{\Psi}_{k, 5, \bm{j}+2\bm{e}_1-\bm{e}_3} \bigr] \\
                    &\ \hspace{-4cm} - (j_3+2) (j_3+1) j_3 \bm{\Psi}_{k-1, 4, \bm{j}+2\bm{e}_1-2\bm{e}_3} - j_2 (j_2-1) (j_3+2) \bm{\Psi}_{k-1, 4, \bm{j}+2\bm{e}_1-2\bm{e}_2}
                \end{split}
            \end{equation*}
            where $(\bm{j}+2\bm{e}_1-2\bm{e}_3)_3 < j_3$ and $(\bm{j}+2\bm{e}_1-2\bm{e}_2)_2 < j_2$, hence both $\bm{\Psi}_{k-1, 4, \bm{j}+2\bm{e}_1-2\bm{e}_3}$ and $\bm{\Psi}_{k-1, 4, \bm{j}+2\bm{e}_1-2\bm{e}_2}$ satisfy $\mathcal P$.
        \end{itemize}
        
        \item All $\bm{\Psi}_{k-1, 5, \bm{i}}$ with $\bm{i}\in\mathbb N_0^3$, $\lvert\bm{i}\rvert = k-1$, $i_2\geq 1$:
        \begin{itemize}
            \item [$\circ$] for $j_1\in\indices{0}{k-2}$, $\bm{j} = (j_1, k-1-j_1, 0)$ and
            \begin{equation*}
                (j_1+1) \bm{\Psi}_{k-1, 5, \bm{j}} = - \curl \bigl[ \bm{\Psi}_{k, 3, \bm{j}+\bm{e}_1} \bigr] + j_2 \bm{\Psi}_{k-1, 4, \bm{j}+\bm{e}_1-\bm{e}_2}
            \end{equation*}
            where $\bm{\Psi}_{k-1, 4, \bm{j}+\bm{e}_1-\bm{e}_2}$ itself satisfies $\mathcal P$;
            \item [$\circ$] for $j_1\in\indices{0}{k-3}$, $\bm{j} = (j_1, k-2-j_1, 1)$ and
            \begin{equation*}
                \begin{split}
                    (j_1+2) (j_1+1) \bm{\Psi}_{k-1, 5, \bm{j}} =&\ \curl\bigl[ 2 \bm{\Psi}_{k, 4, \bm{j}+2\bm{e}_1-\bm{e}_3} \bigr] + 2 j_2 \bm{\Psi}_{k-1, 3, \bm{j}+2\bm{e}_1-\bm{e}_2-\bm{e}_3} \\ 
                    &\ + (j_1+2) j_2 \bm{\Psi}_{k-1, 4, \bm{j}+\bm{e}_1-\bm{e}_2}
                \end{split}
            \end{equation*}
            where both $\bm{\Psi}_{k-1, 3, \bm{j}+2\bm{e}_1-\bm{e}_2-\bm{e}_3}$ and $\bm{\Psi}_{k-1, 4, \bm{j}+\bm{e}_1-\bm{e}_2}$ both satisfy $\mathcal P$;
            \item [$\circ$] for $j_3\in\indices{2}{k-2}$ and $j_1\in\indices{0}{k-2-j_3}$, $\bm{j} = (j_1, k-1-j_1-j_3, j_3)$ and
            \begin{equation*}
                \begin{split}
                    (j_1+1) (j_2+1) \bm{\Psi}_{k-1, 5, \bm{j}} =&\ \curl\bigl[ (j_3+1) \bm{\Psi}_{k, 5, \bm{j}+\bm{e}_1+\bm{e}_2-\bm{e}_3} \bigr] \\
                    &\ + (j_3+1) j_3 \bm{\Psi}_{k-1, 4, \bm{j}+\bm{e}_1+\bm{e}_2-2\bm{e}_3} + (j_2+1) j_2 \bm{\Psi}_{k-1, 4, \bm{j}+\bm{e}_1-\bm{e}_2}
                \end{split}
            \end{equation*}
            where $\bm{\Psi}_{k-1, 4, \bm{j}+\bm{e}_1+\bm{e}_2-2\bm{e}_3}$ and $\bm{\Psi}_{k-1, 4, \bm{j}+\bm{e}_1-\bm{e}_2}$ both satisfy $\mathcal P$.
        \end{itemize}
        
        \item All $\bm{\Psi}_{k-1, 1, \bm{i}}$ with $\bm{i}\in\mathbb N_0^3$, $\lvert\bm{i}\rvert = k-1$, $i_1=0$:
        \begin{itemize}
            \item [$\circ$] $k \bm{\Psi}_{k-1, 1, (k-1)\bm{e_2}} = \curl \bigl[ \bm{\Psi}_{k, 3, k\bm{e}_2} \bigr]$;
            \item [$\circ$] $k (k-1) \bm{\Psi}_{k-1, 1, (k-2)\bm{e}_2+\bm{e}_3} = - \curl \bigl[ \bm{\Psi}_{k, 5, k\bm{e}_2} \bigr]$;
            \item [$\circ$] for $j_3\in\indices{2}{k-1}$, $\bm{j} = (0, k-1-j_3, j_3)$ and
            \begin{equation*}
                (j_2+1) \bm{\Psi}_{k-1, 1, \bm{j}} = - \curl \bigl[ \bm{\Psi}_{k, 4, \bm{j}+\bm{e}_1+\bm{e}_2-\bm{e}_3} \bigr] + j_3 \bm{\Psi}_{k-1, 5, \bm{j}+\bm{e}_1+\bm{e}_2-2\bm{e}_3}
            \end{equation*}
            where $\bm{\Psi}_{k-1, 5, \bm{j}+\bm{e}_1+\bm{e}_2-2\bm{e}_3}$ itself satisfies $\mathcal P$.
        \end{itemize}
    \end{itemize}
    This concludes the proof.
\end{proof}
Finally, Algorithm \ref{alg:altQT construction} proposes an alternative approach to Algorithm \ref{alg:QT construction} for the computation of individual QT polynomial functions without the need for an SVD {or any numerical solve}. 
%\IF{In practice, I still use a linear solver for the equations involving the curl and the divergence in the code. Indeed, for what we have shown before, it is ``easy" to explicitly write the matrices representing $\curl$ and $\diver$ (acting on polynomials of degree $k$), while their inverse is much more technical to write (even if not impossible I believe). If our intention is to publish the code, I would avoid mentioning ``any numerical solve".}

\begin{algorithm}[ht]
    \caption{
    %Construction of a QT polynomial
    $\left(\funel{Q},\funmg{Q}\right) = {\tt ALTConstructQTpol}\left(p,\bm{x}_0,f_\permitt, f_\permeab, (\funel{H},\funmg{H})\right)$
    }\label{alg:altQT construction}
    %\lmig{the expilons and mus should also be input! and review how spaces are used below}\IF{I changed the description of the input adding the missing pieces and describing the coefficients as Taylor expansions rather than functions}

    {\bf Input:} {$p\in\mathbb N$,  %$\permitt, \permeab$ satisfying Assumption~\ref{ass:coefficients}}
    $\bm{x}_0\in\mathbb R^3$,\\
    \nonl\phantom{\bf Input:}
    $f_\permitt = \displaystyle\sum_{k=0}^p \permitt_k$ and $f_\permeab = \displaystyle\sum_{k=0}^p \permeab_k$, with $(\permitt_k, \permeab_k) \in\left(\hompol{k}\right)^2$  $\forall k\in\llbracket 0,p\rrbracket$ and $\varepsilon_0\mu_0\neq 0$, \\%and $\permitt_0. \permeab_0 \neq 0$;
    \nonl\phantom{\bf Input:}
    $\left(\funel{H},\funmg{H}\right) = \left(\displaystyle\sum_{k=0}^p \funel{H}_k,   \displaystyle\sum_{k=0}^p \funmg{H}_k \right)$ %\in \harm{p}\times \harm{p}$
    with $(\funel{H}_k,\funmg{H}_k )\in \homharm{k}\times \homharm{k}$ $\forall k\in\llbracket 0,p\rrbracket$
    }

    \For{$k\ {\rm from}\ 1\ {\rm to}\ p$}{
        {\bf find} 
        $\left(\funel{F}_k, \funmg{F}_k\right) \in \left(\homdivfreebis{k}\right)^2$ such that
        \begin{gather}
            \curl \funel{F}_k = {\color{white} -}\imag \kappa \,\Biggl(\, \displaystyle\sum_{j=0}^{k-2} \permeab_j \left(\funmg{F}_{k-j-1} + \funmg{G}_{k-j-1} + \funmg{H}_{k-j-1} \right) + \permeab_{k-1} \funmg{H}_0 \Biggr) %\label{eq:alg 1}
            \\
            \curl \funmg{F}_k = - \imag \kappa \,\Biggl(\, \displaystyle\sum_{j=0}^{k-2} \permitt_j \left(\funel{F}_{k-j-1} +\funel{G}_{k-j-1} + \funel{H}_{k-j-1} \right) + \permitt_{k-1} \funel{H}_0 \Biggr) %\label{eq:alg 2}
        \end{gather}

        {\bf find}
        $\left(\funel{G}_k, \funmg{G}_k \right)\in\left(\homcurlfreebis{k}\right)^2$ such that
        \begin{gather}
            \diver \funel{G}_k = - (\permitt_0)^{-1} \diver \Biggl(\, \displaystyle\sum_{j=1}^{k-1} \permitt_j \left(\funel{F}_{k-j} +\funel{G}_{k-j} + \funel{H}_{k-j} \right) + \permitt_k \funel{H}_0 \Biggr) %\label{eq:alg 3}
            \\
            \diver \funmg{G}_k = - (\permeab_0)^{-1} \diver \Biggl(\, \displaystyle\sum_{j=1}^{k-1} \permeab_j \left(\funmg{F}_{k-j} +\funmg{G}_{k-j} + \funmg{H}_{k-j} \right) + \permeab_k \funmg{H}_0 \Biggr) %\label{eq:alg 4}
        \end{gather}
    }
    
    {\bf Output: } {$\left(\funel{Q},\funmg{Q}\right) \in (\pol{p})^3 \times (\pol{p})^3$  where
    \\
    \nonl\phantom{\bf Output:}$\funel{Q} = \funel{H}_0 +  \displaystyle\sum_{k=1}^p \left(\funel{F}_k + \funel{G}_k + \funel{H}_k\right)$ and $\funmg{Q} = \funmg{H}_0 + \displaystyle\sum_{k=1}^p \left(\funmg{F}_k + \funmg{G}_k + \funmg{H}_k\right)$
    }
\end{algorithm}
\begin{remark}
    In the output of Algorithm \ref{alg:altQT construction}, the $\bm F,\bm G,\bm H$ terms are not the components of the Helmholtz decomposition.
\end{remark}

\subsection{Construction of quasi-Trefftz bases}\label{subsec:construction of quasi-Trefftz bases}
QT bases can be constructed thanks to both previous approaches for the construction of individual QT functions. Indeed, 
given any basis $\{
\varphi_j%(\varphi_j,\phi_j)
\}_{j\in \llbracket 1, 2(p+1)(p+3) \rrbracket}$ of $\harm{p}\times\harm{p}$, then  both
    \begin{equation}\label{eq:basis first algorithm}
        \left\{ 
        {\left(\funel{Q}_j, \funmg{Q}_j\right) = \tt ConstructQTpol}\left(p,\bm{x}_T,\taylortrunsc{p}{\,\bm{x}_T} [\permitt], \taylortrunsc{p}{\,\bm{x}_T} [\permeab], 
        \varphi_j%\varphi_j,\phi_j
        \right)
        \right\}_{j\in \llbracket 1, 2(p+1)(p+3) \rrbracket}
    \end{equation}
    \begin{equation}\label{eq:basis second algorithm}
        \left\{ 
        {\left(\funel{Q}_j, \funmg{Q}_j\right) = \tt ALTConstructQTpol}\left(p,\bm{x}_T,\taylortrunsc{p}{\,\bm{x}_T} [\permitt], \taylortrunsc{p}{\,\bm{x}_T} [\permeab], 
        \varphi_j%\varphi_j,\phi_j
        \right)
        \right\}_{j\in \llbracket 1, 2(p+1)(p+3) \rrbracket}
    \end{equation}
form a bases of $\QTopt{p}(T)$, see Theorem \ref{thm:QTbasis}.
In order to fully describe how to construct such bases, we only need to describe how to construct a basis for harmonic vector fields.

In order to define a basis for harmonic vector fields $\harm{p}\times\harm{p}$,
consider the space of homogeneous harmonic vector fields $\homharm{k}$, starting from the following characterization.

\begin{lemma}[Harmonic polynomials]
    Let $k\in\mathbb N$ and a point $\bm{x}_0\in\mathbb R^3$. Then,
    $
    \homharm{k} 
    = \grad \Big[ \ker (\Delta|_{\hompol{k}}) \Big]
    $.
\end{lemma}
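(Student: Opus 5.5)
The plan is to prove the two inclusions separately. Both directions rest on three facts: the identities $\curl\grad=\bm 0$ and $\diver\grad=\Delta$, and the exactness of the homogeneous de Rham complex \eqref{eq:De Rham complex}. Throughout, all homogeneous spaces are centered at the same $\bm x_0$. Since $\grad$ maps $\hompol{k}$ into $(\hompol{k-1})^3$, I read the identity as saying that the harmonic vector fields are exactly the gradients of harmonic homogeneous scalar polynomials of degree $k$. I will keep track of this one-degree shift explicitly, so that the right-hand side is compared with harmonic fields of the degree that $\grad$ produces.

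\emph{Inclusion $\supseteq$.} Take $\phi\in\hompol{k}$ with $\Delta\phi=0$ and set $\bm H=\grad\phi$. Each component $\partial_{x_j}\phi$ is homogeneous of one degree lower (centered at $\bm x_0$), so $\bm H$ is a homogeneous polynomial vector field. Then $\curl\bm H=\curl\grad\phi=\bm 0$, so $\bm H$ is irrotational. Also $\diver\bm H=\Delta\phi=0$, so $\bm H$ is solenoidal. Hence $\bm H$ lies in the intersection of the solenoidal and irrotational spaces, which is the harmonic space by definition. For $k=1$ the gradient is a constant vector, and the special convention $\homharm{0}=(\hompol{0})^3$ covers this case.

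\emph{Inclusion $\subseteq$.} Take a harmonic homogeneous field $\bm H$, i.e.\ one with $\curl\bm H=\bm 0$ and $\diver\bm H=0$. Exactness of \eqref{eq:De Rham complex} at the $\curl$ node says $\ker\curl=\mathcal R(\grad)$ between homogeneous spaces. So there exists a homogeneous scalar $\phi$, one degree higher than $\bm H$, with $\bm H=\grad\phi$. The constant of integration is harmless, because $\phi$ is homogeneous of positive degree. Then $\Delta\phi=\diver\grad\phi=\diver\bm H=0$, so $\phi\in\ker(\Delta|_{\hompol{k}})$ and $\bm H\in\grad[\ker(\Delta|_{\hompol{k}})]$.

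\emph{Consistency check.} As a sanity check, and as an alternative route to the second inclusion, I will compare dimensions. The map $\grad$ is injective on $\hompol{k}$ for $k\geq 1$, since $\ker\grad=\{0\}$ at the left end of \eqref{eq:De Rham complex}. The surjectivity of $\Delta:\hompol{k}\to\hompol{k-2}$ gives $\dim\ker(\Delta|_{\hompol{k}})=2k+1$. This matches the dimension in Table~\ref{tab:all spaces dimensions} of the harmonic space of the corresponding degree after the index shift. The surjectivity of $\Delta$ is classical; it can also be derived by composing the surjective $\diver|_{\homcurlfreeall{k-1}}$ from Lemma~\ref{prop:surjectivity operators} with $\grad$ onto $\homcurlfreeall{k-1}$.

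\emph{Main obstacle.} The only delicate step is the bookkeeping of degrees and centers in invoking exactness. I must check that the potential $\phi$ produced by exactness is homogeneous centered at $\bm x_0$. This holds by Remark~\ref{rmk:indepcenter}: translate to the origin, apply \eqref{eq:De Rham complex} there, and translate back. I must also handle the lowest-degree case, where the harmonic space consists of constants, separately.
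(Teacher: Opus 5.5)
Your proof is correct and follows the same route as the paper. Exactness of the homogeneous de Rham complex at the $\curl$ node gives a scalar potential $q$ with $\bm H=\grad q$, and $\diver\grad=\Delta$ turns $\diver\bm H=0$ into $\Delta q=0$; the paper compresses this into a single chain of equivalences rather than two inclusions.

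You were also right to flag the degree shift. The printed statement, and the paper's own proof with $q\in\hompol{k}$, contain an index slip. The intended identity, consistent with Algorithm~\ref{alg:homogeneouns HVF construction} and $\dim\homharm{k}=2k+3$, is $\homharm{k}=\grad\big[\ker(\Delta|_{\hompol{k+1}})\big]$, which is exactly your version reindexed.
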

\begin{proof}
    For any $\bm Q\in (\hompol{k})^3$, by definition
    $$
    \bm Q \in  \homharm{k}
    \Leftrightarrow
    \left\{\begin{array}{c}
            \curl\bm{Q}=\bm{0},\\
            \diver\bm{Q}=0,
    \end{array}\right.
    $$
    and therefore, from properties of the de Rham sequence
    $$
    \bm Q \in  \homharm{k}
    \Leftrightarrow
    \left\{\begin{array}{c}
        \exists q\in\hompol{k},  \bm{Q} = \grad q,\\
        \Delta q = 0.
    \end{array}\right.
    $$
    This concludes the proof.
\end{proof}

Accordingly, Algorithm \ref{alg:homogeneouns HVF construction} proposes an explicit procedure to construct a homogeneous polynomial harmonic vector field based on the construction of a scalar harmonic polynomial from \cite{Imbert-Gerard_2025-2}.
An explicit procedure for constructing a basis of harmonic vector fields $\harm{p}\times \harm{p}$ then naturally follows; it is presented in Algorithm \ref{alg:HVF basis construction}.

%\lmig{what do you think about this paragraph? another option would be to say that first and then write the Lemma and algorithms at the origin}\IF{I prefer the current option.}
These procedures have been implemented to produce the results presented in the next section. More precisely, with the context of QT-DG in mind, a single basis of harmonic vector fields $\harm{p}\times \harm{p}$ can be constructed centered at the origin, and then it can be used on each mesh element by simply translating the canonical basis terms.

\begin{algorithm}[ht]
    \caption{
    %Construction of a homogeneous polynomial harmonic vector field
    $\bm{Q} = {\tt ConstructhomHVFpol}
    \left(
    k,\bm{x}_0,
    \{ \theta_{\ell} \}_{\ell\in\indices{1}{2k+3}}
    \right)
    $
    }\label{alg:homogeneouns HVF construction}

    {\bf Input:} {$k\in\mathbb N$, $\bm{x}_0\in\mathbb R^3$,
    \\
    \nonl\phantom{\bf Input:}
    $\{ \theta_{\ell} \}_{\ell\in\indices{1}{2k+3}}\in\mathbb C^{2k+3}$ 
    }

    {\bf set} $n = 1$ \\
    \For{$i_1\in\indices{0}{1}$}{
        \For{$i_2\in\indices{0}{k+1-i_1}$}{
            {\bf set} $i_3 = k+1 - i_1 - i_2$ %\IF{Shouldn't be $k+1-i_1-i_2$? The degree of the scalar harmonic polynomial should be $k+1$}
            \\
            {\bf set} $c_{\bm{i}} = \theta_{n}$\\
            {\bf set} $n=n+1$
        }
    }

    \For{$i_1\in\indices{0}{k-1}$}{
        \For{$i_2\in\indices{0}{k-1-i_1}$}{
            {\bf set} $i_3 = k-1 - i_1 - i_2$\\
            {\bf set} $c_{\bm{i}+2\bm{e}_1} = \displaystyle\frac{1}{(i_1+2)(i_1+1)} \left[(i_2+2)(i_2+1) c_{\bm{i}+2\bm{e}_2} + (i_3+2)(i_3+1) c_{\bm{i}+2\bm{e}_3}\right]$
        }
    }

    {\bf Output: } $\bm Q(\bm{x}) = \displaystyle \grad \left[\sum_{\lvert\bm{i}\rvert = k+1} c_{\bm{i}} \left(\bm{X}-\bm{x}_0\right)^{\bm{i}}\right]$
    %It is defined in the preliminary section \IF{Is this $\bm{X}$ defined somewhere?}
    
\end{algorithm}

\begin{algorithm}[ht]
    \caption{
    %Construction of a homogeneous polynomial harmonic vector field
    $\{\bm{Q}_n\}_{n\in\indices{1}{2(p+1)(p+3)}}  = {\tt ConstructHVFpolEB}
    \left(p,\bm{x}_0,
    \left\{
    b^k%\{ b_{\ell} \}_{\ell\in\indices{1}{2k+3}}
    \right\}_{k\in\indices{0}{p}}
    \right)
    $ %{\color{red}SHOULDNT THERE BE A p HERE???????}
    }\label{alg:HVF basis construction}

    {\bf Input:} {$p\in\mathbb N$, $\bm{x}_0\in\mathbb R^3$,
    \\
    \nonl\phantom{\bf Input:}
    $\forall k\in\indices{0}{p}, b^k=\{ b_{\ell}^k \}_{\ell\in\indices{1}{2k+3}}$ a basis for $ \mathbb C^{2k+3}$
    }

    \For{$k\in\indices{1}{p}$}{
        \For{$\ell\in\indices{1}{2k+3}$}{
            $\bm{Q}_\ell^k = {\tt ConstructhomHVFpol}
            \left( k,\bm{x}_0,b^k_{\ell} \right)
            $
      }
    }
    
    {\bf Output: } $\Big\{
    \left\{(\bm Q^k_\ell,\bm 0)\right\}_{\ell\in\indices{1}{2k+3}}
    \cup
    \left\{(\bm 0,\bm Q^k_\ell)\right\}_{ \ell\in\indices{1}{2k+3}}\Big\}_{k\in\indices{0}{p}}$ 
    
\end{algorithm}

\section{Numerical results}\label{sec:numerical results}
Previous sections defined the QT space $\QTopt{p}(T)$, described and proved its best approximation properties, and presented algorithms for constructing QT bases. 
This section presents numerical experiments to illustrate the practical properties of $\QTopt{p}(T)$. These experiments involve both the implementation of QT basis construction approaches and the validation of best approximation properties.
%In this section, we show the approximation properties for two numerical test cases with a known exact solution: with constant coefficients in Subsection~\ref{subsec:test - constant coeffs}, and with variable coefficients in Subsection~\ref{subsec:test - variable coeffs test1}.

\subsection{Setup}\label{subsec:setup}
The problems of interest are governed by the first-order time-harmonic Maxwell's equations
\eqref{eq:first-order Maxwell}, with given coefficients $\permitt,\permeab$. 
Two test cases will then be defined by specifying $\permitt,\permeab$, one with constant coefficients, the other with variable coefficients, as well as an exact solution $\bm{V}_{\rm exact}=(\el_{\rm exact}, \mg_{\rm exact})$ to the corresponding PDE system.
%I kept Q only for the basis functions, then used V for "approx" and "exact" cause it is more readable to have the same letter for both\IF{In the rest of the paper, $\bm{Q}$ denoted a polynomial. Do you think it is fine to keep $\bm{Q}_{\rm exact}$ to denote the exact solution, or should we change the notation? What about $\bm{V}$ or $\bm{V}_{\rm exact}$ (for vector field)?}
%let's keep it like that for now and see what reviewers will say about that\IF{Also, if the subscripts are too long and annoying, we could change them to ex and ap}

Consider first a single open set $T\subset \mathbb R^3$ with a point $\bm x_T\in T$ and a fixed parameter $p$. 
A local QT basis for $\QTopt{p}(T)$, denoted $\{\bm{Q}_j\}_{j\in\indices{1}{2(p+1)(p+3)}}$, can be constructed from the Taylor expansion polynomial of $\permitt,\permeab$, as described in Section~\ref{subsec:Implementation consideration}. All numerical tests have been conducted using both Algorithms \ref{alg:QT construction} and \ref{alg:altQT construction}. However, the choice of algorithm did not appear to make any significant difference in the results. Therefore, all results presented here have been obtained using a single algorithm, namely Algorithm \ref{alg:altQT construction}.
Next, a QT element $\bm{V}_{\rm approx}=(\el_{\text{approx}}, \mg_{\text{approx}})\in\QTopt{p}(T)$,
\begin{equation*}
    \bm{V}_{\rm approx}=\sum_{j=1}^{2(p+1)(p+3)} \alpha_j\bm{Q}_j,
\end{equation*}
 is constructed as an approximation of the exact solution 
$
\bm{V}_{\rm exact}%=(\el_{\rm exact}, \mg_{\rm exact})
$,
%\sout{by matching their Taylor polynomials of degree $p$.}
%
% {%READ AND CHECK WHAT FOLLOWS
% \sout{To approximate a given exact solution to Maxwell's equations \eqref{eq:first-order Maxwell}, denoted $\bm{V}_{\rm exact}$, by a QT function} 
% \modTC{More specifically,}
% \begin{equation*}
%     \bm{V}_{\rm approx}=\sum_{j=1}^{2(p+1)(p+3)} \alpha_j\bm{Q}_j,
% \end{equation*}
% \modTC{and, }
following the proof of Theorem~\ref{th:approximation properties QTopt},
by matching their Taylor polynomials of degree $p$: %the goal is to compute coefficients $\{\alpha_j\}_{j\in\indices{1}{2(p+1)(p+3)}}$ by matching the Taylor polynomials of degree $p$ of $\bm{V}_{\rm approx}$ and $\bm{V}_{\rm exact}$:
$$
    \sum_{j=1}^{2(p+1)(p+3)} \alpha_j\underbrace{\taylortrun{p}{\ \bm{x}_T}\left[\bm{Q}_j\right]}_{=\,\bm{Q}_j}
    =
    \taylortrun{p}{\ \bm{x}_T}\left[\bm{V}_{\rm exact}\right]
    \Leftrightarrow
    \forall \bm{i}  \in\mathbb N_0^3, |\bm{i}|\leq p, \sum_{j=1}^{2(p+1)(p+3)} \alpha_j D^{\bm{i}}\bm{Q}_j (\bm{x}_T) = D^{\bm{i}} \bm{V}_{\rm exact} (\bm{x}_T),
$$
and a solution always exists to this $(p+1)(p+2)(p+3)\times 2(p+1)(p+3)$ %you are right \IF{Shouldn't be $(p+1)(p+2)(p+3)\times 2(p+1)(p+3)$?} 
linear system according to Proposition~\ref{prop:TaylinQT}.
The second expression above naturally follows from the standard expression of Taylor polynomials in the canonical shifted monomial polynomial basis,
% or equivalently using the standard shifted monomial polynomial basis FORMULA?
% $$
%     \Leftrightarrow
%     \forall \bm{i}  \in\mathbb N_0^3, |\bm{i}\leq p, \sum_{j=1}^{2(p+1)(p+3)} \alpha_j D^{\bm{i}}\bm{Q}_j (\bm{x}_T) = D^{\bm{i}} \bm{Q}_{\rm exact} (\bm{x}_T).
% $$
and it is possible to compute the Least-Squares solution of any corresponding matrix system numerically.
In practice though, according to Theorem~\ref{th:construction QT function}, any QT function is uniquely determined by its harmonic component; it is therefore sufficient to match the harmonic components of the Taylor polynomials of degree $p$ of $\bm{V}_{\rm approx}$ and $\bm{V}_{\rm exact}$. Thanks to a harmonic basis, this can then be written as an equivalent square $2(p+1)(p+3)\times 2(p+1)(p+3)$ system. 
Here, Algorithm \ref{alg:HVF basis construction} from canonical bases for $ \mathbb C^{2k+3}$ constructs a harmonic basis, and a convenient numbering leads to the resulting equivalent square system's matrix into the identity.%pas clair du tout\modTC{For instance, using the same basis of $\harm{p}\times \harm{p}$ to represent the harmonic coefficients of the Taylor polynomial of $\bm{V}_{\rm approx}$ and to construct the QT basis $\{\bm{Q}_j\}_{j\in\indices{1}{2(p+1)(p+3)}}$ with the canonical initialization \lmig{I dont think that anyone except you and me would understand that}, then the resulting matrix is simply the identity matrix.}\IF{What about this description to conclude the discussion about how to find the approximation?}
%\sout{For instance,  REPRENDRE ICI}
%%}
%\lmig{now what exactly do you want to say about your choice of harmonic basis? is it really so that the square system described above can be written with an identity matrix?}
%\IF{In practice, once I have the Taylor coefficients of the exact solution (computed "by hand") in the canonical basis, I solve $p+1$ square systems of dimension $\frac{3}{2} (p+1)(p+2)\lmig{k?} = \dim\hompol{k}$ to get the coefficients in the $\homharm{k}$-$\homcurlfreebis{k}$-$\homdivfreebis{k}$ basis. Then, since the order of harmoni basis functions "corresponds" to the one used for the initialization for the basis of the QT space, the matrix of the square system is trivially the identity.}

\begin{remark}
    The central role played in this computational approach by harmonic components of polynomial vector fields is inherently tied to the definition of the quasi-Trefftz operator, see Remark \ref{rmk:qTop}, and more precisely to its so-called principal part, defined as
    $$
        \mathcal P_p:(\funel{Q}, \funmg{Q}) \in (\pol{p})^3\times (\pol{p})^3 \ \mapsto\ \left(
        \begin{aligned}
            &\curl \funel{Q}  \\
            &\curl \funmg{Q} \\
            &\permitt_0\diver \funel{Q}  \\
            &\permeab_0\diver \funmg{Q} 
        \end{aligned}\right).
    $$
    The main point is that the space of harmonic vector fields is in bijection with the kernel of $\mathcal P_p$. Therefore, they can be conveniently leveraged both to construct individual quasi-Trefftz functions, in the construction of bases, and to match two quasi-Trefftz functions, such as $\bm V_{\rm approx}$ and $\taylortrunsc{p}{\,\bm{x}_T} [\bm V_{\rm exact}]$.
    %On the one hand, harmonic components are the degrees of freedom in Algorithms \ref{alg:QT construction} and \ref{alg:altQT construction} to define an individual quasi-Trefftz function, and a given harmonic basis uniquely defines a quasi-Trefftz basis from Theorem \ref{thm:QTbasis}.
    %On the other hand, harmonic components of Taylor polynomials are ... to construct a quasi-Trefftz approximation to $\bm V_{\rm exact}$ from a quasi-Trefftz basis. 
    %%DESCRIBE THE FUNDAMENTAL PROPERTY OF HARM VECT FIELDS SPACE  %convenient space in bijection with the kernel of the curl operator
    %For comparison, in the scalar case, this approach is the same as SIMILAR TO THE POLYNOMIAL CASE but not the GPW case
    %\IF{Which is the main point of this remark?}\lmig{to highlight what is parallel/different compared with the scalar work}
\end{remark}

In order to assess the high-order $h$-convergence of this approximation, the error of interest is the following %\lmig{explain before giving the formula?}
%\modTC{
\begin{equation}\label{eq:error definition}
    \begin{split}
        \bm{err}(r, T) \coloneqq&\ \lVert \el_{\rm exact} - \el_{\text{approx}} \rVert_{\infty, \mathcal B(\bm{x}_T, r)} + \lVert \mg_{\rm exact} - \mg_{\text{approx}} \rVert_{\infty, \mathcal B(\bm{x}_T, r)} \\ 
        =&\ \sup_{\lVert\bm{x} - \bm{x}_T\rVert \leq r}  \lVert \el_{\rm exact}(\bm{x}) - \el_{\text{approx}} (\bm{x}) \rVert + \sup_{\lVert\bm{x} - \bm{x}_T\rVert \leq r}  \lVert \mg_{\rm exact}(\bm{x}) - \mg_{\text{approx}} (\bm{x}) \rVert.
    \end{split}
\end{equation}
%}

In a QT-DG method, local spaces are constructed on each mesh element.
Here, for a fixed value of $p$, consider the domain $[-1, 1]^3$, and the mesh $\mathcal T_h$ with 1296 tetrahedra and $h\approx 0.5774$ depicted in Figure~\ref{subfig:Cube6-elements}. For each element $T\in\mathcal{T}_h$, the point $\bm{x}_T\in T$ is chosen as the centroid of $T$. Figure~\ref{subfig:Cube6-centroids} displays the element centroids.
The procedure described above is applied locally at each $\bm{x}_T\in T$, and the errors reported below are the maximum error over all $\bm{x}_T$: 
%\modTC{
\begin{equation}\label{eq:max error definition}
    \bm{err}(r) \coloneqq \max_{T\in\mathcal{T}_h} \bm{err}(r, T).
\end{equation}
%}
%\lmig{add an explicit formula}
% 
For values of $p$ from $1$ to $10$, all results show convergence of order $p+1$, matching the estimate from Theorem \ref{th:approximation properties QTopt}.

\begin{figure}[ht]
    \centering
    \begin{subfigure}{0.35\textwidth}
        \includegraphics[width=\textwidth]{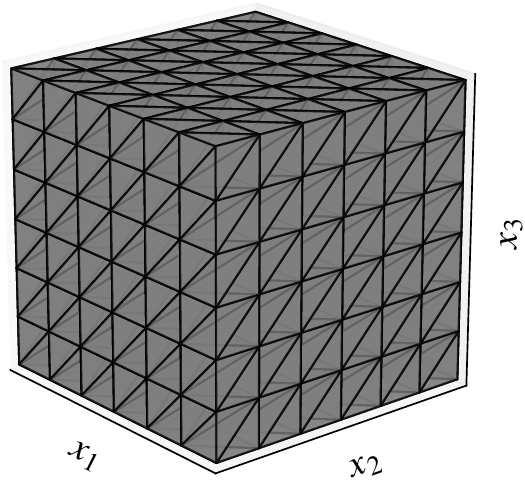}%{Figures/Meshes/Cube6/mesh_from_tetrahedrons2_crop.pdf}
        \caption{Mesh elements}
        \label{subfig:Cube6-elements}
    \end{subfigure}
    \hspace{1.5cm}
    \begin{subfigure}{0.35\textwidth}
        \includegraphics[width=\textwidth]{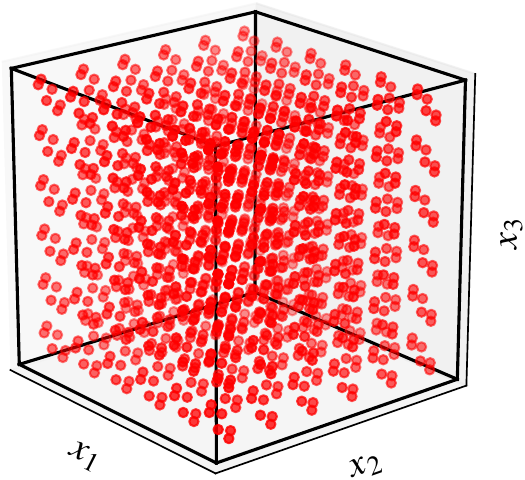}%{Figures/Meshes/Cube6/mesh_from_tetrahedrons3_crop.pdf}
        \caption{Centroids}
        \label{subfig:Cube6-centroids}
    \end{subfigure}
    \caption{Mesh for the numerical simulations on the domain $[-1, 1]^3$: elements ({\it left}) and centroids ({\it right}).}
    \label{fig:Cube6}
\end{figure}

\subsection{Test cases and approximation results}\label{subsec:test cases}

For this first numerical test case, consider the following constant coefficient setting: $\kappa = 30$, $\permitt = 1$, $\permeab = 1$, 
and the solution
\begin{equation*}
    \el_{\rm exact}(\bm{x}) = \begin{pmatrix}
        0 \\ 0 \\ e^{\imag \kappa x_2}
    \end{pmatrix}, 
    \qquad\qquad
    \mg_{\rm exact}(\bm{x}) = \begin{pmatrix}
        e^{\imag \kappa x_2} \\ 0 \\ 0
    \end{pmatrix}.
\end{equation*}

\begin{figure}[htbp]
    \centering
    \begin{subfigure}{0.55\textwidth}
        \centering
        \resizebox{!}{0.6\textwidth}{%
        \input{maxwell_errors_sum_3}%{Figures/Test_Mesh_3/maxwell_errors_sum_3}
        }
        \caption{Constant coefficient test.}
        \label{fig:convergence test_mesh_3 plots}
    \end{subfigure}
    \begin{subfigure}{0.4\textwidth}
        \centering
        \resizebox{!}{0.8\textwidth}{%
        \input{maxwell_errors_sum_4}%{Figures/Test_Mesh_4/maxwell_errors_sum}
        }
        \caption{Variable coefficient test.}
        \label{fig:convergence test_mesh_4 plots}
    \end{subfigure}
    \\
    \begin{subfigure}{\textwidth}
        \bigskip
        \centering
        \resizebox{!}{0.18\textwidth}{%
        \begin{tabular}{|c|c|c|c|c|c|c|c|c|c|c|}
            \hline
            Radius $r$ & $p=1$ & $p=2$ & $p=3$ & $p=4$ & $p=5$ & $p=6$ & $p=7$ & $p=8$ & $p=9$ & $p=10$ \\
            \hline\hline
            $\lambda/2$ & 0.76 & 2.27 & 3.50 & 4.63 & 5.71 & 6.77 & 7.81 & 8.84 & 9.86 & 10.88 \\
            $\lambda/2^2$ & 1.69 & 2.80 & 3.86 & 4.90 & 5.92 & 6.94 & 7.95 & 8.96 & 9.96 & 10.97 \\
            $\lambda/2^3$ & 1.93 & 2.95 & 3.96 & 4.97 & 5.98 & 6.98 & 7.99 & 8.99 & 9.99 & 10.99 \\
            $\lambda/2^4$ & 1.98 & 2.99 & 3.99 & 4.99 & 5.99 & 7.00 & 8.00 & 9.00 & 10.00 & 10.99 \\
            $\lambda/2^5$ & 2.00 & 3.00 & 4.00 & 5.00 & 6.00 & 7.00 & 8.00 & 9.00 & 9.97 & - \\
            $\lambda/2^6$ & 2.00 & 3.00 & 4.00 & 5.00 & 6.00 & 7.00 & 8.00 & - & - & - \\
            $\lambda/2^7$ & 2.00 & 3.00 & 4.00 & 5.00 & 6.00 & 6.98 & - & - & - & - \\
            $\lambda/2^8$ & 2.00 & 3.00 & 4.00 & 5.00 & 6.00 & - & - & - & - & - \\
            % $\lambda/2^9$ & 2.00 & 3.00 & 4.00 & 5.00 & 5.69 & - & - & - & - & - \\
            \hline
        \end{tabular}
        }
        % \caption{Table with convergence rates for the total error.}
        \caption{Constant coefficient test.}
        \label{fig:table test_mesh_3}
    \end{subfigure}
    \\
    \begin{subfigure}{\textwidth}
        \bigskip
        \centering
        \resizebox{!}{0.18\textwidth}{%
        \begin{tabular}{|c|c|c|c|c|c|c|c|c|c|c|}
            \hline
            Radius $r$ & $p=1$ & $p=2$ & $p=3$ & $p=4$ & $p=5$ & $p=6$ & $p=7$ & $p=8$ & $p=9$ & $p=10$ \\
            \hline\hline
            % $\lambda/2$ & 0.81 & 1.96 & 2.95 & 4.01 & 5.00 & 6.15 & 7.15 & 8.27 & 9.32 & 10.36 \\
            % $\lambda/2^2$ & 0.88 & 1.81 & 3.02 & 4.20 & 5.39 & 6.46 & 7.62 & 8.62 & 9.69 & 10.63 \\
            $\lambda/2^3$ & 0.98 & 2.45 & 3.68 & 4.76 & 5.86 & 6.88 & 7.84 & 8.83 & 9.86 & 10.89 \\
            $\lambda/2^4$ & 1.86 & 2.92 & 3.98 & 5.00 & 5.95 & 6.93 & 7.96 & 8.96 & 9.97 & 10.98 \\
            $\lambda/2^5$ & 2.04 & 3.05 & 4.00 & 4.97 & 6.00 & 6.98 & 8.00 & 8.99 & 10.00 & 11.00 \\
            $\lambda/2^6$ & 2.03 & 2.99 & 4.01 & 4.99 & 6.01 & 7.00 & 8.00 & 9.00 & 10.00 & 10.75 \\
            $\lambda/2^7$ & 2.02 & 3.00 & 4.01 & 5.00 & 6.00 & 7.00 & 8.00 & 8.80 & - & - \\
            $\lambda/2^8$ & 2.01 & 3.00 & 4.00 & 5.00 & 6.00 & 7.00 & - & - & - & - \\
            $\lambda/2^9$ & 2.00 & 3.00 & 4.00 & 5.00 & 6.00 & - & - & - & - & - \\
            $\lambda/2^{10}$ & 2.00 & 3.00 & 4.00 & 5.00 & 5.90 & - & - & - & - & - \\
            \hline
        \end{tabular}
        }
        % \caption{Table with convergence rates for the total error.}
        \caption{Variable coefficient test.}
        \label{fig:table mesh_test_4}
    \end{subfigure}
    \caption{Error convergence for the two numerical tests of Subsection~\ref{subsec:test cases} with the setup described in Subsection~\ref{subsec:setup}: convergence plots ({\it top}) and convergence rates tables ({\it middle} and {\it bottom}). %$\kappa = 30$, $\permitt = 1$, $\permeab = 1$, $\el(\bm{x}) = (0, 0, e^{\imag \kappa x_2})^T$, $\mg(\bm{x}) = (e^{\imag \kappa x_2} , 0, 0)^T$. 
    In the tables, $\lambda = 2\pi/\kappa$ is the wavelength. The results have been obtained by implementing Algorithm~\ref{alg:altQT construction}.}
    \label{fig:convergence test_mesh_3}
\end{figure}

For the second test case, we consider the following setting with $\kappa = 5$, $\permeab(\bm{x}) = \permeab = 1$ and a variable coefficient: $\permitt(\bm{x}) = 11 - x_1$, and the exact solution
\begin{equation}\label{eq:exact sol-variable coeffs}
     \el_{\rm exact}(\bm{x}) = \begin{pmatrix}
         0 \\ 0 \\ \airy\left(\kappa^{2/3} (x_1-10)\right) e^{\imag \kappa x_2}
     \end{pmatrix},
     \qquad\quad
     \mg_{\rm exact}(\bm{x}) = \begin{pmatrix}
         \airy\left(\kappa^{2/3} (x_1-10)\right) e^{\imag \kappa x_2} \\ \imag \kappa^{-1/3} \airy'\left(\kappa^{2/3} (x_1-10)\right) e^{\imag \kappa x_2} \\ 0
     \end{pmatrix}.
\end{equation}
Here, $\airy(x)$ is the Airy function, i.e., the solution of the differential equation $y''(x) - xy(x) = 0$.

%\subsection{Convergence results}

Figure~\ref{fig:convergence test_mesh_3} shows the results of the numerical simulations using Algorithm~\ref{alg:altQT construction}. In both cases, for the different values of the degree $p$, the error $\bm{err}(r)$ \eqref{eq:max error definition} decreases as $r$ decreases until it reaches machine precision. Additionally, the tables contain the convergence rates, matching the $p+1$ asymptotic from the theoretical result, see Theorem~\ref{th:approximation properties QTopt}.

\begin{remark}[Comparing two Computational time comparison for QT bases construction] 
    While the choice of algorithm did not appear to make any significant difference in the errors described above, a minor difference appeared in terms of computational time. 
    During the preprocessing step to construct bases for the harmonic, solenoidal and irrotationl spaces, 
    the computational time to construct the spaces $\mathbb S^+_p/\mathbb I^+_p$, required for Algorithm~\ref{alg:altQT construction}, is consistently lower than that required for $\divfree{p}/\curlfree{p}$, required for Algorithm~\ref{alg:QT construction}. This is due to the fact that the latter involves computing an SVD for each $k$ from 1 to $p$.
    Yet, as highlighted in Remark~\ref{rem:HSI independence of x0}, these spaces are independent of the center $\bm{x}_0$, hence their bases are computed only once in the context of a QT-DG method as opposed to the quasi-Trefftz bases that are constructed on each individual element.   
    As for the computational times recorded from running all tests comparing Algorithms~\ref{alg:altQT construction} and ~\ref{alg:QT construction} for the construction of quasi-Trefftz bases, they did not show any significant difference, so they are not reported here.
\end{remark}
\begin{table}[htbp]
    \centering
    \begin{tabular}{|c||c|c|c|c|c|}
    \hline
        $p$ & 1 & 2 & 3 & 4 & 5 \\ \hline
        $\harm{p}$ & $9.53\times10^{-4}$ & $1.51\times10^{-3}$  & $2.52\times10^{-3}$ & $4.45\times10^{-3}$ & $2.05\times10^{-2}$ \\ \hline
        $\mathbb S^+_p$ & $1.00\times10^{-5}$ & $3.60\times10^{-5}$
        & $7.00\times10^{-5}$ & $1.83\times10^{-4}$ & $7.86\times10^{-4}$ \\%& $1.18\times10^{-3}$ & $1.40\times10^{-3}$ & $2.43\times10^{-3}$ & $4.44\times10^{-3}$ & $1.08\times10^{-2}$ \\ 
        $\divfree{p}$ & $5.87\times10^{-4}$ & $1.68\times10^{-3}$
        & $7.10\times10^{-3}$ & $6.31\times10^{-2}$
        & $4.79\times10^{-2}$ \\% & $5.97\times10^{-2} & $1.45\times10^{-1}$ & $3.69\times10^{-1}$ & $5.47\times10^{-1}$ & $1.04\times10^{0}$ \\
        \hline
        $\mathbb I^+_p$ & $1.00\times10^{-5}$
        & $2.60\times10^{-5}$ & $3.90\times10^{-5}$ & $9.00\times10^{-5}$ & $3.78\times10^{-4}$ \\ %& $4.32\times10^{-4}$ & $6.24\times10^{-4}$ & $1.25\times10^{-3}$ & $2.32\times10^{-3}$ & $3.77\times10^{-3}$ \\
        $\curlfree{p}$ & $5.68\times10^{-4}$
        & $1.62\times10^{-3}$ & $4.54\times10^{-3}$ & $5.81\times10^{-2}$ & $1.02\times10^{-1}$ \\% & $4.39\times10^{-2}$ & $9.76\times10^{-2}$ & $1.69\times10^{-1}$ & $4.51\times10^{-1}$ & $6.42\times10^{-1}$ \\

        \hline\hline
        $p$ & 6 & 7 & 8 & 9 & 10 \\ \hline
        $\harm{p}$ & $2.83\times10^{-2}$ & $2.11\times10^{-2}$ & $5.83\times10^{-2}$ & $7.64\times10^{-2}$ & $7.36\times10^{-2}$ \\ \hline
        $\mathbb S^+_p$ & %$1.00\times10^{-5}$ & $3.60\times10^{-5}$ & $7.00\times10^{-5}$ & $1.83\times10^{-4}$ & $7.86\times10^{-4}$ & 
        $1.18\times10^{-3}$ & $1.40\times10^{-3}$ & $2.43\times10^{-3}$ & $4.44\times10^{-3}$ & $1.08\times10^{-2}$ \\ 
        $\divfree{p}$ & %$5.87\times10^{-4}$ & $1.68\times10^{-3}$ & $7.10\times10^{-3}$ & $6.31\times10^{-2}$ & $4.79\times10^{-2}$ & 
        $5.97\times10^{-2}$ & $1.45\times10^{-1}$ & $3.69\times10^{-1}$ & $5.47\times10^{-1}$ & $1.04\times10^{0}$ \\ \hline
        $\mathbb I^+_p$ & %$1.00\times10^{-5}$ & $2.60\times10^{-5}$ & $3.90\times10^{-5}$ & $9.00\times10^{-5}$ & $3.78\times10^{-4}$ & 
        $4.32\times10^{-4}$ & $6.24\times10^{-4}$ & $1.25\times10^{-3}$ & $2.32\times10^{-3}$ & $3.77\times10^{-3}$ \\
        $\curlfree{p}$ & %$5.68\times10^{-4}$ & $1.62\times10^{-3}$ & $4.54\times10^{-3}$ & $5.81\times10^{-2}$ & $1.02\times10^{-1}$ & 
        $4.39\times10^{-2}$ & $9.76\times10^{-2}$ & $1.69\times10^{-1}$ & $4.51\times10^{-1}$ & $6.42\times10^{-1}$ \\ \hline
    \end{tabular}
    \caption{Computation time to construct bases for the harmonic $\harm{p}$, solenoidal $\mathbb S^+_p$ and $\divfree{p}$, and irrotational $\mathbb I^+_p$ and $\curlfree{p}$ spaces, then leveraged in Algorithm \ref{alg:QT construction} for $*$ spaces and in Algorithm \ref{alg:altQT construction} for $+$ spaces. All times are reported in seconds.}
    \label{tab:computational time}
\end{table}

\section{Conclusion}
Given a PDE $\mathcal L u = 0$,
QT operators are defined as the composition of Taylor truncation and the differential operator $\mathcal L$. In the case of a scalar PDE, the principal part of QT operators is automatically surjective \cite{Imbert-Gerard_2025-2}; in the case of a vector-valued PDE, this is not true in general, and in particular it is not true for Maxwell's equations due to properties of the curl operator.
This is the fundamental difference between the scalar and vector cases.

In this work on time-harmonic Maxwell's equations, 
the QT space definition leverages divergence conditions to define QT operators whose principal part is surjective. Together with Helmholtz decompositions and exact sequences, this plays a central role in the construction of both individual QT functions and QT bases.
%optimality?
The optimal dimension of the proposed space also relies strongly on divergence properties.

In conclusion, rather than a systematic approach as in the scalar case, it is a deep understanding of the structure of Maxwell's equations, based on divergence conditions and polynomial de Rham complexes, that enables this study of optimal QT space.

\section{Acknowledgments}
This material is based upon work supported by the U.S. Department of Energy, Office of Science, Office of Advanced Scientific Computing Research, under Award Number DE-SC0024246.

 \appendix

\section{Properties of Taylor polynomials}

The study of the QT space $\QTopt{p}(T)$ in Sections~\ref{sec:quasi-Trefftz} and \ref{sec:optimal properties} relies heavily on the following simple yet fundamental relations between Taylor polynomial operators and differential operators.
%
%The study of quasi-Trefftz spaces in Sections~\ref{sec:quasi-Trefftz} and \ref{sec:optimal properties} heavily relies on various relations between Taylor polynomial operators and differential operators.
%%The aim of this subsection is to summarize I TRY TO AVOID SENTENCES LIKE THAT, THEY ARE A LITTLE HEAVY
%Here are summarized some of these simple yet fundamental properties. % of Taylor polynomials \eqref{eq:scalar Taylor polynomial} and \eqref{eq:vector Taylor polynomial} useful for studying quasi-Trefftz spaces. %that will be used for proving the results of Sections~\ref{sec:quasi-Trefftz} and \ref{sec:optimal properties}.
%
The first result focuses on single partial derivative operators;
the second one focuses on the differential operators from the de Rham complex;
the last one focuses on variable-coefficient operators.

\begin{proposition}[Derivatives and Taylor polynomials]\label{prop:multi-index derivative}
    Let $U\subseteq \mathbb R^3$ be an open set, $\bm{x}_0\in U$, and $s\in\mathbb N$.
    \begin{enumerate}[(a)]
        \item If %$f\in\mathcal C^s(D)$ is a complex-valued scalar function, then
        $f\in C^s(U; \mathbb C)$, then for all $\bm{i}\in\mathbb N_0^3$ \label{it:Taylor+derivation}
        % \begin{equation*}
        %     D^{\bm{i}} \taylortrunsc{s}{\,\bm{x}_0} [f] = \taylortrunsc{s-\lvert\bm{i}\rvert}{\,\bm{x}_0} [D^{\bm{i}} f]
        %     \qquad\qquad
        %     \forall \bm{i}\in\mathbb N_0^3, \lvert\bm{i}\rvert \leq s.
        % \end{equation*}
        \begin{equation*}
            D^{\bm{i}} \taylortrunsc{s}{\,\bm{x}_0} [f] = 
            \begin{cases}
                \taylortrunsc{s-\lvert\bm{i}\rvert}{\,\bm{x}_0} \left[D^{\bm{i}} f \right]
                &\qquad \text{ if } \lvert\bm{i}\rvert \leq s, \\
                0 &\qquad \text{ if } \lvert\bm{i}\rvert > s.
            \end{cases}
        \end{equation*}
        
        \item If $f \in C^s(U; \mathbb C)$, then for all $\bm{i}\in\mathbb N_0^3$ \label{it:derivative scalar Taylor}
        \begin{equation*}%\label{eq:derivative values-scalar}
            D^{\bm{i}} \taylortrunsc{s}{\,\bm{x}_0}[f](\bm{x}_0) = \begin{cases}
                D^{\bm{i}} f(\bm{x}_0) &\quad \text{if}\ \lvert\bm{i}\rvert \leq s, \\
                0 &\quad \text{if}\ \vert\bm{i}\rvert > s.
            \end{cases}
        \end{equation*}

        \item If $\bm{F}\in C^s(U;\mathbb C^3)$, then for all $\bm{i}\in\mathbb N_0^3$ \label{it:derivative vector Taylor}
        \begin{equation*}%\label{eq:derivative values-vector}
            D^{\bm{i}} \taylortrun{s}{\,\,\bm{x}_0}[\bm{F}](\bm{x}_0) = \begin{cases}
                D^{\bm{i}} \bm{F}(\bm{x}_0) &\quad \text{if}\ \lvert\bm{i}\rvert \leq s, \\
                \bm{0} &\quad \text{if}\ \vert\bm{i}\rvert > s.
            \end{cases}
        \end{equation*}
    \end{enumerate}
\end{proposition}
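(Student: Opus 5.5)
The plan is to prove item (a) first, by a direct computation on monomials, and then obtain (b) and (c) as immediate consequences. All three items rest on one elementary identity for shifted monomials. For multi-indices $\bm{j},\bm{i}\in\mathbb N_0^3$,
\begin{equation*}
D^{\bm{i}}(\bm{X}-\bm{x}_0)^{\bm{j}} = \frac{\bm{j}!}{(\bm{j}-\bm{i})!}(\bm{X}-\bm{x}_0)^{\bm{j}-\bm{i}} \quad\text{if } \bm{i}\leq\bm{j},
\end{equation*}
and $D^{\bm{i}}(\bm{X}-\bm{x}_0)^{\bm{j}}=0$ otherwise. This follows componentwise from the one-dimensional rule $\partial^{i}(x-a)^{j} = \frac{j!}{(j-i)!}(x-a)^{j-i}$, which vanishes when $i>j$.

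For (a), apply $D^{\bm{i}}$ termwise to the definition \eqref{eq:scalar Taylor polynomial}. Only the indices with $\bm{j}\geq\bm{i}$ survive. If $\lvert\bm{i}\rvert>s$, no $\bm{j}$ with $\lvert\bm{j}\rvert\leq s$ can satisfy $\bm{j}\geq\bm{i}$, so the result is $0$. Otherwise, substitute $\bm{m}=\bm{j}-\bm{i}$, so that $\lvert\bm{m}\rvert\leq s-\lvert\bm{i}\rvert$. The factor $\frac{1}{\bm{j}!}\cdot\frac{\bm{j}!}{\bm{m}!}$ reduces to $\frac{1}{\bm{m}!}$, and the coefficient $D^{\bm{j}}f(\bm{x}_0)=D^{\bm{m}}(D^{\bm{i}}f)(\bm{x}_0)$. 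The resulting sum is exactly $\taylortrunsc{s-\lvert\bm{i}\rvert}{\,\bm{x}_0}[D^{\bm{i}}f]$. One point needs care: $D^{\bm{i}}f\in C^{s-\lvert\bm{i}\rvert}(U;\mathbb C)$, and partial derivatives of order at most $s$ commute, so the identification $D^{\bm{m}}D^{\bm{i}}f = D^{\bm{m}+\bm{i}}f$ holds.

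For (b), evaluate (a) at $\bm{x}_0$. Every monomial $(\bm{X}-\bm{x}_0)^{\bm{m}}$ with $\bm{m}\neq\bm{0}$ vanishes there, so only the constant term of $\taylortrunsc{s-\lvert\bm{i}\rvert}{\,\bm{x}_0}[D^{\bm{i}}f]$ remains, and it equals $D^{\bm{i}}f(\bm{x}_0)$. When $\lvert\bm{i}\rvert>s$ the value is $0$ directly from (a).

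For (c), apply (b) to each component, since \eqref{eq:vector Taylor polynomial} defines the vector Taylor polynomial componentwise and $D^{\bm{i}}$ acts componentwise.

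There is no genuine obstacle in this argument. The only places needing attention are the index bookkeeping in the reindexing step of (a) (keeping track of the constraint $\bm{j}\geq\bm{i}$ together with $\lvert\bm{j}\rvert\leq s$) and the regularity remark justifying the commutation of derivatives.
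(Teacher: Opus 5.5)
Your proof is correct and follows the intended route. The paper gives no written proof and treats these identities as standard, pointing to Brenner--Scott for (a). Your argument matches what it has in mind: differentiate the explicit Taylor formula termwise, reindex with $\bm{m}=\bm{j}-\bm{i}$ (including your explicit note that $f\in C^s$ justifies $D^{\bm{m}}D^{\bm{i}}f=D^{\bm{m}+\bm{i}}f$), evaluate at $\bm{x}_0$ for (b), and argue componentwise for (c).
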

% \lmig{I am wondering if we should skip the proof(s)}
% \begin{proof}
%     Properties (a) and (b) directly follow from the explicit  \eqref{eq:scalar Taylor polynomial} of the Taylor polynomial, see e.g. \cite[Subsection 2.2]{Imbert-Gerard:25-polyonmial}.
%     Property (c) follows immediately from (b).
% \end{proof}
%\IF{I removed the proof}
Of course, these results hold in any dimension and for more general function spaces.
For reference, see \cite{BOOK_Brenner_2008} for result (a) on Sobolev spaces.

\begin{proposition}[Divergence, curl and gradient properties]\label{prop:divergence+taylor}
    Let $U\subseteq \mathbb R^3$ be an open set, $\bm{x}_0\in U$, and $s\in\mathbb N$.
    \begin{enumerate}[(a)]
        \item\label{DIVnTAYL} If %$\bm{F}\in C^s(D)$ is a vector field with values in $\mathbb C^3$, then
        $\bm{F} \in C^{s}(U; \mathbb C^3)$, then \label{it:Taylor-divergence condition}
        \begin{equation*}
            \diver \taylortrun{s}{\ \bm{x}_0} [\bm{F}] = \taylortrunsc{s-1}{\,\bm{x}_0} [\diver \bm{F}].
        \end{equation*}

        \item \label{it:curl-Taylor action} If %$\bm{F}\in C^{s}(D)$ is a vector field with values in $\mathbb C^3$, then 
        $\bm{F}\in C^s(U; \mathbb C^3)$, then for all $\bm{i}\in\mathbb N_0^3$
        % \begin{equation*}
        %     D^{\bm{i}} \curl \taylortrun{s}{\,\,\bm{x}_0} [\bm{F}](\bm{x}_0) = D^{\bm{i}} \curl \bm{F} (\bm{x}_0)
        %     \qquad\qquad \forall \lvert\bm{i}\rvert \leq s-1.
        % \end{equation*}
        \begin{equation*}
            D^{\bm{i}} \curl \taylortrun{s}{\,\,\bm{x}_0} [\bm{F}](\bm{x}_0) = 
            \begin{cases}
                D^{\bm{i}} \curl \bm{F} (\bm{x}_0)
                &\qquad \text{ if } \lvert\bm{i}\rvert \leq s-1, \\
                \bm{0} &\qquad \text{ if } \vert\bm{i}\rvert > s-1.
            \end{cases}
        \end{equation*}

        \item If %$\bm{F}\in C^{s}(D)$ is a vector field with values in $\mathbb C^3$, then 
        $\bm{F}\in C^s(U; \mathbb C^3)$, then for all $\bm{i}\in\mathbb N_0^3$ \label{it:divergence-Taylor action}
        % \begin{equation*}
        %     D^{\bm{i}} \diver \taylortrun{s}{\,\,\bm{x}_0} [\bm{F}](\bm{x}_0) = D^{\bm{i}} \diver \bm{F} (\bm{x}_0)
        %     \qquad\qquad \forall \lvert\bm{i}\rvert \leq s-1.
        % \end{equation*}
        \begin{equation*}
            D^{\bm{i}} \diver \taylortrun{s}{\,\,\bm{x}_0} [\bm{F}](\bm{x}_0) = 
            \begin{cases}
                D^{\bm{i}} \diver \bm{F} (\bm{x}_0)
                &\qquad \text{ if } \lvert\bm{i}\rvert \leq s-1, \\
                0 & \qquad \text{ if } \lvert\bm{i}\rvert > s-1.
            \end{cases}
        \end{equation*}
    \end{enumerate}
\end{proposition}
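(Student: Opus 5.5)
The plan is to reduce all three items to Proposition~\ref{prop:multi-index derivative}, applied componentwise. The key point is that each of the operators $\diver$ and $\curl$ is a linear combination of first-order partial derivatives $\partial_{x_l} = D^{\bm{e}_l}$. Such a derivative commutes with Taylor truncation at the cost of lowering the degree by one.

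For item (a), I would write $\diver \taylortrun{s}{\ \bm{x}_0}[\bm F] = \sum_{l=1}^3 \partial_{x_l}\taylortrunsc{s}{\,\bm{x}_0}[F_l]$. Proposition~\ref{prop:multi-index derivative}-(a) with $\bm i = \bm e_l$ (so $|\bm i| = 1\le s$, since $s\in\mathbb N$) gives $\partial_{x_l}\taylortrunsc{s}{\,\bm{x}_0}[F_l] = \taylortrunsc{s-1}{\,\bm{x}_0}[\partial_{x_l}F_l]$. Since $\taylortrunsc{s-1}{\,\bm{x}_0}$ is linear, summing over $l$ yields $\taylortrunsc{s-1}{\,\bm{x}_0}[\diver\bm F]$. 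This step requires $\partial_{x_l}F_l\in C^{s-1}$, which holds because $\bm F\in C^s$.

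For item (b), each component of $\curl\taylortrun{s}{\ \bm{x}_0}[\bm F]$ has the form $\partial_{x_a}\taylortrunsc{s}{\,\bm{x}_0}[F_b]-\partial_{x_b}\taylortrunsc{s}{\,\bm{x}_0}[F_a]$. The same argument as in (a) shows that $\curl\taylortrun{s}{\ \bm{x}_0}[\bm F] = \taylortrun{s-1}{\ \bm{x}_0}[\curl\bm F]$ as polynomials. I would then apply $D^{\bm i}$ and evaluate at $\bm x_0$ using Proposition~\ref{prop:multi-index derivative}-(c) with $s-1$ in place of $s$ and $\curl\bm F\in C^{s-1}$ in place of $\bm F$. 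This gives $D^{\bm i}\curl\bm F(\bm x_0)$ when $|\bm i|\le s-1$ and $\bm 0$ otherwise. Item (c) follows identically from item (a) combined with Proposition~\ref{prop:multi-index derivative}-(b).

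The only subtle point is the edge case $s=1$. Here $s-1=0$, so Proposition~\ref{prop:multi-index derivative} is used with degree $0$ and regularity $C^0$, whereas it is stated for $s\in\mathbb N$. For this case I would instead check directly from the explicit formula~\eqref{eq:scalar Taylor polynomial}. The derivative of the affine Taylor polynomial is the constant $\partial_{x_l}F_b(\bm x_0)$, which is exactly $\taylortrunsc{0}{\,\bm{x}_0}[\partial_{x_l}F_b]$, and all of its higher derivatives vanish. Apart from this, the argument is routine bookkeeping with linearity of the Taylor operator.
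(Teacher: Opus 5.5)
Your proof is correct and follows essentially the paper's argument: you reduce each item componentwise to Proposition~\ref{prop:multi-index derivative}, and your item (a) is exactly the paper's. The only difference is in (b) and (c). You first prove the polynomial identity $\curl \taylortrun{s}{\ \bm{x}_0}[\bm F] = \taylortrun{s-1}{\ \bm{x}_0}[\curl \bm F]$ and then evaluate derivatives at degree $s-1$, whereas the paper applies Proposition~\ref{prop:multi-index derivative}-(b) directly to the shifted indices $\bm i+\bm e_j$ at degree $s$, which avoids the $s=1$ edge case you had to check by hand.
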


%\IF{We may add something with variable coefficients and properties of Taylor operator}

\begin{proof}
Property \ref{DIVnTAYL} follows from Proposition~\ref{prop:multi-index derivative}-\ref{it:Taylor+derivation} applied to each $\bm{i}\in\{\bm{e}_1, \bm{e}_2, \bm{e}_3\}$ component of the vector field $\bm{F}$:
    %(a) From Proposition~\ref{prop:multi-index derivative}-\ref{it:Taylor+derivation} applied to each component of $\bm{F}$ with $\bm{i}\in\{\bm{e}_1, \bm{e}_2, \bm{e}_3\}$, we get
    \begin{equation*}
        \begin{split}
            \diver \taylortrun{s}{\ \bm{x}_0} [\bm{F}] 
            =&\ \partial_x \taylortrunsc{s}{\,\bm{x}_0} [F_1] + \partial_y \taylortrunsc{s}{\,\bm{x}_0} [F_2] + \partial_z \taylortrunsc{s}{\,\bm{x}_0} [F_3] \\
            =&\ \taylortrunsc{s-1}{\,\bm{x}_0} \left[\partial_x F_1\right] + \taylortrunsc{s-1}{\,\bm{x}_0} \left[\partial_y F_2\right] + \taylortrunsc{s-1}{\,\bm{x}_0} \left[\partial_z F_3\right]
            = \taylortrunsc{s-1}{\,\bm{x}_0} [\diver \bm{F}].
        \end{split}
    \end{equation*}
%
    %\noindent
    Since
    \begin{equation*}
            D^{\bm{i}} \curl \taylortrun{s}{\,\,\bm{x}_0} [\bm{F}] (\bm{x}_0)
            =\ \begin{pmatrix}
                D^{\bm{i} + \bm{e}_2} \taylortrunsc{s}{\,\bm{x}_0} [F_3] (\bm{x}_0) - D^{\bm{i} + \bm{e}_3} \taylortrunsc{s}{\,\bm{x}_0} [F_2] (\bm{x}_0) \\
                D^{\bm{i} + \bm{e}_3} \taylortrunsc{s}{\,\bm{x}_0} [F_1] (\bm{x}_0) - D^{\bm{i} + \bm{e}_1} \taylortrunsc{s}{\,\bm{x}_0} [F_3] (\bm{x}_0) \\
                D^{\bm{i} + \bm{e}_1} \taylortrunsc{s}{\,\bm{x}_0} [F_2] (\bm{x}_0) - D^{\bm{i} + \bm{e}_2} \taylortrunsc{s}{\,\bm{x}_0} [F_1] (\bm{x}_0)
            \end{pmatrix},
    \end{equation*}
    
Property \ref{it:curl-Taylor action} follows from Proposition~\ref{prop:multi-index derivative}-\ref{it:derivative scalar Taylor} applied to each component of the vector field $\bm{F}$ as, given $\lvert\bm{i}\rvert\leq s-1$, $\lvert\bm{i} + \bm{e}_j\rvert \leq s$ so then:
    %(b) Fix $\bm{i}\in\mathbb N_0^3$ such that $\lvert\bm{i}\rvert\leq s-1$. Then, $\lvert\bm{i} + \bm{e}_j\rvert \leq s$ for all $j\in\{1, 2, 3\}$, and, applying \eqref{eq:derivative values-scalar} to each component of $\bm{F}$, we have
    \begin{equation*}
            D^{\bm{i}} \curl \taylortrun{s}{\,\,\bm{x}_0} [\bm{F}] (\bm{x}_0)
            =\ \begin{pmatrix}
                D^{\bm{i} + \bm{e}_2} F_3 (\bm{x}_0) - D^{\bm{i} + \bm{e}_3} F_2 (\bm{x}_0) \\
                D^{\bm{i} + \bm{e}_3} F_1 (\bm{x}_0) - D^{\bm{i} + \bm{e}_1} F_3 (\bm{x}_0) \\
                D^{\bm{i} + \bm{e}_1} F_2 (\bm{x}_0) - D^{\bm{i} + \bm{e}_2} F_1 (\bm{x}_0)
            \end{pmatrix}
            = D^{\bm{i}} \curl \bm{F} (\bm{x}_0).
    \end{equation*}
%
    %\noindent
    Since
    \begin{equation*}
            D^{\bm{i}} \diver \taylortrun{s}{\,\,\bm{x}_0} [\bm{F}](\bm{x}_0) =\ D^{\bm{i} + \bm{e}_1} \taylortrunsc{s}{\, \bm{x}_0} [F_1] (\bm{x}_0) + D^{\bm{i} + \bm{e}_2} \taylortrunsc{s}{\, \bm{x}_0} [F_2] (\bm{x}_0) + D^{\bm{i} + \bm{e}_3} \taylortrunsc{s}{\, \bm{x}_0} [F_3] (\bm{x}_0) ,
    \end{equation*}
    
    Property \ref{it:divergence-Taylor action} also follows from Proposition~\ref{prop:multi-index derivative}-\ref{it:derivative scalar Taylor} applied to each component of the vector field $\bm{F}$ as, given $\lvert\bm{i}\rvert\leq s-1$, $\lvert\bm{i} + \bm{e}_j\rvert \leq s$, so then:
    %(c) Fix $\bm{i}\in\mathbb N_0^3$ such that $\lvert\bm{i}\rvert\leq s-1$. Then, $\lvert\bm{i} + \bm{e}_j\rvert \leq s$, $j=1, 2, 3$, and, applying \eqref{eq:derivative values-scalar} to each component of $\bm{F}$, we have
    \begin{equation*}
            D^{\bm{i}} \diver \taylortrun{s}{\,\,\bm{x}_0} [\bm{F}](\bm{x}_0)
            =\ D^{\bm{i} + \bm{e}_1} F_1(\bm{x}_0) + D^{\bm{i} + \bm{e}_2} F_2(\bm{x}_0) + D^{\bm{i} + \bm{e}_3} F_3(\bm{x}_0) = D^{\bm{i}} \diver \bm{F} (\bm{x}_0).
    \end{equation*}
\end{proof}

\begin{proposition}\label{prop:opertors+taylor w/ coefficients}
    Let $U\subseteq \mathbb R^3$ be an open set, $\bm{x}_0\in U$, and $s\in\mathbb N$.
    If $g\in C^s(U; \mathbb C)$ and $\bm{F}\in  C^s(U; \mathbb C^3)$, then %\label{it:gradient-Taylor action}
        \begin{equation*}
            D^{\bm{i}} \bigl(\diver (g \taylortrun{s}{\ \bm{x}_0} [\bm{F}]) \bigr) (\bm{x}_0) = D^{\bm{i}} \bigl(\diver (g \bm{F}) \bigr) (\bm{x}_0) \qquad\qquad \forall \bm{i}\in\mathbb N_0^3, \lvert\bm{i}\rvert\leq s-1.
        \end{equation*}
\end{proposition}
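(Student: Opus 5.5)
Expand $\diver(g\bm F)$ via the product rule and treat the two resulting terms separately. I will compare $\diver(g\,\taylortrun{s}{\ \bm{x}_0}[\bm F])$ and $\diver(g\bm F)$ term by term. Write
\begin{equation*}
\diver(g\bm{F}) = \grad g\cdot \bm{F} + g\,\diver\bm{F},
\end{equation*}
and similarly for $\taylortrun{s}{\ \bm{x}_0}[\bm F]$, which is a polynomial and hence smooth. For $\lvert\bm{i}\rvert\leq s-1$, I apply $D^{\bm{i}}$ to each term, evaluate at $\bm{x}_0$, and expand with the Leibniz rule.

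For the first term, I get $\sum_{\bm m\le\bm i}\binom{\bm i}{\bm m}\sum_{l} D^{\bm m}\partial_l g(\bm x_0)\, D^{\bm i-\bm m}\taylortrunsc{s}{\,\bm{x}_0}[F_l](\bm x_0)$. Since $\lvert\bm i-\bm m\rvert\le s-1\le s$, Proposition~\ref{prop:multi-index derivative}-\ref{it:derivative scalar Taylor} replaces each $D^{\bm i-\bm m}\taylortrunsc{s}{\,\bm{x}_0}[F_l](\bm x_0)$ by $D^{\bm i-\bm m}F_l(\bm x_0)$. The derivatives of $g$ that appear have order at most $\lvert\bm i\rvert+1\le s$, so $g\in C^s$ suffices.

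For the second term, the Leibniz rule gives $\sum_{\bm m\le\bm i}\binom{\bm i}{\bm m}D^{\bm m}g(\bm x_0)\,D^{\bm i-\bm m}\diver\taylortrun{s}{\ \bm{x}_0}[\bm F](\bm x_0)$. Since $\lvert\bm i-\bm m\rvert\le s-1$, Proposition~\ref{prop:divergence+taylor}-\ref{it:divergence-Taylor action} replaces $D^{\bm i-\bm m}\diver\taylortrun{s}{\ \bm{x}_0}[\bm F](\bm x_0)$ by $D^{\bm i-\bm m}\diver\bm F(\bm x_0)$. These derivatives of $\bm F$ have order at most $s$, which is consistent with $\bm F\in C^s$.

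Adding the two terms and reversing the Leibniz expansion gives $D^{\bm i}\bigl(\diver(g\bm F)\bigr)(\bm x_0)$. The only delicate point is the regularity bookkeeping: the Leibniz rule applied to $\diver(g\bm F)$ requires $g\bm F\in C^{s}$ with all derivatives of total order at most $\lvert\bm i\rvert+1\le s$, and this holds. No other obstacle arises.
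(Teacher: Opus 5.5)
Your proof is correct and follows essentially the same route as the paper: split $\diver(g\,\cdot)$ by the product rule into $\grad g\cdot(\cdot)+g\,\diver(\cdot)$, expand $D^{\bm i}$ at $\bm x_0$ with the Leibniz rule, and replace the derivatives of the Taylor polynomial at $\bm x_0$ (all of order at most $s$) by those of $\bm F$. Invoking Proposition~\ref{prop:divergence+taylor}-\ref{it:divergence-Taylor action} for the $g\,\diver$ term, together with your regularity bookkeeping, makes explicit what the paper's one-line ``simple fact'' leaves implicit.
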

\begin{proof}
    From the simple fact that for smooth scalar functions $f_1$ and $f_2$
     \begin{equation*}%\label{eq:derivative B}
        \sum_{\substack{\bm{m}\in \mathbb N_0^3,\\ \bm{m}\leq\bm{i}}} \binom{\bm{i}}{\bm{m}} D^{\bm{m}} f_1(\bm{x}_T) D^{\bm{i}-\bm{m}} \taylortrunsc{p}{\ \bm{x}_T} [f_2] (\bm{x}_T) 
        =\sum_{\substack{\bm{m}\in \mathbb N_0^3,\\ \bm{m}\leq\bm{i}}} \binom{\bm{i}}{\bm{m}} D^{\bm{m}} f_1(\bm{x}_T) D^{\bm{i}-\bm{m}} f_2 (\bm{x}_T),
    \end{equation*}
    we get 
    \begin{equation*}%\label{eq:derivative divergence product}
        \begin{split}
            D^{\bm{i}} \bigl(\diver (g \taylortrun{s}{\ \bm{x}_0} [\bm{F}]) \bigr) (\bm{x}_0) =&\ D^{\bm{i}} \bigl(\grad g \cdot \taylortrun{s}{\ \bm{x}_0} [\bm{F}] \bigr) (\bm{x}_0) + D^{\bm{i}}\bigl(g \diver \taylortrun{s}{\ \bm{x}_0} [\bm{F}] \bigr) (\bm{x}_0) \\
            =&\ D^{\bm{i}} \bigl(\grad g \cdot \bm{F} \bigr) (\bm{x}_0) + D^{\bm{i}}\bigl(g \diver \bm{F} \bigr) (\bm{x}_0) = D^{\bm{i}} \bigl(\diver (g\bm{F})\bigr) (\bm{x}_0).
        \end{split}
    \end{equation*}
\end{proof}

\section{Curl and divergence-free bases}
\label{sec:CDbases}
%ADD PROOF THAT THESE ARE BASES

%\IF{I believe this appendix section can be removed, since the proof that these are bases is explicitly shown in \cite{Imbert-Gerard_2025-3}. Otherwise, we can state the results as follows and mention the paper as reference for the proof.}

{
The following theorems establish explicit bases for the irrotational and solenoidal spaces $\homcurlfreeall{k}$ and $\homdivfreeall{k}$, respectively. 
For a complete proof of these results, see Lemmas~2 and 3 of \cite{Imbert-Gerard_2025-3}.
\begin{theorem}[Basis for $\homcurlfreeall{k}$]
    Given $k\geq 1$, a basis for $\homcurlfreeall{k}$ is given by
    \begin{equation*}
        \left\{ \bm{\Phi}_{k, \bm{i}} ,{\bm{i}\in\mathbb N_0^3, \lvert\bm{i}\rvert = k+1} \right\},
    \end{equation*}
    where $\bm{\Phi}_{k, \bm{i}}$ are the curl-free functions defined in Definition~\ref{def:curl-free and divergence-free functions}.
\end{theorem}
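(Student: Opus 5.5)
Each $\bm{\Phi}_{k,\bm{i}}$ is a gradient: writing $\bm y=\bm X-\bm x_0$, one has $\bm{\Phi}_{k,\bm{i}}=\grad(\bm y^{\bm i})$ with $\lvert\bm i\rvert=k+1$. The plan is to transfer the question to scalar homogeneous polynomials of degree $k+1$ through the gradient map, and to prove three facts: every $\bm{\Phi}_{k,\bm{i}}$ lies in $\homcurlfreeall{k}$, the family is linearly independent, and its cardinality equals $\dim\homcurlfreeall{k}$ from Table~\ref{tab:all spaces dimensions}.

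\textbf{Membership.} Since $\bm{\Phi}_{k,\bm{i}}=\grad(\bm y^{\bm i})$, it is a homogeneous vector field of degree $k$, and $\curl\grad=\bm 0$. Hence $\bm{\Phi}_{k,\bm{i}}\in\homcurlfreeall{k}$.

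\textbf{Linear independence.} Suppose $\sum c_{\bm i}\bm{\Phi}_{k,\bm i}=\bm 0$. Then $\grad q=\bm 0$ for $q=\sum c_{\bm i}\bm y^{\bm i}\in\hompol{k+1}$, so $q$ is constant. A homogeneous polynomial of degree $k+1\ge 2$ that is constant must be zero, so $q=0$. The monomials $\bm y^{\bm i}$ are linearly independent, hence every $c_{\bm i}=0$. Equivalently, $\grad$ is injective on $\hompol{k+1}$, as the leftmost map of the homogeneous complex \eqref{eq:De Rham complex}.

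\textbf{Spanning.} Count the family: there are $\binom{k+3}{2}=\frac12(k+2)(k+3)$ multi-indices with $\lvert\bm i\rvert=k+1$, which is exactly $\dim\homcurlfreeall{k}$, so independence already gives a basis. To avoid relying on the table, one can instead use exactness of \eqref{eq:De Rham complex} at $(\hompol{k})^3$, i.e.\ the case "$k+2=k$" in the sequence's indexing, $\ker\curl=\grad\hompol{k+1}$. Then every element of $\homcurlfreeall{k}$ is $\grad q$ with $q=\sum c_{\bm i}\bm y^{\bm i}$, which expands as $\sum c_{\bm i}\bm{\Phi}_{k,\bm i}$.

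The only substantive input is this exactness, or the dimension count, which is the one place where something is imported. If an independent check is wanted, construct the potential $q(\bm y)=\int_0^1 \bm F(t\bm y)\cdot\bm y\,dt$ of a curl-free homogeneous $\bm F$ (Poincaré lemma on the star-shaped domain $\mathbb R^3$). Homogeneity of degree $k$ gives $q=\frac{1}{k+1}\bm F\cdot\bm y$, a polynomial in $\hompol{k+1}$, and $\grad q=\bm F$ by the usual Euler-identity computation.
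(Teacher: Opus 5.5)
Your proof is correct and complete. The paper does not give an argument of its own here; it defers to Lemmas~2 and~3 of \cite{Imbert-Gerard_2025-3}. Your route is the natural one and matches how the paper uses the homogeneous de Rham sequence elsewhere, for instance when it writes curl-free homogeneous fields as gradients in the lemma characterizing $\homharm{k}$. That route is: identify $\bm{\Phi}_{k,\bm{i}}=\grad(\bm{y}^{\bm{i}})$, get independence from injectivity of $\grad$ on $\hompol{k+1}$, then show that every curl-free field is such a gradient.

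Of your spanning arguments, the explicit potential $q=\frac{1}{k+1}\bm{F}\cdot\bm{y}$ is the one worth keeping. The dimension count relies on Table~\ref{tab:all spaces dimensions}. Once independence is known, its entry $\frac12(k+2)(k+3)=\dim\hompol{k+1}$ for $\homcurlfreeall{k}$ is exactly the spanning claim, so that route imports the content you are trying to prove. The shifted-index appeal to \eqref{eq:De Rham complex} would, for $k=1$, need the displayed sequence at a negative index. The Euler computation has neither problem. Curl-freeness gives $\partial_jF_l=\partial_lF_j$, so $\partial_j(\bm F\cdot\bm y)=F_j+\sum_l y_l\,\partial_l F_j=(k+1)F_j$. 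This closes spanning for every $k\geq 1$ with no imported input.
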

\begin{theorem}[Basis for $\homdivfreeall{k}$]
    Given $k\geq 1$, a basis for $\homdivfreeall{k}$ is given by
    \begin{equation*}
    \begin{split}
        \cup_{{\bm{i}\in\mathbb N_0^3, \lvert\bm{i}
        \rvert = k}}\Big\{ &
        \left\{\bm{\Psi}_{k,1, \bm{i}} ,%{\bm{i}\in\mathbb N_0^3, \lvert\bm{i}\rvert = k},
        i_1=0\right\}
        \cup
        \left\{\bm{\Psi}_{k,2, \bm{i}} ,
        %{\bm{i}\in\mathbb N_0^3, \lvert\bm{i}\rvert = k}, 
        i_2=0\right\}
        \cup
        \left\{\bm{\Psi}_{k,3, \bm{i}} ,
        %{\bm{i}\in\mathbb N_0^3, \lvert\bm{i}\rvert = k}, 
        i_3=0\right\}
        \\&
        \cup
        \left\{\bm{\Psi}_{k,4, \bm{i}} ,
        %{\bm{i}\in\mathbb N_0^3, \lvert\bm{i}\rvert = k}, 
        i_1\geq 1\right\}
        \cup
        \left\{\bm{\Psi}_{k,5, \bm{i}} ,
        %{\bm{i}\in\mathbb N_0^3, \lvert\bm{i}\rvert = k}, 
        i_2\geq 1\right\}
        \Big\},
    \end{split}
    \end{equation*}
    where $\bm{\Psi}_{k, \bm{i}}$ are the divergence-free functions defined in Definition~\ref{def:curl-free and divergence-free functions}.
\end{theorem}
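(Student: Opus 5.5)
Since this is cited from Lemmas 2 and 3 of \cite{Imbert-Gerard_2025-3}, my plan is to reprove it directly rather than only refer to it. The argument has two parts: first show that every listed element lies in $\homdivfreeall{k}$, then show that the list is linearly independent and has exactly $(k+1)(k+3)$ elements, which is $\dim\homdivfreeall{k}$ by Table~\ref{tab:all spaces dimensions}.

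\emph{Membership.} Write $\bm y=\bm X-\bm x_0$. For $\bm{\Psi}_{k,1,\bm i}$ with $i_1=0$ the divergence is $\partial_{x_1}\bm y^{\bm i}=0$, and the cases $\bm\Psi_{k,2,\bm i}$ with $i_2=0$ and $\bm\Psi_{k,3,\bm i}$ with $i_3=0$ are identical. For $\bm\Psi_{k,4,\bm i}$ with $i_1\ge1$,
\[
(i_3+1)\,i_1\,\bm y^{\bm i-\bm e_1}-i_1(i_3+1)\,\bm y^{\bm i-\bm e_1}=0,
\]
and $\bm\Psi_{k,5,\bm i}$ is handled the same way.

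\emph{Counting.} There are $k+1$ multi-indices of the form $(0,i_2,i_3)$, which gives $3(k+1)$ elements of types 1--3. There are $\tfrac{k(k+1)}2$ multi-indices with $i_1\ge1$, and the same number with $i_2\ge1$. The total is $3(k+1)+k(k+1)=(k+1)(k+3)$.

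\emph{Independence.} Suppose a linear combination vanishes:
\[
\sum a_{\bm i}\bm\Psi_{k,1,\bm i}+\sum b_{\bm i}\bm\Psi_{k,2,\bm i}+\sum c_{\bm i}\bm\Psi_{k,3,\bm i}+\sum d_{\bm i}\bm\Psi_{k,4,\bm i}+\sum e_{\bm i}\bm\Psi_{k,5,\bm i}=\bm 0.
\]
I will read off the three components separately.
\begin{itemize}
\item The first component is $\sum a_{\bm i}\bm y^{\bm i}+\sum d_{\bm i}(i_3+1)\bm y^{\bm i}$. The $a$-terms have $i_1=0$ and the $d$-terms have $i_1\ge1$, so their monomials are distinct. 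Hence all $a_{\bm i}$ and $d_{\bm i}$ vanish.
\item The second component is $\sum b_{\bm i}\bm y^{\bm i}+\sum e_{\bm i}(i_3+1)\bm y^{\bm i}$. The $b$-terms have $i_2=0$ and the $e$-terms have $i_2\ge1$, so by the same argument all $b_{\bm i}$ and $e_{\bm i}$ vanish.
\item With these coefficients zero, the third component reduces to $\sum c_{\bm i}\bm y^{\bm i}$ over distinct monomials. This forces every $c_{\bm i}=0$.
\end{itemize}
So the family is linearly independent. Being contained in $\homdivfreeall{k}$ and having $\dim\homdivfreeall{k}$ elements, it is a basis.

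The main point to secure is the value $\dim\homdivfreeall{k}=(k+1)(k+3)$ itself, rather than simply quoting the table. It follows from surjectivity of $\diver:(\hompol{k})^3\to\hompol{k-1}$, which is part of the exactness of \eqref{eq:De Rham complex}:
\[
\tfrac32(k+1)(k+2)-\tfrac12k(k+1)=(k+1)(k+3).
\]
The remaining steps are routine bookkeeping.
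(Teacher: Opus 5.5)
Your proof is correct and self-contained. The divergence computations give membership. The independence argument works because the first component involves only types 1 and 4 (disjoint monomials, $i_1=0$ versus $i_1\geq 1$, with nonzero factors $i_3+1$), the second involves only types 2 and 5, and the third then isolates type 3. Finally, the count $3(k+1)+k(k+1)=(k+1)(k+3)$ matches $\dim\homdivfreeall{k}$.

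The paper gives no argument of its own here and defers to Lemmas 2 and 3 of \cite{Imbert-Gerard_2025-3}, which it describes as treating the divergence and curl on full polynomial spaces. An argument in that spirit would characterize $\homdivfreeall{k}$ as $\curl\bigl[(\hompol{k+1})^3\bigr]$ via exactness of the de Rham sequence at $(\hompol{k})^3$. It would then recognize the listed fields as explicit curls of monomial fields. For instance, $\bm{\Psi}_{k,5,\bm{i}}$ is the curl of the field with first component $\bm{y}^{\bm{i}+\bm{e}_3}$ and the others zero. Likewise, $\bm{\Psi}_{k,4,\bm{i}}$ is minus the curl of the field with second component $\bm{y}^{\bm{i}+\bm{e}_3}$ and the others zero. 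That route yields curl preimages and information on the range of the curl, which is closer to how these fields are used later in the paper (e.g.\ in the bijective curl theorem).

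Your route needs no exactness except at the last stage, and even that is elementary. The field with first component $\bm{y}^{\bm{j}+\bm{e}_1}/(j_1+1)$ and the others zero has divergence $\bm{y}^{\bm{j}}$, so $\diver\colon(\hompol{k})^3\to\hompol{k-1}$ is onto. Rank--nullity then gives $\dim\homdivfreeall{k}=(k+1)(k+3)$ without appealing to the dimension table or to the de Rham complex.
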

}

\bibliographystyle{siam}
\bibliography{refs-Maxwell}

\end{document}